\documentclass[12pt]{amsart}

\setlength{\textheight}{23cm} \setlength{\textwidth}{16cm}

\setlength{\topmargin}{-0.8cm}

\setlength{\parskip}{0.3\baselineskip} \hoffset=-1.4cm

\newtheorem{theorem}{Theorem}[section]

\newtheorem{proposition}[theorem]{Proposition}

\newtheorem{corollary}[theorem]{Corollary}

\newtheorem{remark}[theorem]{Remark}

\newtheorem{lemma}[theorem]{Lemma}

\newtheorem{example}[theorem]{Example}

\newtheorem{conjecture}[theorem]{Conjecture}
\newcommand{\lra}{\longrightarrow}

\newcommand{\Cliff}{\operatorname{Cliff}}
\newcommand{\Ext}{\operatorname{Ext}}
\newcommand{\Hom}{\operatorname{Hom}}
\newcommand{\Grass}{\operatorname{Grass}}
\newcommand{\gr}{\operatorname{gr}}

\usepackage{amssymb}

\numberwithin{equation}{section}

\begin{document}

\title{On generated coherent systems and a conjecture of D. C. Butler}


\date{\today}
\keywords{Algebraic curve, coherent system, Butler's Conjecture}
\subjclass[2010]{Primary: 14H60}
\thanks{All authors are members of the research group VBAC (Vector Bundles on Algebraic Curves). The authors were also participants in the programme ``Moduli Spaces'' held at the Newton Institute in 2011, where work on this paper was begun. The first author would like to thank ICTP for hospitality during the preparation of the paper and CONACYT for support. }

\author{L. Brambila-Paz}

\address{CIMAT, Mineral de Valenciana S/N, Apdo. Postal 402, C.P. 36240. Guanajuato, Gto,
M\'exico}
\email{lebp@cimat.mx}

\author{O. Mata-Gutierrez}

\address{Departamento de Matem\'aticas, CUCEI, Universidad de Guadalajara. Av. Revoluci\'on 1500. C.P. 44430, Guadalajara, Jalisco,
M\'exico} \email{osbaldo.mata@academico.udg.mx, osbaldo@cimat.mx}

\author{P. E. Newstead}

\address{Department of Mathematical Sciences\\
University of Liverpool\\
Peach Street, Liverpool, L69 7ZL}

\email{newstead@liverpool.ac.uk}

\author{Angela Ortega}

\address{Institut f\"ur Mathematik\\
        Universit\"at Humboldt zu Berlin\\
Unter den Linden 6, D-10099 Berlin,Germany}

\email{ortega@math.hu-berlin.de}

\begin{abstract}   Let $(E,V)$ be a general generated coherent system of type $(n,d,n+m)$ on a general non-singular irreducible complex projective curve. A conjecture of D. C. Butler relates the semistability of $E$ to the semistability of the kernel of the evaluation map $V\otimes \mathcal{O}_X\to E$. The aim of this paper is to obtain results on the existence of generated coherent systems and use them to prove Butler's Conjecture in some cases. The strongest results are obtained for type $(2,d,4)$, which is the first previously unknown case.
\end{abstract}
\maketitle

\section{Introduction}\label{intro}

Let $X$  be a non-singular irreducible complex projective curve of genus $g$. Morphisms from $X$ to a Grassmannian are of interest in studying the geometry of $X$ and, in particular, its syzygies (see \cite{an,f} for surveys of results on syzygies). In more detail, let $\varphi:X\to\Grass(m,n+m)$ be a morphism from $X$ to the Grassmannian of $m$-dimensional subspaces of $\mathbb{C}^{n+m}$ and let $0\to S\to\mathcal{O}_G^{n+m}\to Q\to0$ be the tautological sequence on $\Grass(m,n+m)$. Suppose that $\varphi$ is non-degenerate in the sense that neither $\varphi^*(S)$ nor $\varphi^*(Q)$ admits $\mathcal{O}_X$ as a direct factor. The pullback of the tautological sequence then gives rise to an exact sequence which we can write as
\begin{equation}\label{eq0}
0\lra M_{V,E} \lra V\otimes \mathcal{O}_X\stackrel{ev}{\lra} E\lra 0.
\end{equation}
The non-degeneracy condition implies that $h^0(M_{V,E})=0$ (and also $h^0(E^*)=0$), so that $V$ can be regarded as a linear subspace of $H^0(E)$ and $(E,V)$ is a coherent system on $X$ of type $(n,d,n+m)$ for some $d$. Moreover $(E,V)$ is generated in the sense that the evaluation map $V\otimes \mathcal{O}_X\stackrel{ev}{\lra} E$ is surjective. Conversely, any such sequence \eqref{eq0} gives rise to a non-degenerate morphism $\varphi: X\to\Grass(m,n+m)$ which is uniquely determined up to the action of ${\rm GL}(n+m)$ on $\Grass(m,n+m)$. Thus the study of such morphisms is reduced to the study of generated coherent systems $(E,V)$ for which $h^0(E^*)=0$. References for work involving this correspondence include \cite{ball,balr,fo,gmn,mon2,T3}.

 In this paper, we study generated coherent systems $(E,V)$ of type $(n,d,n+m)$, in particular those that are $\alpha $-stable for small $\alpha >0$. Recall more generally that a coherent system of type $(n,d,k)$ on $X$ is a pair $(E,V)$, where $E$ is a vector bundle on $X$ of rank $n$ and degree $d$ and $V\subset H^0(X,E)$ is a linear subspace of dimension $k$. Denote by $G(\alpha;n,d,k)$ the moduli space of $\alpha$-stable coherent
systems of type $(n,d,k)$ (for the definition and properties of $\alpha$-stability, see Section \ref{seccs}). In particular, we denote the moduli space by $G_{0}(n,d,k)$  when $\alpha >0$ is small and by $G_{L}(n,d,k)$ when  $\alpha >0$  is large. We define
$$S(\alpha;n,d,n+m):=\{ (E,V)\in G(\alpha;n,d,n+m)\,|\, V\otimes \mathcal{O}_X\stackrel{ev}{\lra} E\mbox{ is surjective}\}.$$
We write also $S_0(n,d,n+m)$ for $S(\alpha;n,d,n+m)$ when $\alpha$ is small, $M(n,d)$ for the moduli space of stable bundles of rank $n$ and degree $d$, and $B(n,d,k)$  for the Brill-Noether locus consisting of those $E\in M(n,d)$ for which $h^0(E)\ge k$.

 Our first aim is to study the non-emptiness and geometry (e.g. irreducibility, dimension, smoothness) of $S_0(n,d,n+m)$. While there are many results in the literature concerning the non-emptiness of $B(n,d,n+m)$ (and hence $G_0(n,d,n+m)$)(see, for example \cite{mon1} and, for $n=2$, \cite{cf1,cf2}) and indeed for $G(\alpha;n,d,n+m)$ for any $\alpha>0$ \cite{T2,mon}, much less is known about $S_0(n,d,n+m)$ except in the case $d\le2n$ \cite[Theorem 4.4(c)]{bgmmn2}.

In order to state our first general result for small degree, we recall that $\Cliff_n(X)$ is the rank-$n$ Clifford index of $X$ (see \eqref{eqcliff0}).

\begin{theorem}[\bf Theorem \ref{beta}] \
\begin{enumerate}
\item[(i)] If $d<\min\{2m,ng+m\}$, then $S_0(n,d,n+m)=\emptyset$.
\item[(ii)] If $X$ is a general curve and  $d< n+\frac{ng}{n+1}$ or $d< m+\frac{mg}{m+1}$, then $S_0(n,d,n+m)=\emptyset$ and $S_0(m,d,n+m)=\emptyset$. This holds in particular if $d<g+s$ and $g\le s$, where $s=\max\{n,m\}$.
\item[(iii)] If $X$ is a curve of genus $g\ge4$, $m\ge n$ and
$$d\le\min\{2m-1+n\Cliff_n(X),n(g-1)+m-n\},$$
then $S_0(n,d,n+m)=\emptyset$.
\end{enumerate}
\end{theorem}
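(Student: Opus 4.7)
The common setup, used throughout, is that any $(E,V)\in S_0(n,d,n+m)$ has $E$ semistable of rank $n$ and degree $d$ (by standard properties of $\alpha$-stability at small $\alpha>0$: taking any subbundle $F\subsetneq E$ with $V':=V\cap H^0(F)$ and letting $\alpha\to 0^+$ gives $\mu(F)\le\mu(E)$), $(E,V)$ generated, and $\dim V=n+m\le h^0(E)$. Dualising \eqref{eq0} and using the non-degeneracy $h^0(E^*)=0$ produces a generated coherent system $(M_{V,E}^*,V^*)$ of type $(m,d,n+m)$ whose small-$\alpha$ stability is equivalent to that of $(E,V)$. These two observations drive the whole argument.

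For (i), the plan is to split on whether $d\le 2n(g-1)$. In the first range, Clifford's theorem for semistable bundles gives $h^0(E)\le n+d/2$, so $h^0(E)\ge n+m$ forces $d\ge 2m$, contradicting $d<2m$. In the second range, $\mu(E)>2g-2$ yields $h^1(E)=0$ (apply Serre duality to the semistable bundle $E^*\otimes K$, which has negative slope), and Riemann--Roch gives $h^0(E)=d-n(g-1)\ge n+m$, forcing $d\ge ng+m$ and contradicting $d<ng+m$. The case $d<0$ is immediate as semistable bundles of negative degree have no sections.

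For (ii), the key ingredient I would import is the sharp Brill--Noether non-existence statement on a general curve: $B(n,d,n+1)=\emptyset$ whenever $d<n+ng/(n+1)$ (from the higher-rank Brill--Noether theory of Teixidor and Mercat, and the references cited in the introduction). Since $h^0(E)\ge n+m\ge n+1$, this gives $S_0(n,d,n+m)=\emptyset$ under the first bound, and the dualisation above transfers the statement to $S_0(m,d,n+m)=\emptyset$; interchanging $n$ and $m$ handles the second bound. For the ``in particular'' clause, the identity $s+sg/(s+1)=g+s-g/(s+1)$, combined with $g\le s$ and integrality of $d$, makes $d<g+s$ equivalent to $d<s+sg/(s+1)$, applied with $s=\max\{n,m\}$.

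For (iii), the plan is to bring $E$ into the scope of $\Cliff_n(X)$. The hypothesis $m\ge n$ gives $h^0(E)\ge n+m\ge 2n$, while $d\le n(g-1)+m-n$ combined with Riemann--Roch gives
\[
h^1(E)=h^0(E)-d+n(g-1)\ge n+m-d+n(g-1)\ge 2n.
\]
Hence $E$ is admissible for $\Cliff_n(X)$ (see \eqref{eqcliff0}), yielding
\[
\Cliff_n(X)\le \mu(E)-\frac{2(h^0(E)-n)}{n}\le \frac{d-2m}{n},
\]
i.e.\ $d\ge 2m+n\Cliff_n(X)$, contradicting $d\le 2m-1+n\Cliff_n(X)$. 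The main obstacle I anticipate is (ii): everything there hinges on the precise threshold $n+ng/(n+1)$ being the correct non-existence boundary for rank-$n$ semistable bundles with $n+1$ sections on a general curve, which must be quoted cleanly from the literature rather than rederived; the remaining arguments are routine Clifford/Riemann--Roch bookkeeping plus the dualisation trick.
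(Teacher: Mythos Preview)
Parts (i) and (iii) are essentially the paper's arguments. One minor imprecision in (iii): the definition \eqref{eqcliff0} of $\Cliff_n(X)$ requires $d_E\le n(g-1)$, which need not hold under your hypotheses; the paper (Lemma~\ref{lcliff}) observes that either $E$ or $K_X\otimes E^*$ contributes to $\Cliff_n(X)$, and since $\gamma(E)=\gamma(K_X\otimes E^*)$ by Riemann--Roch the inequality you derive is still valid. Your computation $h^1(E)\ge 2n$ is exactly the missing ingredient, so this is easy to patch.

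Part (ii) has a genuine gap. In your setup you assert that the small-$\alpha$ stability of $D(E,V)=(M_{V,E}^*,V^*)$ is \emph{equivalent} to that of $(E,V)$. This is false in general and is precisely the content of Butler's Conjecture, the subject of the paper; using it here is circular. Your plan to transfer emptiness between $S_0(n,d,n+m)$ and $S_0(m,d,n+m)$ by dualisation therefore collapses, and with it the deduction of both emptiness statements under each of the two inequalities. A secondary issue is that the statement ``$B(n,d,n+1)=\emptyset$ for $d<n+ng/(n+1)$'' concerns stable bundles, whereas you only know $E$ is semistable; nor is such a higher-rank non-existence result cleanly available in the form you need.

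The paper avoids both problems by reducing to classical rank-one Brill--Noether on $\det E$ (Lemma~\ref{rembounds}), using only that $(E,V)$ is generated with $h^0(E^*)=0$ and no stability of either $E$ or $M_{V,E}^*$. Choosing $n+1$ generating sections yields $0\to(\det E)^*\to\mathcal{O}_X^{n+1}\to E\to0$, whose dual gives $h^0(\det E)\ge n+1$; the standard sequence $0\to\mathcal{O}_X^{n-1}\to E\to\det E\to0$ gives $h^0(\det E)\ge h^0(E)-(n-1)\ge m+1$. Rank-one Brill--Noether on a general curve then forces $d\ge n+ng/(n+1)$ and $d\ge m+mg/(m+1)$ simultaneously, so both $S_0(n,d,n+m)$ and (by the same lemma with the roles of $n$ and $m$ interchanged) $S_0(m,d,n+m)$ are empty, with no appeal to dual-span stability. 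Your treatment of the ``in particular'' clause is correct.
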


For $d> n(2g-1)$, it is well known that $S_0(n,d,d+n(1-g))\ne \emptyset $.  We use the Picard sheaf $\mathcal{W}$ to show that, if $d\ge ng+m$ with $m\ge1$, then $S_0(n,d,n+m)\ne\emptyset$. In addition, we describe in Theorem \ref{props0} some geometrical properties of $S_0(n,d,n+m)$ in this case and prove that, if $m\ge ng$, then  $S_0(n,d,n+m)\ne\emptyset$ if and only if $d\ge ng+m$ (Corollary \ref{cor3.4}).

For any $(E,V)\in S_0(n,d,n+m)$, we have an exact sequence \eqref{eq0}.
By \cite[Lemma 2.9]{bgmmn}, $h^0(E^*)=0$, so, dualising \eqref{eq0}, we obtain a generated coherent system $(M^*_{V,E}, V^*)$, of type $(m,d,n+m)$. This is called the {\it dual span} of $(E,V)$ and denoted by $D(E,V)$. Note that the pullback of the tangent bundle of $\Grass(m,n+m)$ by the morphism $\varphi$ defined by \eqref{eq0}  is isomorphic to $E\otimes M^*_{V,E}$. It follows that,
if $E$ and $M^*_{V,E}$ are both semistable, then so is this pullback. The same bundle is given by the pullback of the tangent bundle of $\Grass(n,n+m)$ by the morphism defined by $D(E,V)$.

There is a natural isomorphism between $\Grass(n,n+m)$ and $\Grass(m,n+m)$, so one can expect a relation between $S_0(n,d,n+m)$ and $S_0(m,d,n+m)$.  D. C. Butler, inspired by his result \cite[Theorem 2.1]{bu} and by the papers \cite{rv} and \cite{kn}, conjectured in \cite{but} that, for a general curve $X$  of genus $g$ and $m\geq 1$, $S_{0}(n,d,n+m)$ is dense in $G_0(n,d,n+m)$ and

\begin{conjecture}\label{conj1}   Let $X$ be a general curve of genus $g$ and $n,d,m$ positive integers. Then, for a general $(E,V)\in S_{0}(n,d,n+m)$,
$D(E,V)\in S_{0}(m,d,n+m)$. Moreover,  $S_{0}(n,d,n+m)$ and $S_{0}(m,d,n+m)$ are birational.
\end{conjecture}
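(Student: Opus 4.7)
The plan is to exploit the fact that the dual span construction is formally an involution, thereby reducing the conjecture to a single semistability statement. Since $h^0(E^*)=0$ for $(E,V)\in S_0(n,d,n+m)$ by \cite[Lemma 2.9]{bgmmn}, dualising \eqref{eq0} yields
\begin{equation*}
0\lra E^*\lra V^*\otimes \mathcal{O}_X\lra M^*_{V,E}\lra 0,
\end{equation*}
which exhibits $(M^*_{V,E},V^*)$ as a generated coherent system of type $(m,d,n+m)$ whose associated kernel bundle is $E^*$. Consequently, whenever $D(E,V)\in S_0(m,d,n+m)$, one has $D(D(E,V))\cong (E,V)$ canonically. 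Thus once $D$ is known to send a dense open subset of $S_0(n,d,n+m)$ into $S_0(m,d,n+m)$, the induced rational map is its own inverse and birationality follows automatically; the entire content of the conjecture is therefore concentrated in the first assertion.

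To establish this assertion, I would fix a general $(E,V)\in S_0(n,d,n+m)$ and verify $\alpha$-stability of $(M^*_{V,E},V^*)$ for small $\alpha>0$. By the standard characterisation of small-$\alpha$ stability this splits into (a) $M^*_{V,E}$ being semistable, and (b) the condition that for every subbundle $F\subset M^*_{V,E}$ with $\mu(F)=\mu(M^*_{V,E})$ the section ratio $\dim(V^*\cap H^0(F))/\operatorname{rank} F$ is strictly smaller than $(n+m)/m$. Statement (b) can usually be discharged by a dimension count in $S_0(n,d,n+m)$ using the genericity of $(E,V)$ in its moduli space, so the real task is (a).

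Statement (a) is Butler's original semistability conjecture and is the principal obstacle. The natural attack is by contradiction: a destabilising subbundle $F\subset M_{V,E}$ of rank $r<m$ produces, on taking the saturation of its image in $V\otimes\mathcal{O}_X$, a subspace $W\subset V$ and a generated sub-coherent-system $(F',W)$ of $(E,V)$ of rank $r'=\dim W-r$ whose defining sequence fits into a commutative diagram with \eqref{eq0}. The inequality $\mu(F)\ge \mu(M_{V,E})=-d/m$, combined with the semistability of $E$, restricts $(r,\deg F,\dim W)$ to a narrow numerical range which one aims to exclude using Brill--Noether bounds for the existence of the pair $(F',W)$ on a general curve. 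The hard part is that these Brill--Noether loci are only partially understood in higher rank, so the analysis must be carried through in full generality to isolate the critical numerical types and then discharged by hand in the cases where sharp bounds are available---most notably for type $(2,d,4)$, where $r=1$ forces $F$ to be a line subbundle and classical results on maximal line subbundles on a general curve should suffice to close the argument. With (a) and (b) in hand, birationality is then immediate from the involution property of $D$.
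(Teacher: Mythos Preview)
The statement you are attempting to prove is a \emph{conjecture}, not a theorem; the paper does not contain a proof of it, and in fact Example~\ref{exg2} shows that it fails for $g=2$ (the paper explicitly states the conjecture without Butler's original restriction $g\ge3$). So no argument along the lines you sketch can succeed in full generality, and you effectively acknowledge this when you write that statement (a) ``is Butler's original semistability conjecture and is the principal obstacle'' and that the higher-rank Brill--Noether loci needed to close the argument ``are only partially understood.'' What you have written is a strategy outline, not a proof, and the paper's contribution is precisely to carry out fragments of this strategy in specific cases rather than to resolve the conjecture.

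There is also a genuine gap in your reduction of birationality to the first assertion. Knowing that $D$ carries a dense open $U\subset S_0(n,d,n+m)$ into $S_0(m,d,n+m)$ gives an injective morphism $U\to S_0(m,d,n+m)$ whose inverse is $D$ restricted to $D(U)$, but it does \emph{not} follow that $D(U)$ is dense in $S_0(m,d,n+m)$: the target could have extra components not meeting $D(U)$. This is exactly the content of Theorem~\ref{cort} in the paper, which shows that Butler's Conjecture for $(n,d,n+m)$ is equivalent to $T(n,d,n+m)$ being dense in $S_0(n,d,n+m)$ \emph{and} $T(m,d,n+m)$ being dense in $S_0(m,d,n+m)$; both conditions are needed. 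The paper then verifies these density statements in special cases by controlling the complement $\mathcal{P}_0$ (the locus admitting subpencils) via dimension counts, and by invoking irreducibility results for $S_0$---see Theorems~\ref{mercat} and~\ref{cor3.2}. Your destabilising-subbundle analysis is in the same spirit as the paper's Lemma~\ref{lemma1} and Proposition~\ref{coropencil}, but the paper uses it only to identify obstructions, not to prove the conjecture outright.
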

Actually, Butler stated his conjecture for $g\ge3$, but we state it without this restriction. By {\it  general}, we mean that $(E,V)$ belongs to a Zariski open set which is dense in $S_0(n,d,n+m)$. We will refer to Conjecture \ref{conj1} as {\it Butler's conjecture for $(n,d,n+m)$}. Note that the conjecture for $(n,d,n+m)$ holds in a trivial sense if $S_{0}(n,d,n+m)=\emptyset$ and $S_{0}(m,d,n+m)=\emptyset$. We shall say that {\it Butler's Conjecture holds non-trivially} for $(n,d,n+m)$ if Conjecture \ref{conj1} holds and $S_0(n,d,n+m)\ne\emptyset$. Note that Conjecture \ref{conj1} is closely connected with the stability of Picard bundles (see \cite{el,li}).

To proceed further, we introduce some definitions. We write
$$
U^s(n,d,n+m):=\{(E,V)\in G_0(n,d,n+m)\,|\,(E,V) \mbox{ is $\alpha$-stable for all }\alpha>0\},
$$
$$U_g(n,d,n+m):= \{(E,V)\in U^s(n,d,n+m)\,|\,(E,V) \mbox{ is generated}\},$$
$$T(n,d,n+m):=\{(E,V)\in S_0(n,d,n+m)\,|\,D(E,V)\in S_0(m,d,n+m)\},$$
and
$${\mathcal P}_0(n,d,n+m):=\{(E,V)\in S_0(n,d,n+m)\,|\,     (E,V)\mbox{ admits a subpencil}\},$$
(for the definition of subpencil, see Section \ref{secug}).

In Lemma  \ref{genopen}, we prove that $T(n,d,n+m)$ is open in $S_0(n,d,n+m)$; hence, the formula $(E,V)\mapsto D(E,V)$ defines an isomorphism
$$D:T(n,d,n+m)\to T(m,d,n+m)$$
(Proposition \ref{propt}), so that $T(n,d,n+m)$ is the largest open subscheme of $S_0(n,d,n+m)$ for which the conclusion of Butler's Conjecture holds. The next theorem gives sufficient conditions for Butler's conjecture to be fulfilled.

\begin{theorem}[\bf Theorem \ref{cort}] \  Butler's Conjecture holds for $(n,d,n+m)$ if and only if $T(n,d,n+m)$ is dense in $S_0(n,d,n+m)$ and $T(m,d,n+m)$ is dense in $S_0(m,d,n+m)$. In particular, if both $S_0(n,d,n+m)$ and $S_0(m,d,n+m)$ are irreducible and $T(n,d,n+m)$ is non-empty, then Butler's Conjecture holds non-trivially for $(n,d,n+m)$.
\end{theorem}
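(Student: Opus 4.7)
The plan is to unwind the definitions, treating Lemma \ref{genopen} (which states that $T(n,d,n+m)$ is open in $S_0(n,d,n+m)$) and Proposition \ref{propt} (which states that $D$ induces an isomorphism $T(n,d,n+m)\to T(m,d,n+m)$, with inverse again $D$) as the two structural inputs. The theorem then becomes essentially a reformulation of Butler's Conjecture in terms of the density of the two $T$-loci.

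For the forward implication, I would assume both $T(n,d,n+m)$ and $T(m,d,n+m)$ are dense in their ambient spaces. The first assertion of Butler's Conjecture is then immediate: a general $(E,V)\in S_0(n,d,n+m)$ lies in the dense open subscheme $T(n,d,n+m)$, and by the very definition of $T$ this is the statement $D(E,V)\in S_0(m,d,n+m)$. The birationality assertion is then furnished directly by Proposition \ref{propt}: the isomorphism $D\colon T(n,d,n+m)\to T(m,d,n+m)$ is an isomorphism between dense open subschemes of the two $S_0$'s, which is exactly the content of a birational equivalence.

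For the reverse implication, I would assume Butler's Conjecture holds. The density of $T(n,d,n+m)$ in $S_0(n,d,n+m)$ falls out of the first half of the conjecture combined with Lemma \ref{genopen}: the dense open $U\subseteq S_0(n,d,n+m)$ on which $D$ lands in $S_0(m,d,n+m)$ is by definition contained in $T(n,d,n+m)$, and $T(n,d,n+m)$ is open, hence dense. For the density of $T(m,d,n+m)$, I would argue that the rational map $S_0(n,d,n+m)\dashrightarrow S_0(m,d,n+m)$ defined by $D$ on $T(n,d,n+m)$ must realise the birational equivalence claimed in the conjecture; its image is precisely $T(m,d,n+m)$, so that subscheme is dense in $S_0(m,d,n+m)$. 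The delicate point, which I expect to be the main obstacle, is to justify that $D$ genuinely implements the birational map asserted in the conjecture when the moduli spaces may be reducible. I plan to deal with this by using that $D\circ D=\mathrm{id}$ (dualising the evaluation sequence twice) together with Proposition \ref{propt} to match up irreducible components on the two sides: each component of $S_0(m,d,n+m)$ must correspond under the birational equivalence to a component of $S_0(n,d,n+m)$, whose dense open intersection with $T(n,d,n+m)$ is sent by $D$ into $T(m,d,n+m)$, forcing the latter to meet every component of $S_0(m,d,n+m)$ in an open dense subset.

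The ``in particular'' clause then follows painlessly: if $S_0(n,d,n+m)$ and $S_0(m,d,n+m)$ are both irreducible, every non-empty open subscheme is automatically dense. Since $T(n,d,n+m)$ is non-empty by hypothesis, it is dense in $S_0(n,d,n+m)$; Proposition \ref{propt} gives $T(m,d,n+m)\cong T(n,d,n+m)\neq\emptyset$, so $T(m,d,n+m)$ is a non-empty open, hence dense, subset of the irreducible $S_0(m,d,n+m)$. Applying the main equivalence yields Butler's Conjecture, and non-triviality follows from $S_0(n,d,n+m)\supseteq T(n,d,n+m)\neq\emptyset$.
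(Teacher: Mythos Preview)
Your proposal is correct and follows the same approach as the paper, which dispatches the theorem in a single line: ``This follows at once from Proposition \ref{propt} and Lemma \ref{genopen}.'' Your expanded treatment of both implications, and of the ``in particular'' clause, is exactly what the paper leaves implicit.

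One remark: your worry about whether the birational equivalence in Butler's Conjecture is \emph{a priori} realised by $D$ (leading you to the component-matching argument) is more scrupulous than the paper. The paper reads the conjecture as asserting that $D$ itself furnishes the birational map---see the sentence in the introduction declaring $T(n,d,n+m)$ to be ``the largest open subscheme of $S_0(n,d,n+m)$ for which the conclusion of Butler's Conjecture holds''. Under that reading, the reverse implication for $T(m,d,n+m)$ is immediate: the dense open domain of the birational map $D$ lands, by Proposition \ref{propt}, in $T(m,d,n+m)$, and its image is dense since $D$ is birational. Your more literal reading of the conjecture is defensible, and your component-matching workaround is sound, but you may simply adopt the paper's interpretation and drop that paragraph.
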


 Butler's Conjecture for $(1,d,m+1)$ was established for $g\ge1$ in \cite{bbn} following earlier work by many authors. In \cite[Theorem 2.1]{bu}, D. C. Butler  proved that, if $E$ is semistable of rank $n$ and degree $d\ge2ng$, then $M_{H^0(E),E}$ is semistable; moreover, if $E$ is stable and $d>2gn$, then $M_{H^0(E),E}$ is stable. For general $n$ and $d> 2ng$, this is the first step in proving the conjecture for $(n,d,d+n(1-g))$.
In  \cite[Ch. 2 Th\'{e}or\`{e}me B-1]{mer}, Mercat stated a result which completes the proof in this case, except that he does not fully consider the case where the underlying bundle is strictly semistable.

Using the above results we complete  Mercat's result  for $d>2ng$ and extend it to cover $d=2ng$. In the statement,  $\beta(n,d,k)$ is the ``expected dimension'' of $G(\alpha;n,d,k)$ (for further information, see Section \ref{seccs}). Note that, if $h^0(E)=k$ for all $E\in B(n,d,k)$, we can regard $B(n,d,k)$ as an open subset of $G_0(n,d,k)$.

\begin{theorem}[{\bf Theorem \ref{mercat}}]
Let $X$ be a curve of genus $g\ge2$ and let $m=d-ng$.
\begin{enumerate}
\item[(i)] If $d>2ng-n$, then
\begin{equation*}
S_0(n,d,n+m)=G_0(n,d,n+m)=M(n,d)
\end{equation*}
and, in particular, $S_0(n,d,n+m)$ is smooth and irreducible of dimension $\beta(n,d,n+m)$.
\item[(ii)] If $d>2ng$, then $S_0(m,d,n+m)=G_0(m,d,n+m)=B(m,d,n+m)$; moreover, Butler's Conjecture holds non-trivially and the dual span construction defines an isomorphism
\begin{equation*}
D:S_0(n,d,n+m)\lra S_0(m,d,n+m).
\end{equation*}
\item[(iii)] If $g\ge3$ and $X$ is not hyperelliptic, then $S_0(ng, 2ng,n(g+1))$ is irreducible and
\begin{equation*}
\emptyset\ne B(ng,2ng,n(g+1))\subset T(ng,2ng,n(g+1))\subset S_0(ng,2ng,n(g+1));
\end{equation*}
moreover, Butler's Conjecture holds non-trivially for $(n,2ng,n(g+1))$.
\end{enumerate}
\end{theorem}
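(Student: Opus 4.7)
For part (i), the hypothesis $d>2ng-n$ translates to $\mu(E)>2g-1$ for any bundle of rank $n$ and degree $d$. For stable $E\in M(n,d)$, semistability of $E(-p)$ combined with $\mu(E(-p))>2g-2$ yields $h^1(E(-p))=0$ via Serre duality; hence $E$ is globally generated and $h^0(E)=d-n(g-1)=n+m$ by Riemann--Roch. The map $E\mapsto (E,H^0(E))$ embeds $M(n,d)$ into $S_0(n,d,n+m)$, and conversely any $(E,V)\in G_0(n,d,n+m)$ must have $E$ stable (by small-$\alpha$ stability combined with the coincidence $\dim V=h^0(E)$) and $V=H^0(E)$. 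Hence $S_0(n,d,n+m)=G_0(n,d,n+m)=M(n,d)$; smoothness, irreducibility and the dimension formula follow from the standard theory of $M(n,d)$.

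For part (ii), with $d>2ng$ Butler's Theorem 2.1 applied to each stable $E\in M(n,d)$ gives $M_{H^0(E),E}$ stable of rank $m$, degree $-d$. Dualising \eqref{eq0} yields $F:=M^*_{H^0(E),E}$ stable of rank $m$, degree $d$, together with a generating subspace $V^*\subset H^0(F)$ of dimension $n+m$. Stability of $F$ and an elementary $\alpha$-slope comparison give $(F,V^*)\in S_0(m,d,n+m)$. To obtain the full equalities $S_0(m,d,n+m)=G_0(m,d,n+m)=B(m,d,n+m)$ I would run the dual span construction in reverse: starting from $(F,V^*)\in G_0(m,d,n+m)$ the resulting rank-$n$ bundle must be shown to be stable, which is exactly where Mercat's treatment must be completed to cover strictly semistable underlying bundles; this uses Butler's semistability statement together with the fact that $V^*$ generates $F$. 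The involution $D\circ D=\mathrm{id}$ then yields the isomorphism $D:S_0(n,d,n+m)\to S_0(m,d,n+m)$, and Theorem \ref{cort} gives non-trivial Butler.

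For part (iii) with $d=2ng$, part (i) already gives $S_0(n,2ng,n(g+1))=M(n,2ng)$, which is irreducible. The plan is to analyse the dual span $D$ on this space: Butler's Theorem 2.1 at the boundary $d=2ng$ only gives semistability of $M_{H^0(E),E}$, so I would invoke the non-hyperelliptic hypothesis (with $g\geq 3$) to upgrade to stability on a dense open subset of $M(n,2ng)$. Destabilising sub-bundles of $M_{H^0(E),E}$ correspond to special linear configurations of sections of $E$, which on a non-hyperelliptic curve cut out loci of positive codimension, so for generic $E$ the kernel is stable. This produces $B(ng,2ng,n(g+1))\ne\emptyset$, and the inclusion $B(ng,2ng,n(g+1))\subset T(ng,2ng,n(g+1))$ follows by verifying that for $F$ in this Brill--Noether locus the dual span recovers a stable rank-$n$ bundle. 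Irreducibility of $S_0(ng,2ng,n(g+1))$ then follows: $T$ is open in $S_0$ by Lemma \ref{genopen}, is isomorphic by Proposition \ref{propt} to an open subset of the irreducible $M(n,2ng)$, hence is irreducible, and a parameter count shows the complement $S_0\setminus T$ has strictly smaller dimension. Non-trivial Butler for $(n,2ng,n(g+1))$ now follows from Theorem \ref{cort}.

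\textbf{Main obstacle.} The principal difficulty lies in part (iii): sharpening Butler's semistability statement to stability at the boundary $d=2ng$ using non-hyperellipticity of $X$, and then proving irreducibility of $S_0(ng,2ng,n(g+1))$ in full rather than merely on the image of $D$. The former requires a geometric analysis of destabilising sub-bundles via linear series on $X$; the latter requires ruling out components of $S_0$ outside the closure of $T$, which is not formal and likely needs an explicit dimension count on the space of generated coherent systems.
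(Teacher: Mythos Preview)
Your part (i) is essentially the paper's argument.

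In part (ii) you are missing a key ingredient. The equality $G_0(m,d,n+m)=B(m,d,n+m)$ in the paper is obtained not by ``running the dual span in reverse'' but by invoking Mercat's upper bound \cite{mer}: for semistable $F$ of rank $m$ and degree $d$ with $1<d/m<2$ one has $h^0(F)\le m+\frac1g(d-m)=n+m$. This forces $V=H^0(F)$ for every $(F,V)\in G_0(m,d,n+m)$, after which the same strictly-semistable argument as in (i) eliminates non-stable $F$. Your proposed reverse direction does not work as stated: Butler's theorem applies to bundles of slope at least $2g$, but $\mu(F)=d/m<2$, so there is no Butler statement to invoke for $F$. The paper then cites \cite[Ch.~2 Th\'eor\`eme B-1]{mer} directly for the isomorphism $D$, and deduces $S_0=G_0$ on the $m$ side from that.

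In part (iii) your approach diverges substantially from the paper, which simply quotes existing results: non-emptiness of $B(ng,2ng,n(g+1))$ and the fact that $h^0=n(g+1)$ throughout $G_0(ng,2ng,n(g+1))$ are \cite[Theorem 5.4]{bgmmn2}; the inclusion $B(ng,2ng,n(g+1))\subset T(ng,2ng,n(g+1))$ is \cite[Proposition 2]{mer2}; and irreducibility of $S_0(ng,2ng,n(g+1))$ is \cite[Theorem 4.4]{bgmmn2}. Your own proposed route has two genuine gaps, which you correctly flag. First, you give no mechanism for the ``generic stability upgrade'' of $M_{H^0(E),E}$ at $d=2ng$; the non-hyperelliptic hypothesis does enter in \cite{mer2}, but the argument there is specific and not the vague codimension heuristic you sketch. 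Second, your irreducibility argument cannot succeed as a pure dimension count: every component of $S_0(ng,2ng,n(g+1))$ has dimension at least $\beta(ng,2ng,n(g+1))=n^2(g-1)+1$, which equals $\dim T$ (since $T\cong T(n,2ng,n(g+1))$ is open in $M(n,2ng)$), so a bound on $\dim(S_0\setminus T)$ cannot exclude further components of $S_0$ of the same dimension disjoint from $\overline{T}$.
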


The relation $T(n,d,n+m)\subset U_g(n,d,n+m)$ is established in Theorem \ref{main}. Thus, if $(E,V)\in S_0(n,d,n+m)$ is not $\alpha$-stable for all $\alpha >0$, $D(E,V)\notin  S_{0}(m,d,n+m)$. Moreover, we prove in Proposition \ref{coropencil} that, if $m\le n$ and $(E,V)\in \mathcal{P}_0(n,d,n+m)$ then $D(E,V)\notin  S_{0}(m,d,n+m)$. These results give us conditions under which Butler's Conjecture may fail.
For large enough degree, it can also be proved (Proposition \ref{dimcomp}) that, if $T(n,d,n+m)\ne\emptyset$, then it is smooth of the expected dimension $\beta(n,d,n+m)$, so that $U_g(n,d,n+m)$ and $U_g(m,d,n+m)$ have an irreducible component of dimension $\beta(n,d,k)$.

We use the above results to obtain results in the first particular case of interest, that is when $(n,d,n+m)=(2,d,4)$.  We show first that $U_g(2,d,4)=S_0(2,d,4)$ (Proposition \ref{proposition2}) and  more complete results of non-emptiness of $S_0(2,d,4)$ are given in Theorem \ref{p3.1}.  We prove several propositions concerning $\mathcal{P}_0(2,d,4)$, leading to the following theorem.

\begin{theorem}[{\bf Theorem \ref{cor3.2}}]
Let $X$ be a general curve of genus $g\ge3$. Then there exist integers $d$ such that
\begin{equation*}
\max\left\{g-r+\left\lfloor\frac{g}r\right\rfloor,g+r+3\right\}\le d\le2\left(\left\lceil\frac{2g}3\right\rceil+2\right)\mbox{ for some integer }r\ge1.
\end{equation*}
Moreover, if $d$ satisfies these inequalities, then
\begin{enumerate}
\item[(i)] $T(2,d,4)=S_0(2,d,4)\setminus \mathcal{P}_0(2,d,4)$;
\item[(ii)] $\mathcal{P}_0(2,d,4)\ne\emptyset$ and $\dim\mathcal{P}_0(2,d,4)<\beta(2,d,4)$;
\item[(iii)] Butler's Conjecture holds non-trivially for $(2,d,4)$.
\end{enumerate}
If, in addition, $d\ge2g-1$, then $\dim S_0(2,d,4)=\beta(2,d,4)$ and $T(2,d,4)$ is smooth.
\end{theorem}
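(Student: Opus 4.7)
The plan is to assemble the theorem from the preceding propositions on $\mathcal{P}_0(2,d,4)$, together with Theorem \ref{cort}, Proposition \ref{coropencil}, and Proposition \ref{dimcomp}.

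The existence of $d$ in the stated range is a purely numerical check. For small $g$ (say $3\le g\le 7$) one takes $r=1$, so the lower bound becomes $\max\{2g-1,g+4\}$, while the upper bound is $2\lceil 2g/3\rceil+4$; these intervals are non-empty by direct inspection. For larger $g$, one takes $r$ close to $\sqrt{g}$: then both $g-r+\lfloor g/r\rfloor$ and $g+r+3$ are of order $g+O(\sqrt{g})$, well below $4g/3+O(1)$.

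For part (i), Proposition \ref{coropencil} applied with $n=m=2$ gives $T(2,d,4)\cap\mathcal{P}_0(2,d,4)=\emptyset$, hence $T(2,d,4)\subset S_0(2,d,4)\setminus\mathcal{P}_0(2,d,4)$. The reverse inclusion is the converse statement in this rank-$2$ setting, contained in the propositions about $\mathcal{P}_0(2,d,4)$ just referred to: for $(E,V)$ admitting no subpencil, any destabilising sub-coherent-system of $D(E,V)$ would, via the dual span construction, produce a subpencil of $(E,V)$, contradicting the hypothesis; thus $D(E,V)\in S_0(2,d,4)$ and $(E,V)\in T(2,d,4)$.

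For part (ii), $\mathcal{P}_0(2,d,4)$ is produced by extensions
\[
0\lra (L_1,W_1)\lra (E,V)\lra (L_2,W_2)\lra 0
\]
of two generated pencils $(L_i,W_i)$ of type $(1,d_i,2)$ with $d_1+d_2=d$. The upper bound $d\le 2(\lceil 2g/3\rceil+2)$ permits a splitting with each $d_i\le\lceil 2g/3\rceil+2$, which lies in the range where $W^1_{d_i}$ is non-empty on a general curve, yielding $\mathcal{P}_0(2,d,4)\ne\emptyset$. The dimension of the resulting family is the sum of the two pencil moduli (each of dimension $2d_i-g-2$) plus the appropriate $\Ext$ term, and the lower bounds $d\ge g-r+\lfloor g/r\rfloor$ and $d\ge g+r+3$, with $r$ parametrising the splitting, are precisely what make this total strictly less than $\beta(2,d,4)=4d-4g-11$. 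This optimised dimension count is the most delicate step and, I expect, the principal obstacle.

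For part (iii), combining (i) and (ii), $S_0(2,d,4)\setminus T(2,d,4)=\mathcal{P}_0(2,d,4)$ has dimension strictly less than $\beta(2,d,4)$, while every component of $S_0(2,d,4)$ has dimension at least $\beta(2,d,4)$. Hence $T(2,d,4)$ is dense in $S_0(2,d,4)$ and, by Theorem \ref{cort} (whose two density hypotheses collapse to one since $n=m=2$), Butler's Conjecture holds for $(2,d,4)$; non-triviality follows from $\emptyset\ne\mathcal{P}_0(2,d,4)\subset S_0(2,d,4)$. For the final clause, when $d\ge 2g-1$ Proposition \ref{dimcomp} ensures that $T(2,d,4)$ is smooth of dimension $\beta(2,d,4)$; combined with the density just shown and the general lower bound on the dimension of each component of $S_0(2,d,4)$, this forces $\dim S_0(2,d,4)=\beta(2,d,4)$, completing the proof.
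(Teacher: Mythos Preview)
Your overall strategy is exactly the paper's: part (i) is Proposition~\ref{but1}, part (ii) is Propositions~\ref{p47} and~\ref{p3.2}, part (iii) follows from (i) and (ii), and the final clause is Proposition~\ref{dimcomp}. However, your sketch of (ii) contains two inaccuracies that you should correct.

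First, the parameter $r$ does \emph{not} parametrise the splitting $d=d_1+d_2$. The line bundle $L_1^*\otimes L_2^*\otimes K_X$ has degree $2g-2-d$, which is independent of the splitting; $r$ enters only through the classical Brill--Noether bound $h^0(L_1^*\otimes L_2^*\otimes K_X)\le r-1$, valid on a general curve precisely when $2g-2-d<g+r-1-\lfloor g/r\rfloor$, i.e.\ when $d\ge g-r+\lfloor g/r\rfloor$. The inequality~\eqref{eqp3}, namely $d-g-3>h^0(L_1^*\otimes L_2^*\otimes K_X)$, then becomes $d\ge g+r+3$, and \cite[Corollary~3.7]{bgmn} converts this into the dimension estimate. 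So the two lower bounds on $d$ arise from a single uniform bound on an $\Ext$ term, not from optimising over splittings.

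Second, for $\mathcal{P}_0(2,d,4)\ne\emptyset$ it is not enough to exhibit an extension of two generated pencils: one must verify that the resulting $(E,V)$ lies in $S_0(2,d,4)$, i.e.\ that $E$ is semistable and no subpencil of $(E,V)$ has slope $\ge d/2$. Proposition~\ref{p47} does this by taking $d_1=\lfloor(g+3)/2\rfloor$ (the minimal gonality degree) and arguing that any destabilising line subbundle would force the extension to split or would contradict the generatedness of $(L_2,V_2)$. Your choice ``each $d_i\le\lceil 2g/3\rceil+2$'' does not by itself yield this stability conclusion, and you also need $d_1\ge\lfloor(g+3)/2\rfloor$ for $G(1,d_1,2)\ne\emptyset$; the lower bound $d\ge g+4$ (implied by $d\ge g+r+3$) is what makes a valid choice of $(d_1,d_2)$ possible.
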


 In the case of genus $6$, we can obtain more detailed information using the results of Section \ref{sec2d4} and some other techniques (Theorem \ref{p3.3}); this includes in particular a complete description for $d\le12$. The proof involves a number of special arguments in addition to those used in Section \ref{sec2d4}.
In addition to this theorem, we obtain results for coherent systems of type $(2,d,n+2)$ with $n\ge3$ on a curve of genus $6$ and, in particular, observe that Butler's Conjecture holds non-trivially for $(2,10,5)$ (see Proposition \ref{newst26}).

The paper is organized as follows. In Section \ref{seccs}, we give the main results on coherent systems that we will use. In Section \ref{secfirst} we obtain our main results on non-emptiness of $S_0(n,d,k)$ for general $g$  and prove Theorems \ref{cort} and \ref{mercat}. In Section \ref{secug}, we obtain some general results of $P_0(n,d,k)$, $T(n,d,k)$ and $U_g(n,d,k)$.
Section \ref{sec2d4} is devoted to the case $(2,d,4)$ and Section \ref{secg6} to genus $g=6$.

Notation: For a vector bundle $E$ over $X$, we denote by $d_E$, $n_E$, $\mu(E)$ and $h^i(E)$ the degree, the rank of $E$, the slope $\frac{d_E}{n_E}$ of $E$ and the dimension of $H^i(E)$, respectively. We say that a coherent system $(E,V)$ is generated if the evaluation map $V\otimes \mathcal{O}_X\to E$ is surjective and that $E$ is generated if $(E,H^0(E)$) is generated. Throughout the paper, $X$ will denote a non-singular irreducible projective curve defined over ${\mathbb C}$ and $K_X$ will be the canonical bundle on $X$. We make no assumptions about the genus or generality of $X$ except where stated.

\section{Review of coherent systems}\label{seccs}

In this section, we recall the main results on coherent systems that we will use. For a more complete treatment of the subject, see \cite{bgmn} and \cite{news} and the bibliographies therein.

Let $(E,V)$ be a coherent system {\it of type}
$(n,d,k)$ on $X$. A {\it subsystem} of $(E,V)$ is a coherent system $(F,W)$ such that
$F\subset E$ is a subbundle of $E$ and $W\subset H^0(F)\cap V$. For a real number $\alpha$, {\it the $\alpha $-slope} of a coherent system $(E,V)$ of type $(n,d,k)$, denoted by $\mu _{\alpha}(E,V)$, is the quotient
$$\mu _{\alpha}(E,V):= \frac{d+\alpha k}{n}.$$
A coherent system $(E,V)$ is {\it $\alpha$-stable} (resp. {\it $\alpha$-semistable}) if, for all proper subsystems $(F,W)$,
$$\mu _{\alpha}(F,W) <\mu _{\alpha}(E,V) \ \  \ \ (\mbox{resp.} \   \leq ). $$

There exist moduli spaces $G(\alpha;n,d,k)$ of $\alpha$-stable coherent systems of type $(n,d,k)$.
If $k\ge1$, a necessary condition for the non-emptiness of $G(\alpha;n,d,k)$ is
that $\alpha>0$. There are finitely many critical values
$0=\alpha_0<\alpha_1<\cdots<\alpha_L$ of $\alpha$; as $\alpha$
varies, the concept of $\alpha$-stability remains constant between
two consecutive critical values. We denote by $G_0(n,d,k)$ (resp.
$G_L(n,d,k)$) the moduli spaces corresponding to
$0<\alpha<\alpha_1$ (resp. $\alpha>\alpha_L$).
Let
\begin{equation}\label{eq21}
\beta (n,d,k):= n^2(g-1)+1-k(k-d+n(g-1))
\end{equation}
be the {\it Brill-Noether number} for $(g,n,d,k)$, often referred to as the {\it expected dimension}. In fact, every component of
$G(\alpha;n,d,k)$ has dimension $\geq \beta (n,d,k)$ and $G(\alpha;n,d,k)$ is smooth of dimension $\beta (n,d,k)$ in a neighbourhood of $(E,V)$
if and only if the Petri map
\begin{equation}\label{eqpetri}
V\otimes H^0(E^*\otimes K_X)\lra H^0(E\otimes E^*\otimes K_X )
\end{equation}
is injective. When $n=1$, the stability condition is vacuous and we write simply $G(1,d,n+m)$; this is the classical variety of linear systems.

Let $M(n,d)$ (resp.\ $\widetilde{M}(n,d)$) denote the moduli space of stable (resp. S-equivalence classes of semistable) bundles of rank $n$ and degree $d$ on $X$. The {\it Brill-Noether loci} are defined by
$$B(n,d,k):=\{E\in M(n,d)\,|\, h^0(E)\ge k\},\quad\widetilde{B}(n,d,k):=\{[E]\in\widetilde{M}(n,d)\,|\,h^0(\gr E)\ge k\},$$
where $[E]$ denotes the S-equivalence class of $E$ and $\gr E$ is the graded object associated with $E$ through a Jordan-H\"older filtration.

\begin{remark}\begin{em}\label{remvarios}
The following facts are well known.
\begin{enumerate}
\item  There is a forgetful morphism
$$G_0(n,d,k)\lra \widetilde{{B}}(n,d,k):(E,V)\mapsto [E].$$
\item If $E$ is stable, then,
for any linear subspace $V\subset H^0(E)$ of dimension $k$,  $(E,V)\in G_0(n,d,k)$.
\item If $(E,V)\in G_L(n,d,k)$ and $E$ is stable, then $(E,V)\in G(\alpha;n,d,k)$ for all $\alpha >0.$
\item Suppose $n\ge2$, $k\ge1$ and $\alpha>0$. If $(E,V)\in G(\alpha;n,d,k)$, then $d>0$ and $h^0(E^*)=0$.
\end{enumerate}
Here (1)--(3) follow easily from the definitions; for (4), see \cite[Lemma 2.9]{bgmmn}.

\end{em}\end{remark}

\begin{remark}\begin{em}\label{subnoalfa} If $(E,V)\in G_0(n,d,k)$ but $(E,V)\notin G_L(n,d,k)$, then there exists an $\alpha_i>0$ such that $(E,V)$ is strictly $\alpha_i$-semistable. In fact, there
exists an $\alpha_i$-semistable coherent subsystem $(F,W)$ of $(E,V)$ of type $(n',d',k')$ with $\mu_{\alpha_i}(F,W)=\mu_{\alpha_i}(E,V)$, such that $\frac{k}{n} < \frac{k'}{n'}$ and $\frac{d}n>\frac{d'}{n'}$ (see \cite[Lemma 6.5(ii)]{bgmn}).
\end{em}\end{remark}

In this paper, we are mainly interested in generated coherent systems. In this case, $k\ge n$ and we write $n+m$ in place of $k$. If $(E,V)$ is generated of type $(n,d,n+m)$ and $E\not\simeq \mathcal{O}_X^{\oplus n}$, then $d>0$ and $m\ge1$. By Remark \ref{remvarios}(4), if in addition $(E,V)$ is $\alpha$-stable for some $\alpha$, then $h^0(E^*)=0$. It is therefore of interest to consider coherent systems with this property.

\begin{remark}\begin{em}\label{rem1g} Let $(F,W)$ be a subsystem of a generated coherent system of type $(n,d,n+m)$ with $h^0(E^*)=0$. Since any quotient $Q$ of $E$ is generated and $h^0(Q^*)=0$, we have $h^0(Q)\ge n_Q+1$, so $\dim W\leq n_F+m-1$. In particular,
if $(F,W)$ is of type $(1,d',k')$, $k'\leq m$.
\end{em}\end{remark}

\begin{lemma}\label{rembounds}
Let $X$ be a general curve. If $(E,V)$ is a generated coherent system of type $(n,d,n+m)$ and $h^0(E^*)=0$, then
\begin{equation}\label{eqgen11}
d\geq n+\frac{ng}{n+1}
\end{equation}
and
\begin{equation}\label{eqgen21}
d\geq m+\frac{mg}{m+1}.
\end{equation}
\end{lemma}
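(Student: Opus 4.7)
My plan is to reduce the general coherent-system statement to a concrete Brill--Noether statement about line bundles. The two inequalities \eqref{eqgen11} and \eqref{eqgen21} are exchanged by the dual span construction, so it suffices to prove \eqref{eqgen11}: then \eqref{eqgen21} follows by applying the same argument to the generated coherent system $(M^*_{V,E},V^*)$ of type $(m,d,n+m)$. The hypothesis needed to recycle the proof, namely $h^0((M^*_{V,E})^*)=h^0(M_{V,E})=0$, is automatic because the inclusion $V\hookrightarrow H^0(E)$ together with the long exact sequence of \eqref{eq0} forces $H^0(M_{V,E})=0$.

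To prove \eqref{eqgen11}, I first note that $m\ge 1$: otherwise $V\otimes\mathcal{O}_X\to E$ would be a surjection between rank-$n$ torsion-free sheaves and hence an isomorphism, making $E\cong\mathcal{O}_X^n$ and contradicting $h^0(E^*)=0$. I then choose a general $(n+1)$-dimensional subspace $V'\subset V$ and claim $V'$ still generates $E$. The failure locus in $\Grass(n+1,V)\times X$ consists of pairs $(V',x)$ with $\dim(V'\cap\ker \mathrm{ev}_x)\ge 2$; for fixed $x$, the kernel $\ker \mathrm{ev}_x\subset V$ has dimension $m$, and a standard Schubert count shows the condition $\dim(V'\cap\ker \mathrm{ev}_x)\ge 2$ cuts out a subvariety of codimension $2$ in $\Grass(n+1,V)$ (or is empty when $m=1$, in which case $V'=V$). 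Letting $x$ sweep the one-dimensional curve $X$ still leaves a dense open locus of generating $V'$.

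Once such a $V'$ is fixed, the sequence $0\to M_{V',E}\to V'\otimes\mathcal{O}_X\to E\to 0$ has $M_{V',E}$ of rank $1$, so $L:=M_{V',E}^*$ is a line bundle of degree $d$. Dualising yields $0\to E^*\to V'^*\otimes\mathcal{O}_X\to L\to 0$, and the hypothesis $h^0(E^*)=0$ injects $V'^*\hookrightarrow H^0(L)$, so that $h^0(L)\ge n+1$. Since $X$ is general, the classical Brill--Noether theorem applied to $L\in\mathrm{Pic}^d(X)$ forces
\[
g-(n+1)(g-d+n)\ge 0,
\]
which rearranges exactly to $d\ge n+\frac{gn}{n+1}$, giving \eqref{eqgen11}.

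The one technical point is the codimension-two Schubert estimate guaranteeing that a general $(n+1)$-dimensional $V'\subset V$ still generates $E$; everything else is routine bookkeeping with the exact sequences of \eqref{eq0} and its dual, combined with the standard non-emptiness statement of Brill--Noether for line bundles on a general curve.
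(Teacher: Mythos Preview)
Your proof is correct. For \eqref{eqgen11} it coincides with the paper's argument: your line bundle $L=M_{V',E}^*$ is nothing other than $\det E$ (take determinants in the sequence $0\to M_{V',E}\to V'\otimes\mathcal{O}_X\to E\to 0$), and the paper obtains $h^0(\det E)\ge n+1$ by exactly the same dualisation. The only difference is that the paper simply asserts the existence of a generating $(n+1)$-dimensional subspace, whereas you spell out the Schubert/codimension justification.

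For \eqref{eqgen21} your route genuinely differs. You exploit the symmetry of the dual span, applying the already-proved \eqref{eqgen11} to $(M_{V,E}^*,V^*)$ of type $(m,d,n+m)$; this is clean and makes manifest that the two inequalities are formally dual under $D$. The paper instead invokes the standard fact that a generated rank-$n$ bundle sits in an exact sequence $0\to\mathcal{O}_X^{\,n-1}\to E\to\det E\to 0$, and reads off $h^0(\det E)\ge (n+m)-(n-1)=m+1$ directly from cohomology. Both arguments terminate at the same Brill--Noether inequality for $\det E$; the paper's version is marginally more elementary (no passage to the dual span, no second Schubert argument for an $(m+1)$-dimensional generating subspace of $V^*$), while yours highlights the underlying duality and reuses a single lemma rather than quoting two separate ``standard facts''.
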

\begin{proof}
Since $m\ge1$, we can choose a subspace $W\subset V$ with $\dim W=n+1$ which generates $E$.
This gives an exact sequence
\begin{equation}\label{eqgen1}
0\lra \det (E)^*\lra W\otimes \mathcal{O}_X\lra E\lra 0.
\end{equation}
Moreover, it is a standard fact that there exists an exact sequence
\begin{equation}\label{eqgen2}
0\lra \mathcal{O}_X^{n-1}\lra E\lra \det (E)\lra 0.
\end{equation}

From the dual of the exact sequence $(\ref{eqgen1})$ and the fact that $h^0(E^*)=0$, we have $h^0(\det (E))\geq n+1$; so, by classical Brill-Noether theory, $\beta(1,d,n+1)\ge0$, which is equivalent to \eqref{eqgen11}. The  cohomology sequence of (\ref{eqgen2}) gives $h^0(\det (E))\geq m+1$, hence, again by classical Brill-Noether theory, we obtain \eqref{eqgen21}.
\end{proof}

We can obtain a different bound by using higher rank Clifford indices. Recall first Clifford's Theorem for $\alpha$-semistable coherent systems (see \cite{nl}). This states that, for any $\alpha$-semistable coherent system $(E,V)$ of type $(n,d,k)$,
\begin{equation}\label{eqcliff1}
\frac{k}{n}\leq\begin{cases}
\mu(E) +1 -g &\mbox{if} \  \mu (E)\geq 2g\\
\frac{\mu(E)}{2}+1&\mbox{if} \  \mu (E)\leq 2g.\end{cases}
\end{equation}
 We recall now the definition of the rank-$n$ Clifford index from \cite{lnci} (where $\Cliff_n(X)$ is denoted by $\gamma'_n$). We write first, for any bundle $E$,
$$\gamma(E):=\frac1{n_E}(d_E-2(h^0(E)-n_E)).$$
Then, for any curve $X$ of genus $g\ge4$,
\begin{equation}\label{eqcliff0}
\Cliff_n(X):=\min\{\gamma(E)|E \mbox{ semistable},\  n_E=n,\  h^0(E)\ge2n,\  d_E\le n(g-1)\}.
\end{equation}
Any semistable bundle $E$ of rank $n$ with $h^0(E)\ge2n$ and $d_E\le n(g-1)$ is said to {\it contribute to} $\Cliff_n(X)$.

\begin{lemma}\label{lcliff}
Let $X$ be a curve of genus $g\ge4$. If $(E,V)\in G_0(n,d,n+m)$ with $m\ge n$, then
\begin{equation}\label{eqcliff}
d\ge \min\{2m+n\Cliff_n(X),n(g-1)+m-n+1\}.
\end{equation}
\end{lemma}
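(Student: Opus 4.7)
The plan is to apply the rank-$n$ Clifford index to an appropriate semistable bundle: either $E$ itself, when $d$ is small, or $K_X\otimes E^*$, when $d$ is large. A preliminary step is to verify that the underlying bundle $E$ is semistable whenever $(E,V)\in G_0(n,d,n+m)$. Indeed, if $F\subsetneq E$ were a proper subbundle with $\mu(F)>\mu(E)$, then the subsystem $(F,0)$ would satisfy $\mu_\alpha(F,0)=\mu(F)>\mu(E)+\alpha\frac{n+m}{n}=\mu_\alpha(E,V)$ for every sufficiently small $\alpha>0$, contradicting $\alpha$-stability.

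Next I would split into cases according to the size of $d$. If $d\le n(g-1)$, then $E$ is a semistable bundle of rank $n$ with $h^0(E)\ge n+m\ge 2n$ (using $m\ge n$) and $d_E\le n(g-1)$, so $E$ contributes to $\Cliff_n(X)$. The inequality $h^0(E)\ge n+m$ yields
$$\Cliff_n(X)\le\gamma(E)=\frac{d-2(h^0(E)-n)}{n}\le\frac{d-2m}{n},$$
which rearranges to $d\ge 2m+n\Cliff_n(X)$, the first term in the minimum.

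If $d>n(g-1)$, there are two sub-cases. Either $d\ge n(g-1)+m-n+1$ and the second term in the minimum gives what we want, or $d\le n(g-1)+m-n$. In the latter case, Riemann--Roch forces
$$h^1(E)=h^0(E)-d+n(g-1)\ge n+m-d+n(g-1)\ge 2n,$$
so $h^0(K_X\otimes E^*)\ge 2n$ by Serre duality. The bundle $K_X\otimes E^*$ is semistable (as $E$ is) of rank $n$ and has degree $n(2g-2)-d<n(g-1)$, hence contributes to $\Cliff_n(X)$. An analogous computation, using the bound $h^1(E)-n\ge n(g-1)+m-d$, yields $\gamma(K_X\otimes E^*)\le (d-2m)/n$, and again $d\ge 2m+n\Cliff_n(X)$.

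The main obstacle, conceptually, is recognising that the high-degree regime must be handled through $K_X\otimes E^*$ rather than $E$ itself, and verifying that the numerical threshold $d=n(g-1)+m-n$ is exactly where Riemann--Roch begins to force $h^1(E)\ge 2n$; this is the coincidence that makes the two terms of the minimum meet cleanly. The verification that $E$ is semistable is easy but essential, since the Clifford-index estimate is not available without it.
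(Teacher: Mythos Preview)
Your proof is correct and follows the same strategy as the paper's own proof: reduce to the case $d\le n(g-1)+m-n$, then apply the Clifford-index bound to $E$ or to $K_X\otimes E^*$ according to whether $d\le n(g-1)$ or not. The paper's version is simply terser, leaving the case analysis, the semistability of $E$, and the Riemann--Roch computation implicit.
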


\begin{proof}
We can assume that $d\le n(g-1)+m-n$, so that either $E$ or $K_X\otimes E$ contributes to $\Cliff_n(X)$. The result now follows from the definition of $\Cliff_n(X)$.
\end{proof}

\begin{remark}\label{r26}\begin{em}
For $X$ general of genus $g\ge4$ and $(E,V)\in G_0(n,d,n+m)$ with $m\ge n$, Lemma \ref{lcliff} gives a stronger result than Lemma \ref{rembounds} provided
\begin{equation}\label{eqbound}
n\Cliff_n(X)\ge\frac{mg}{m+1}-m+1.
\end{equation}
This is certainly true if $n=2$, since $\Cliff_2(X)=\Cliff(X)=\lfloor\frac{g-1}2\rfloor$ \cite{bf}. For $n=3$, the lower bound for $\Cliff_3(X)$ in \cite[Theorem 4.1]{lnrk3} shows that \eqref{eqbound} holds for $g\le420$. Note, however, that this lower bound holds for any curve of genus $g\ge7$; the lower bound for a general curve is likely to be much larger. It is reasonable to conjecture that \eqref{eqbound} holds for the general curve of genus $g\ge4$ for all $n\ge2$ and $m\ge n$.
\end{em}\end{remark}

\section{$S_0(n,d,n+m)$ and Butler's conjecture}\label{secfirst}

From the results of Section \ref{seccs}, it follows that $S_0(n,d,n+m)=\emptyset$ for sufficiently small $d$. More precisely, we have the following theorem.

\begin{theorem}\label{beta} \
\begin{enumerate}
\item[(i)] If $d<\min\{2m,ng+m\}$, then $S_0(n,d,n+m)=\emptyset$.
\item[(ii)] If $X$ is a general curve and  $d< n+\frac{ng}{n+1}$ or $d< m+\frac{mg}{m+1}$, then $S_0(n,d,n+m)=\emptyset$ and $S_0(m,d,n+m)=\emptyset$. This holds in particular if $d<g+s$ and $g\le s$, where $s=\max\{n,m\}$.
\item[(iii)] If $X$ is a curve of genus $g\ge4$, $m\ge n$ and
$$d\le\min\{2m-1+n\Cliff_n(X),n(g-1)+m-n\},$$
then $S_0(n,d,n+m)=\emptyset$.
\end{enumerate}
\end{theorem}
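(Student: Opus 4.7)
The plan for all three parts is identical in spirit: take $(E,V) \in S_0(n,d,n+m)$ and derive a lower bound on $d$ that contradicts the stated hypothesis. Recall from Remark \ref{remvarios}(4) that any such $(E,V)$ automatically satisfies $h^0(E^*)=0$ and is $\alpha$-semistable for small $\alpha>0$, so both Clifford's inequality \eqref{eqcliff1} and Lemma \ref{rembounds} are available. For part (i), I would apply \eqref{eqcliff1} with $k=n+m$ in each regime separately. When $\mu(E) \le 2g$, the inequality $\tfrac{n+m}{n} \le \tfrac{\mu(E)}{2}+1$ rearranges to $d \ge 2m$; when $\mu(E) \ge 2g$, the other branch gives $d \ge ng+m$. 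In either regime $d \ge \min\{2m,\,ng+m\}$, contradicting the assumption.

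For part (ii), Lemma \ref{rembounds} applies directly to the generated coherent system $(E,V)$ and yields the two simultaneous bounds $d \ge n + \tfrac{ng}{n+1}$ and $d \ge m + \tfrac{mg}{m+1}$. Since these are symmetric in $n$ and $m$, the identical pair of bounds is forced for members of $S_0(m,d,n+m)$, so either numerical hypothesis simultaneously empties both moduli spaces. For the final clause, set $s = \max\{n,m\}$; the larger of the two bounds rewrites as $d \ge s + \tfrac{sg}{s+1} = g + s - \tfrac{g}{s+1}$. Under the assumption $g \le s$ one has $0 \le \tfrac{g}{s+1} < 1$, and integrality of $d$ then upgrades this real inequality to $d \ge g+s$, contradicting $d < g+s$.

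Part (iii) is a direct citation of Lemma \ref{lcliff}: since $S_0(n,d,n+m) \subset G_0(n,d,n+m)$ and $m \ge n$, every element must satisfy $d \ge \min\{2m+n\Cliff_n(X),\,n(g-1)+m-n+1\}$, and both quantities are integers, so for integer $d$ this is incompatible with the assumed upper bound $d \le \min\{2m-1+n\Cliff_n(X),\,n(g-1)+m-n\}$. No substantial difficulty is expected in any of the three parts, since each reduces to applying a previously established inequality. The only mildly delicate step is the integrality manipulation in the \emph{in particular} clause of (ii), which is where a real-valued Brill-Noether bound has to be converted into an integer bound on $d$; the rest is bookkeeping.
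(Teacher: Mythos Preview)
Your proposal is correct and matches the paper's own proof, which simply cites \eqref{eqcliff1}, Lemma~\ref{rembounds}, and Lemma~\ref{lcliff} for parts (i)--(iii) respectively. You have supplied the routine verifications that the paper omits, in particular the integrality argument for the ``in particular'' clause of (ii), which the paper does not spell out; one small simplification in (iii) is that the integrality remark is unnecessary, since the hypothesis $d\le\min\{2m-1+n\Cliff_n(X),\,n(g-1)+m-n\}$ already gives strict inequality against both terms in the bound of Lemma~\ref{lcliff}.
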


\begin{proof} (i) follows from \eqref{eqcliff1}. (ii) follows from Lemma \ref{rembounds} (see also\cite[Theorem 3.9]{mio}). (iii) follows from Lemma \ref{lcliff}.
\end{proof}

The following lemma will play an important r\^ole in what follows.

\begin{lemma}\label{genopen}
$S(\alpha;n,d,n+m)$ is open in $G(\alpha;n,d,n+m)$. Moreover $U_g(n,d,n+m)$ is an open subset of $S(\alpha;n,d,n+m)$ for all $\alpha>0$ and  $T(n,d,n+m)$ is open in $S_0(n,d,n+m)$.
\end{lemma}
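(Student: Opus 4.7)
The plan is to handle the three assertions in turn, in each case reducing to a standard openness property of a condition on a family of coherent systems.

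For the openness of $S(\alpha;n,d,n+m)$ in $G(\alpha;n,d,n+m)$, I would work on an \'etale local universal family $(\mathcal{E},\mathcal{V})$ on $U\times X$, with $U\to G(\alpha;n,d,n+m)$ \'etale. The evaluation map
\[
\mathrm{ev}:\mathcal{V}\otimes_{\mathbb{C}}\mathcal{O}_{U\times X}\lra \mathcal{E}
\]
is a morphism of coherent sheaves on $U\times X$, whose cokernel $\mathcal{C}$ is coherent. The support of $\mathcal{C}$ is closed in $U\times X$, and since the projection $U\times X\to U$ is proper ($X$ being projective), its image is closed in $U$. The complement of this image is precisely the locus of parameters $t$ for which the fibrewise evaluation $V_t\otimes\mathcal{O}_X\to E_t$ is surjective, so $S(\alpha;n,d,n+m)$ is open in $G(\alpha;n,d,n+m)$.

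For the openness of $U_g(n,d,n+m)$ in $S(\alpha;n,d,n+m)$, I would first show that $U^s(n,d,n+m)$ is open in $G(\alpha;n,d,n+m)$. By definition, $(E,V)\in U^s$ if and only if it is $\alpha'$-stable for every $\alpha'>0$. Since $\alpha$-stability is constant between consecutive critical values, this is equivalent to $(E,V)$ being $\alpha_i$-stable at each of the finitely many critical values $\alpha_1<\cdots<\alpha_L$. For each $i$ the locus of $(E,V)$ in the moduli space which is strictly $\alpha_i$-semistable (cf.\ Remark \ref{subnoalfa}) is closed, so $U^s(n,d,n+m)$ is the complement of a finite union of closed subsets and is therefore open. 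Since $U_g(n,d,n+m)=U^s(n,d,n+m)\cap S(\alpha;n,d,n+m)$, the first assertion then gives openness of $U_g(n,d,n+m)$ in $S(\alpha;n,d,n+m)$.

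For the openness of $T(n,d,n+m)$ in $S_0(n,d,n+m)$, the key step is to perform the dual span construction in families. On an \'etale neighbourhood $U\to S_0(n,d,n+m)$ equipped with a local universal family $(\mathcal{E},\mathcal{V})$, the kernel $\mathcal{M}$ of the evaluation map is locally free on $U\times X$, because $\mathcal{E}$ is locally free and $\mathrm{ev}$ is fibrewise surjective. Dualising the resulting short exact sequence produces a family $(\mathcal{M}^*,\mathcal{V}^*)$ of generated coherent systems of type $(m,d,n+m)$ parameterised by $U$, whose fibre at $t$ is $D(\mathcal{E}_t,\mathcal{V}_t)$. Because $\alpha$-stability is an open condition on families, the locus of $t\in U$ for which $(\mathcal{M}^*_t,\mathcal{V}^*_t)\in G_0(m,d,n+m)$ is open; as the dual span is always generated, this locus coincides with the preimage of $T(n,d,n+m)$. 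Hence $T(n,d,n+m)$ is open in $S_0(n,d,n+m)$.

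The main technical point to verify is that the dual span really behaves well in families: that $\mathcal{M}$ is locally free on the total space $U\times X$, that formation of $\mathcal{M}^*$ commutes with base change along $\{t\}\hookrightarrow U$, and that $\mathcal{V}^*$ sits inside $\pi_{U*}\mathcal{M}^*$ as a subbundle realising the claimed coherent system structure. All of this is routine once fibrewise surjectivity of $\mathrm{ev}$ is known on a neighbourhood; the remaining ingredients --- properness of $X$, openness of $\alpha$-stability, and finiteness of the set of critical values --- are standard facts recalled in Section \ref{seccs}.
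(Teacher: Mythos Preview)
Your proof is correct and follows essentially the same strategy as the paper's: \'etale-local universal families, properness of $X$ to push down the non-surjective locus, the dual span performed in families, and openness of $\alpha$-stability combined with finiteness of the set of critical values. One small imprecision worth tightening: in the second part you assert that the strictly $\alpha_i$-semistable locus is closed, but strict semistability is in general only locally closed; the cleaner formulation (and the one the paper uses) is simply that $\alpha'$-stability is open for each fixed $\alpha'$, so $U^s$ is a finite intersection of open subsets.
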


\begin{proof}
Suppose first that $\gcd(n,d,n+m)=1$. By \cite[Proposition A.8]{bgmmn}, there exists a universal family of coherent systems over $G(\alpha;n,d,n+m)\times X$. Denote by $p_G$ the projection of $G(\alpha;n,d,n+m)\times X$ onto the first factor. According to \cite[Definition A.6]{bgmmn}, we have a pair $(\mathcal{E},\mathcal{V})$, where $\mathcal{E}$ is a vector bundle on $G(\alpha;n,d,n+m)\times X$ and $\mathcal{V}$ is a locally free subsheaf of $p_{G*}\mathcal{E}$ such that the induced homomorphism $p_G^*\mathcal{V}\to \mathcal{E}$ restricts over $\{(E,V)\}\times X$ to the evaluation map of $(E,V)$. By semicontinuity, the set $S$ of points of $G(\alpha;n,d,n+m)\times X$  at which $p_G^*\mathcal{V}\to \mathcal{E}$ is surjective is open. Now
$$S(\alpha;n,d,n+m)=G(\alpha;n,d,n+m)\setminus p_G((G(\alpha;n,d,n+m)\times X)\setminus S).$$
Since $p_G$ is proper, $p_G((G(\alpha;n,d,n+m)\times X)\setminus S)$ is closed in $G(\alpha;n,d,n+m)$, so $S(\alpha;n,d,n+m)$ is open.

It follows from the definition that $U_g(n,d,n+m)\subset S(\alpha;n,d,n+m)$ for all $\alpha>0$, and from the openness of $\alpha$-stability and the fact that there are only finitely many critical values for $\alpha$ that it is open.

Finally, over $S_0(n,d,n+m)\times X$, the homomorphism $p_X^*\mathcal{V}\to \mathcal{E}$ is surjective, so we have an exact sequence of vector bundles
$$0\lra\mathcal{K}\lra p_X^*\mathcal{V}\lra \mathcal{E}\lra0.$$
Dualising, we obtain a family of coherent systems $(\mathcal{K}^*,\mathcal{V}^*)$, which restricts over $\{(E,V)\}\times X$ to $D(E,V)$. It follows from the openness of $\alpha$-stability that $T(n,d,n+m)$ is open in $S_0(n,d,n+m)$.

If $\gcd(n,d,n+m)>1$, there is no universal family, but universal families exist locally in the \'etale topology. This is sufficient for the proof to go through.
\end{proof}

 For the next result we first recall the definition and properties of the Picard sheaf. First suppose that $\gcd(n,d)=1$ and let $\mathcal{U}$ be the universal  bundle over $X\times M(n,d)$.  The {\it Picard sheaf} $\mathcal{W}$ over $M(n,d)$ is the direct image sheaf $\mathcal{W}:=p_{2*}(\mathcal{U})$ where $p_2$ is the projection on the second factor. For $d\geq n(g-1)$, the open set
$$Z:=\{E\in M(n,d)| h^0(E)=d+n(1-g)\}$$
is non-empty and $\mathcal{W}|_{Z}$ is a vector bundle of rank $d+n(1-g)$.  For $k\le d+n(1-g)$, the Grassmannian bundle $\operatorname{Grass}(k,\mathcal{W}|_Z)$ is smooth and irreducible and
\begin{equation}\label{eqgrass}
\dim \operatorname{Grass}(k,\mathcal{W}|_Z)=n^2(g-1)+1+k(d+n(1-g)-k)=\beta(n,d,k).
\end{equation}
If $\gcd(n,d)\ne1$, the universal bundle does not exist, but one can define a projective Picard bundle over $Z$ (see \cite{bis}) and the associated Grassmannian bundles $\operatorname{Grass}(k,\mathcal{W}|_Z)$, and \eqref{eqgrass} still holds. If $d>2n(g-1)$, then $Z=M(n,d)$ and $\mathcal{W}|_Z=\mathcal{W}$.

\begin{theorem}\label{props0}
Let $X$ be a general curve of genus $g\ge2$ and suppose that $d\ge ng+m$ with $m\ge1$. Then
\begin{enumerate}
\item[(i)] there exists $E\in M(n,d)$ with $E$ generated and $h^0(E)=d+n(1-g)$;
\item[(ii)] $S_0(n,d,n+m)$ possesses an irreducible component which has dimension $\beta(n,d,n+m)$ and is birational to $\operatorname{Grass}(n+m,\mathcal{W}|_Z)$;
\item[(iii)] if $d>2n(g-1)$, $S_0(n,d,n+m)$ is irreducible of dimension $\beta(n,d,n+m)$ and is birational to $\operatorname{Grass}(n+m,\mathcal{W})$;
\end{enumerate}
\end{theorem}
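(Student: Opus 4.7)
For part (i), the plan is to combine a Brill-Noether-type existence statement on the general curve with a semicontinuity argument on $M(n,d)\times X$. Since $d\ge ng+m$ implies $d\ge n(g-1)+n+1$, the Brill-Noether number $\beta(n,d,d-n(g-1)+1)=(n^2+n)(g-1)-d$ is strictly less than $\dim M(n,d)=n^2(g-1)+1$, so on a general curve---by the known existence and dimension results for Brill-Noether loci of vector bundles (e.g.\ \cite{mer}), or by a direct construction via extensions of generic line bundles of degree $\ge g$---the open subset $Z\subset M(n,d)$ of bundles with $h^0(E)=d-n(g-1)$ and $h^1(E)=0$ is non-empty. To upgrade a generic $E\in Z$ to a \emph{generated} bundle, consider the closed incidence locus
$$\Sigma:=\{(E,p)\in Z\times X\mid h^1(E(-p))>0\}.$$
Since $E(-p)$ has degree $d-n\ge n(g-1)+1$, the same existence input applied in degree $d-n$ shows that $\Sigma$ cannot dominate $Z$; any $E$ in the complement of the image of $\Sigma$ is stable with the required cohomology and satisfies $h^1(E(-p))=0$ for all $p\in X$, hence is generated.

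For part (ii), by \eqref{eqgrass} the Grassmann bundle $\pi\colon\Grass(n+m,\mathcal{W}|_Z)\to Z$ is smooth and irreducible of dimension $\beta(n,d,n+m)$, with non-empty fibres $\Grass(n+m,H^0(E))$ since $h^0(E)=d-n(g-1)\ge n+m$ by hypothesis. Let $U\subset\Grass(n+m,\mathcal{W}|_Z)$ be the open subvariety of $(E,V)$ with $V$ generating $E$. Taking any $E$ produced by (i), for each $p\in X$ the subspace $H^0(E(-p))\subset H^0(E)$ has codimension $n$, so the Schubert locus $\{V:\dim(V\cap H^0(E(-p)))\ge m+1\}$ in the fibre has codimension $m+1$, and its union over $p\in X$ has codimension at least $m\ge 1$; hence $U$ is non-empty and dense of dimension $\beta(n,d,n+m)$. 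The universal Picard family on $Z$ (constructed étale-locally when $\gcd(n,d)>1$, as in the proof of Lemma \ref{genopen}) together with the tautological subbundle on the Grassmann bundle produces a family of generated coherent systems on $U\times X$, inducing a classifying morphism $\Phi\colon U\to S_0(n,d,n+m)$ which is injective on closed points. Because every component of $S_0$ has dimension $\ge\beta(n,d,n+m)$ and $\dim\Phi(U)=\beta$, the closure $\overline{\Phi(U)}$ is an irreducible component of $S_0$, and $\Phi$ is birational onto it with inverse $(E,V)\mapsto(E,V)\in\Grass(n+m,H^0(E))\subset\Grass(n+m,\mathcal{W}|_Z)$.

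For part (iii), the hypothesis $d>2n(g-1)$ gives $\mu(K_X\otimes E^*)<0$ for every stable $E$; since $K_X\otimes E^*$ is then stable of negative slope it has no sections, so $h^1(E)=0$ and $Z=M(n,d)$. The argument of (ii) already produces an irreducible component $\overline{\Phi(U)}\subset S_0$ of dimension $\beta$; any $(E,V)\in S_0$ with $E$ stable then has $V$ generating $E\in Z=M(n,d)$ and lies in $\Phi(U)$. The remaining locus $\{(E,V)\in S_0\mid E\text{ strictly semistable}\}$ sits above $\widetilde M(n,d)\setminus M(n,d)$, a subvariety of dimension strictly smaller than $\dim M(n,d)$; a standard fibre-dimension bound for the forgetful map $G_0\to\widetilde M(n,d)$ shows this stratum has dimension strictly less than $\beta(n,d,n+m)$, so it is absorbed in a proper closed subset of $S_0$. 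Therefore $S_0(n,d,n+m)=\overline{\Phi(U)}$ is irreducible.

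The principal obstacle is part (i): Brill-Noether genericity controls $h^0(E)$ but not generatedness, which is a pointwise condition over $X$, so one is forced to work on the incidence space $Z\times X$ and invoke the existence result twice (in degrees $d$ and $d-n$). Once (i) is in hand, the Schubert-codimension computation in (ii) and the strictly semistable dimension bound in (iii) are essentially routine.
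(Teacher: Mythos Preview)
Your arguments for (ii) and (iii) are essentially correct and close to the paper's: the paper simply identifies the open set $Y=\{(E,V)\in G_0(n,d,n+m):E\text{ stable},\ h^0(E)=d+n(1-g)\}$ with $\Grass(n+m,\mathcal{W}|_Z)$, and once (i) furnishes a generated $E\in Z$ the intersection $Y\cap S_0$ is non-empty, giving (ii); for (iii) it invokes the same strictly-semistable dimension count (via \cite[Lemma 4.1]{bgn}).

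The genuine gap is in (i). For $\Sigma$ not to dominate $Z$ you need each slice $\Sigma_p=\{E\in Z:h^1(E(-p))>0\}$ to have codimension at least $2$ in $Z$, so that the union over the $1$-dimensional parameter $p\in X$ remains proper. But ``the same existence input in degree $d-n$'' only says $Z_{d-n}\ne\emptyset$; via the twist isomorphism $M(n,d)\cong M(n,d-n)$ this gives codimension $\ge1$ for each $\Sigma_p$ and nothing more. A union over $X$ of codimension-$1$ loci can perfectly well cover $Z$ (every $E$ could fail to be generated at one point). What you actually need is an \emph{upper} bound on $\dim\{F\in M(n,d-n):h^1(F)>0\}$ on a general curve---a higher-rank Brill-Noether dimension estimate---and neither \cite{mer} (which bounds $h^0$ for small slope, not dimensions of loci in this range) nor the direct construction of a single bundle with $h^1=0$ supplies it. The paper sidesteps this entirely by proving (i) by induction on $d\ge ng+1$: the base case $d=ng+1$ comes from \cite{br}, and for the inductive step one takes a stable generated $F\in M(n,d-1)$ with $h^1(F)=0$ and forms an elementary transformation $0\to F\to E\to\mathbb{C}_p\to0$; the vanishing $h^1(F)=0$ makes $H^0(E)\to\mathbb{C}_p$ surjective, so $E$ is generated with $h^0(E)=d+n(1-g)$, and one then deforms $E$ to a stable bundle while preserving these open properties.
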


\begin{proof}
(i) and (ii): The open set $Y$ in $G_0(n,d,n+m)$ defined by
$$Y:=\{(E,V)\in G_0(n,d,n+m)|E \mbox{ stable }, h^0(E)=d+n(1-g)\}$$
can be identified with  $\operatorname{Grass}(n+m,\mathcal{W}|_Z)$. In view of \eqref{eqgrass}, this proves the analogue of (ii) for $G_0(n,d,n+m)$. If (i) holds, then $Y\cap S_0(n,d,n+m)\ne\emptyset$ and (ii) follows from the fact that $S_0(n,d,n+m)$ is open in $G_0(n,d,n+m)$ by Lemma \ref{genopen}. It remains only to prove (i), which we shall do by induction on $d\ge ng+1$.

Suppose first that $d=ng+1$ and $m=1$. Then (i) follows from \cite[Theorem 1.1 and formulae (1.4) and (1.6)]{br}. Now suppose that $d>ng+1$ and that $F\in M(n,d-1)$ with $F$ generated and $h^0(F)=d-1+n(1-g)$. Now consider elementary transformations
$$0\lra F\lra E\lra \mathbb{C}_p\lra 0.$$
Since $h^1(F)=0$, the projection $H^0(E)\to \mathbb{C}_p$, induced by the above exact sequence, is surjective. Hence $E$ is generated and $h^0(E)=d+n(1-g)$. Such $E$ is not necessarily stable, but can be deformed to a stable bundle with the same properties. This completes the proof of (i) by induction.

(iii): If $d>2n(g-1)$, then every semistable bundle $E$ of rank $n$ and degree $d$ has $h^0(E)=d+n(1-g)$. It follows that
$$S_0(n,d,n+m)\setminus Y=\{(E,V)\in S_0(n,d,n+m)|E\mbox{ strictly semistable}\}.$$
Recall now that the isomorphism classes of strictly semistable bundles of given rank and degree depend on fewer than $\dim M(n,d)$ parameters (see \cite[Lemma 4.1]{bgn}); it follows that $\dim(S_0(n,d,n+m)\setminus Y)<\beta(n,d,n+m)$, so that $Y\cap S_0(n,d,n+m)$ is dense in $S_0(n,d,n+m)$. This completes the proof of (iii).
\end{proof}

\begin{corollary}\label{cor3.4}
Let $X$ be a general curve of genus $g\ge3$ and $m\ge ng$. Then $S_0(n,d,n+m)\ne\emptyset$ if and only if $d\ge ng+m$.
\end{corollary}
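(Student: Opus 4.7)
The plan is to deduce this corollary as a direct combination of Theorem \ref{beta}(i) and Theorem \ref{props0}(ii), without needing any new construction. The claim has two implications to verify, and both reduce to observing that the hypothesis $m\ge ng$ forces the minimum in Theorem \ref{beta}(i) to be realised by the term $ng+m$ rather than $2m$.

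For the ``if'' direction, I would simply invoke Theorem \ref{props0}(ii). The hypothesis $g\ge3$ and $m\ge ng\ge n\ge1$ ensures $m\ge1$, so that theorem applies whenever $d\ge ng+m$ and produces an irreducible component of $S_0(n,d,n+m)$ of dimension $\beta(n,d,n+m)$; in particular $S_0(n,d,n+m)\ne\emptyset$. (Note that this direction does not actually require $m\ge ng$, just $m\ge1$.)

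For the ``only if'' direction, assume $d<ng+m$. The assumption $m\ge ng$ gives $2m=m+m\ge m+ng$, so
\begin{equation*}
\min\{2m,\,ng+m\}=ng+m.
\end{equation*}
Hence the condition $d<\min\{2m,ng+m\}$ in Theorem \ref{beta}(i) is satisfied, and that theorem yields $S_0(n,d,n+m)=\emptyset$, as required.

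There is essentially no obstacle here: the corollary is purely a bookkeeping consequence of the two preceding results, and the role of the hypothesis $m\ge ng$ is only to make the two bounds in Theorem \ref{beta}(i) collapse into the single sharp bound $d\ge ng+m$ that Theorem \ref{props0}(ii) matches on the existence side. The condition $g\ge3$ is not used in the argument beyond ensuring that $m\ge ng\ge1$ so that Theorem \ref{props0} applies; in fact the statement would hold for $g\ge1$ with the same proof.
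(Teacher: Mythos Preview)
Your proof is correct and follows exactly the approach of the paper, which simply states that the corollary follows at once from Theorems \ref{beta}(i) and \ref{props0}. One minor correction to your final aside: Theorem \ref{props0} is stated for $g\ge2$, so the argument as written extends to $g\ge2$ rather than $g\ge1$.
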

\begin{proof} This follows at once from Theorems \ref{beta}(i) and \ref{props0}.
\end{proof}

\begin{remark}\label{r42}\begin{em}
If $m<ng$, there can be a big difference between the bounds given by Theorems \ref{beta}(i) and \ref{props0}. In particular, if $2m\le d<ng+m$, these theorems do not determine the non-emptiness of $S_0(n,d,n+m)$.
\end{em}\end{remark}

\begin{remark}\label{r43a}\begin{em}
The proof of Theorem \ref{props0} shows that, if $d\ge ng+1$, the general element $E\in M(n,d)$ is generated. It seems likely that this has been known for a long time, but we have been unable to locate a proof in the literature.
\end{em}\end{remark}

Using this theorem, we give an example which shows that Butler's Conjecture can fail when $g=2$.

\begin{example}\label{exg2}\begin{em}
Let $g=2$, $n\ge2$, $3n+2a<d<4n+2a$ for some positive integer $a$ and $m=d-2n-a$. Then the hypotheses of Theorem \ref{props0} are satisfied, so $S_0(n,d,n+m)\ne\emptyset$. Now let $(E,V)\in S_0(n,d,n+m)$. If $D(E,V)\in S_0(m,d,n+m)$, then $M_{V,E}^*$ is semistable of slope $\mu(M_{V,E}^*)=\frac{d}{d-2n-a}>2$ since $d<4n+2a$. Hence $h^1(M_{V,E}^*)=h^0(M_{V,E}\otimes K_X)=0$ and the homomorphism $H^1(E^*)\to V^*\otimes H^1({\mathcal O}_X)$ induced by the dual of \eqref{eq0} is surjective. Hence
$$d+n=h^1(E^*)\ge 2(n+m)=2d-2n-2a,$$
contradicting the assumption that $d>3n+2a$. It follows that $M_{V,E}^*$ is not semistable, so $D(E,V)\not\in S_0(m,d,n+m)$. Thus Butler's Conjecture fails for $(n,d,n+m)$.
\end{em}\end{example}

In general we have the following results.

\begin{proposition}\label{propt}
The formula $(E,V)\mapsto D(E,V)$ defines an isomorphism of schemes
$$D:T(n,d,n+m)\lra T(m,d,n+m).$$
\end{proposition}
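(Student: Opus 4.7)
The strategy is to argue first that $D$ is a set-theoretic bijection from $T(n,d,n+m)$ to $T(m,d,n+m)$ whose inverse is again $D$, and then to promote this to an isomorphism of schemes via the (étale local) universal families used in the proof of Lemma \ref{genopen}.

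For the set-theoretic part, let $(E,V)\in T(n,d,n+m)$. Since $(E,V)$ is $\alpha$-stable for small $\alpha>0$, Remark \ref{remvarios}(4) gives $h^0(E^*)=0$ when $n\ge2$, and this is automatic when $n=1$ because $d>0$. Dualising $0\to M_{V,E}\to V\otimes \mathcal{O}_X\to E\to 0$ then produces
$$0\to E^*\to V^*\otimes \mathcal{O}_X\to M^*_{V,E}\to 0,$$
which is precisely the kernel-evaluation sequence of $D(E,V)=(M^*_{V,E},V^*)$; in particular $M_{V^*,M^*_{V,E}}=E^*$. The cohomology sequence of the original sequence gives $h^0(M_{V,E})=0$ since $V\hookrightarrow H^0(E)$ is injective, so the dualisation may be repeated: the dual of the kernel-evaluation sequence of $D(E,V)$ is
$$0\to M_{V,E}^{**}\to V^{**}\otimes \mathcal{O}_X\to E^{**}\to 0,$$
which, under the canonical double-dual identifications, agrees with the original sequence of $(E,V)$. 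Hence $D(D(E,V))=(E,V)$. Since $D(E,V)\in S_0(m,d,n+m)$ by definition of $T(n,d,n+m)$ and $D(D(E,V))=(E,V)\in S_0(n,d,n+m)$, we conclude $D(E,V)\in T(m,d,n+m)$. The symmetric argument shows $D$ maps $T(m,d,n+m)$ into $T(n,d,n+m)$ and that both set-theoretic compositions are the identity.

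To see $D$ is a morphism of schemes, I would mimic the proof of Lemma \ref{genopen}. In the case $\gcd(n,d,n+m)=1$, the universal family on $S_0(n,d,n+m)\times X$ fits into $0\to\mathcal{K}\to p_X^*\mathcal{V}\to\mathcal{E}\to 0$; dualising yields a family $(\mathcal{K}^*,\mathcal{V}^*)$ of generated coherent systems, all of whose fibres lie in $S_0(m,d,n+m)$ over the open subset $T(n,d,n+m)$. The universal property of $G_0(m,d,n+m)$ thus supplies a morphism $T(n,d,n+m)\to T(m,d,n+m)$ inducing the dual span on geometric points. Interchanging the roles of $n$ and $m$, the same construction produces a morphism $T(m,d,n+m)\to T(n,d,n+m)$, and because dualising twice at the level of the universal family returns the original family via the canonical isomorphism $\mathcal{E}\cong\mathcal{E}^{**}$, the two compositions equal the identity morphism. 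Consequently $D$ is an isomorphism of schemes.

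The main technical obstacle is the case $\gcd(n,d,n+m)>1$, where a genuine universal family does not exist and one must work with étale local families; the biduality and openness arguments descend in the standard way, exactly as indicated in the final sentence of the proof of Lemma \ref{genopen}, so the argument goes through without essential change.
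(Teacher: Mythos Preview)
Your proof is correct and follows essentially the same approach as the paper. The paper's own proof is extremely terse: it simply notes that the scheme structure on $T(n,d,n+m)$ comes from Lemma \ref{genopen} and that $D(D(E,V))=(E,V)$, leaving implicit the fact (established in the proof of Lemma \ref{genopen} via the dualised family $(\mathcal{K}^*,\mathcal{V}^*)$) that $D$ is a morphism; you have simply unpacked these steps explicitly.
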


\begin{proof}
The fact that $T(n,d,n+m)$ has a natural structure as a scheme follows from Lemma \ref{genopen}. Since $D(D(E,V))=(E,V)$, the result then follows immediately from the definition.
\end{proof}

\begin{theorem}\label{cort}
Butler's Conjecture holds for $(n,d,n+m)$ if and only if $T(n,d,n+m)$ is dense in $S_0(n,d,n+m)$ and $T(m,d,n+m)$ is dense in $S_0(m,d,n+m)$. In particular, if both $S_0(n,d,n+m)$ and $S_0(m,d,n+m)$ are irreducible and $T(n,d,n+m)$ is non-empty, then Butler's Conjecture holds non-trivially for $(n,d,n+m)$.
\end{theorem}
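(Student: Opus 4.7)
The plan is to exploit two results already at our disposal: Lemma \ref{genopen} tells us that $T(n,d,n+m)$ is open in $S_0(n,d,n+m)$ (and $T(m,d,n+m)$ open in $S_0(m,d,n+m)$), while Proposition \ref{propt} gives a scheme isomorphism $D\colon T(n,d,n+m)\xrightarrow{\sim} T(m,d,n+m)$ with $D\circ D=\operatorname{id}$. With these tools the equivalence reduces to a formal manipulation of dense open subsets.

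For the $(\Leftarrow)$ direction, suppose $T(n,d,n+m)$ is dense in $S_0(n,d,n+m)$ and $T(m,d,n+m)$ is dense in $S_0(m,d,n+m)$. The density of $T(n,d,n+m)$ immediately yields the first clause of Butler's Conjecture, since a general $(E,V)\in S_0(n,d,n+m)$ then lies in $T(n,d,n+m)$ and hence satisfies $D(E,V)\in S_0(m,d,n+m)$. The isomorphism of Proposition \ref{propt} between the two dense open subsets $T(n,d,n+m)$ and $T(m,d,n+m)$ then supplies a birational map $S_0(n,d,n+m)\dashrightarrow S_0(m,d,n+m)$, giving the second clause.

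For the $(\Rightarrow)$ direction, assume Butler's Conjecture. The first clause asserts that $T(n,d,n+m)$ contains a Zariski-dense subset of $S_0(n,d,n+m)$; combined with the openness from Lemma \ref{genopen}, this makes $T(n,d,n+m)$ itself dense. For the second clause, one interprets "birational" as referring to the rational map induced by the dual-span construction $D$ (which is the unique natural candidate, and is already well defined on a dense open set by the first clause). Its inverse is again $D$, whose domain of definition in $S_0(m,d,n+m)$ is precisely $T(m,d,n+m)$; the existence of a rational inverse therefore forces $T(m,d,n+m)$ to be dense in $S_0(m,d,n+m)$.

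The "in particular" clause follows immediately: if $S_0(n,d,n+m)$ is irreducible and $T(n,d,n+m)\neq\emptyset$, then $T(n,d,n+m)$ is a non-empty open, hence dense, subset; via Proposition \ref{propt}, $T(m,d,n+m)$ is non-empty, and by irreducibility of $S_0(m,d,n+m)$ together with Lemma \ref{genopen} it is dense as well. The equivalence just proved then gives Butler's Conjecture, non-trivially since $T(n,d,n+m)\subset S_0(n,d,n+m)$ is non-empty. The one genuine subtlety in the whole argument is the interpretation of "birational" in the statement of the conjecture; once we agree that the intended birational map is the dual span $D$, the rest is bookkeeping on dense open loci.
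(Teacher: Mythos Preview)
Your proposal is correct and takes essentially the same approach as the paper: the paper's proof is a single sentence citing Proposition~\ref{propt} and Lemma~\ref{genopen}, and you have simply unpacked that sentence into its constituent steps. Your acknowledgment of the interpretive subtlety in the $(\Rightarrow)$ direction---that ``birational'' must be read as ``birational via $D$'' for the argument to recover the density of $T(m,d,n+m)$---is a fair point that the paper leaves implicit.
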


\begin{proof}
This follows at once from Proposition \ref{propt} and Lemma \ref{genopen}.
\end{proof}

\begin{remark}\label{rembalr}\begin{em}
According to \cite[Theorem 2.13]{balr}, $T(n,d,n+m)\ne\emptyset$ when $d$ is sufficiently large compared with $g$, $n$ and $m$.
\end{em}\end{remark}

We can now complete and extend Mercat's result \cite[Ch 2 Th\'{e}or\`{e}me B-1]{mer}.

\begin{theorem}\label{mercat}
Let $X$ be a curve of genus $g\ge2$ and let $m=d-ng$.
\begin{enumerate}
\item[(i)] If $d>2ng-n$, then
\begin{equation}\label{eqmer2}
S_0(n,d,n+m)=G_0(n,d,n+m)=M(n,d)
\end{equation}
and, in particular, $S_0(n,d,n+m)$ is smooth and irreducible of dimension $\beta(n,d,n+m)$.
\item[(ii)] If $d>2ng$, then $S_0(m,d,n+m)=G_0(m,d,n+m)=B(m,d,n+m)$; moreover, Butler's Conjecture holds non-trivially and the dual span construction defines an isomorphism
\begin{equation}\label{mer1}
D:S_0(n,d,n+m)\lra S_0(m,d,n+m).
\end{equation}
\item[(iii)] If $g\ge3$ and $X$ is not hyperelliptic, then $S_0(ng, 2ng,n(g+1))$ is irreducible and
\begin{equation}\label{mer3}
\emptyset\ne B(ng,2ng,n(g+1))\subset T(ng,2ng,n(g+1))\subset S_0(ng,2ng,n(g+1));
\end{equation}
moreover, Butler's Conjecture holds non-trivially for $(n,2ng,n(g+1))$.
\end{enumerate}
\end{theorem}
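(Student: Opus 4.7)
The proof splits along the three parts of the theorem.

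For (i), the hypothesis $d > n(2g-1)$ forces $\mu(E) > 2g-1$ for every semistable $E$ of rank $n$ and degree $d$; a standard argument with the twisted sequence $0 \to E(-p) \to E \to E_p \to 0$ and Serre duality gives both $h^0(E) = \chi(E) = n+m$ (since $h^1(E) = 0$) and that $E$ is globally generated. Thus $E \mapsto (E, H^0(E))$ defines an injection $M(n,d) \hookrightarrow S_0(n,d,n+m)$, which by Remark \ref{remvarios}(2) identifies $M(n,d)$ with an open subscheme of $G_0(n,d,n+m)$. For the equality $G_0(n,d,n+m) = M(n,d)$, suppose $(E,V) \in G_0$ with $E$ strictly semistable; take $F \subset E$ with $\mu(F) = \mu(E)$, and $W = H^0(F) = V \cap H^0(F)$ (the latter equality forced by $V = H^0(E)$ and $h^1$-vanishing for both $E$ and $F$). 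A direct computation yields $h^0(F)/n_F = \mu(F) - (g-1) = (n+m)/n$, whence $\mu_\alpha(F,W) = \mu_\alpha(E,V)$ for all $\alpha$, contradicting $\alpha$-stability. Substituting $m = d - ng$ into \eqref{eq21} gives $\beta(n,d,n+m) = n^2(g-1)+1 = \dim M(n,d)$, proving (i).

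For (ii), with $d > 2ng$, invoke Butler's Theorem \cite[Theorem 2.1]{bu}: for every $E \in M(n,d)$, the kernel $M_{H^0(E),E}$ is stable. Dualising the evaluation sequence, $D(E, H^0(E)) = (M_{V,E}^*, V^*)$ is a generated coherent system of type $(m,d,n+m)$ with stable underlying bundle, so by Remark \ref{remvarios}(2) it lies in $S_0(m,d,n+m)$, and its underlying bundle lies in $B(m,d,n+m)$. Hence $D:S_0(n,d,n+m) \to S_0(m,d,n+m)$ is well-defined and, being an involution, is injective with image $T(m,d,n+m)$ (using Proposition \ref{propt} and the fact, from part (i) combined with Butler, that $T(n,d,n+m) = S_0(n,d,n+m) = M(n,d)$). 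A Brill--Noether computation yields $\beta(m,d,n+m) = n^2(g-1)+1 = \dim T(m,d,n+m)$. Combined with the irreducibility of $S_0(m,d,n+m)$ (via the Grassmannian-bundle description in Theorem \ref{props0}(iii), which applies because $d > 2ng \geq 2n(g-1)$), one concludes $T(m,d,n+m) = S_0(m,d,n+m)$, so $D$ is an isomorphism, $S_0(m,d,n+m) = B(m,d,n+m)$, and $G_0(m,d,n+m) = B(m,d,n+m)$ follows from the dual-side $\alpha$-slope argument of (i). Theorem \ref{cort} then delivers Butler's Conjecture non-trivially.

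For (iii), at the boundary $d = 2ng$, $m = ng$, part (i) still gives $S_0(n, 2ng, n(g+1)) = M(n,2ng)$, but Butler's Theorem only yields semistability of $M_{H^0(E),E}$. The non-hyperelliptic hypothesis is invoked precisely to promote this to stability for a general stable $E \in M(n,2ng)$: a destabilising quotient of slope $-2g$ would force geometric constraints on $(E, H^0(E))$ (related to the presence of a $g^1_2$ on $X$) that are unavailable when $X$ is non-hyperelliptic. Granting this, $D(E, H^0(E)) = (M_{V,E}^*, V^*)$ has stable underlying bundle of rank $ng$, degree $2ng$, and at least $n(g+1)$ sections, so $B(ng, 2ng, n(g+1)) \neq \emptyset$ and the inclusion $B \subseteq T$ is automatic from the construction. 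Irreducibility of $S_0(ng, 2ng, n(g+1))$ follows because $T(ng, 2ng, n(g+1)) = D(U)$ for the non-empty open subset $U \subseteq M(n,2ng)$ where $M_{H^0(E),E}$ is stable; this image is irreducible of dimension $\beta(ng, 2ng, n(g+1)) = n^2(g-1)+1$, open in $S_0$ by Lemma \ref{genopen}, and a dimension bound on the complement (adapting the Picard/Grassmannian-bundle argument of Theorem \ref{props0}) forces density. Butler's Conjecture then follows from Theorem \ref{cort}.

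The principal obstacle is part (iii): the loss of stability in Butler's theorem at $d = 2ng$ must be recovered from the geometric hypothesis that $X$ is non-hyperelliptic, requiring a careful analysis of potential destabilising subbundles of $M_{H^0(E),E}$ — these naturally arise from pencils on $X$, and ruling them out without a $g^1_2$ is the real content. A secondary difficulty in (ii) is that surjectivity of $D$ is not a formal consequence of Butler's theorem applied in reverse; it must be deduced from the combination of the dimension count $\beta(m,d,n+m) = \dim M(n,d)$, the irreducibility of $S_0(m,d,n+m)$ via Theorem \ref{props0}, and the open-ness statement of Lemma \ref{genopen}.
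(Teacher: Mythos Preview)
Your argument for part (i) is correct and matches the paper's.

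In part (ii), however, there is a genuine gap. You invoke Theorem \ref{props0}(iii) to obtain irreducibility of $S_0(m,d,n+m)$, writing that it ``applies because $d>2ng\ge 2n(g-1)$''. But in $S_0(m,d,n+m)$ the rank is $m$, not $n$, so the relevant hypothesis is $d>2m(g-1)$; since $m=d-ng$ and $d>2ng$, one has $\mu=d/m<2$, hence $d<2m\le 2m(g-1)$ for $g\ge2$, and the hypothesis fails. (The companion hypothesis $d\ge mg+n$ of Theorem \ref{props0} also fails, and in any case that theorem assumes $X$ general, whereas Theorem \ref{mercat} is stated for an arbitrary curve.) Without irreducibility you cannot pass from $\dim T(m,d,n+m)=\beta(m,d,n+m)$ to $T(m,d,n+m)=S_0(m,d,n+m)$, so the surjectivity of $D$ and the equality $G_0(m,d,n+m)=B(m,d,n+m)$ remain unjustified. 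Your ``dual-side $\alpha$-slope argument of (i)'' does not transfer either: that argument used $h^1$-vanishing, which holds for slope $>2g-2$ but certainly not for $\mu(F)=d/m<2$. The paper proceeds quite differently: since $1<d/m<2$, Mercat's bound \cite{mer} gives $h^0(F)\le m+\frac1g(d-m)=n+m$ for every semistable $F$ of rank $m$ and degree $d$; this both identifies $G_0(m,d,n+m)$ with $B(m,d,n+m)$ (via the $\alpha$-slope argument, now legitimate because the Mercat bound applies to every semistable subbundle of the same slope) and, together with \cite[Ch.~2, Th\'eor\`eme B-1]{mer}, yields the isomorphism \eqref{mer1} directly.

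Part (iii) has the same misapplication of Theorem \ref{props0} (for rank $ng$ one would need $2ng>2ng(g-1)$, i.e.\ $g<2$), and in addition the key stability assertion --- that $M_{H^0(E),E}$ is stable for general $E\in M(n,2ng)$ when $X$ is non-hyperelliptic --- is only sketched heuristically. The paper does not attempt to prove this from scratch; instead it cites \cite[Theorem 5.4]{bgmmn2} for $B(ng,2ng,n(g+1))\ne\emptyset$ and $h^0=n(g+1)$, \cite[Proposition 2]{mer2} for the inclusion $B(ng,2ng,n(g+1))\subset T(ng,2ng,n(g+1))$, and \cite[Theorem 4.4]{bgmmn2} for the irreducibility of $S_0(ng,2ng,n(g+1))$, then concludes via Theorem \ref{cort}.
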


\begin{proof}
(i) For $d>2ng-n$, any semistable bundle $E$ of rank $n$ and degree $d$ is generated and $h^0(E)=n+m$ by Riemann-Roch. Moreover, if $E$ is strictly semistable and $E'$ is any subbundle of rank $n'$ and degree $d'$ contradicting stability, then $E'$ and $E/E'$ are also semistable, so $h^0(E')= d'+n'(1-g)$ and $h^0(E/E')=(d-d')+(n-n')(1-g)$. Hence $(E',H^0(E'))$ is a subsystem of $(E,H^0(E))$ contradicting $\alpha$-stability for all $\alpha$. This proves \eqref{eqmer2}.

(ii) For $d>2ng$, we have $1<\frac{d}m<2$, so by \cite{mer}, every semistable bundle $F$ of rank $m$ and degree $d$ has
$$h^0(F)\le m+\frac1g(d-m)=n+m.$$
Moreover, if $h^0(F)=n+m$ and $F$ is strictly semistable, the same argument as above shows that $(F,H^0(F))$ is strictly $\alpha$-semistable for all $\alpha>0$. So $G_0(m,d,n+m)=B(m,d,n+m)$.
The isomorphism \eqref{mer1} now follows from \cite[Ch 2 Th\'eor\`eme B-1]{mer} and implies that
$$S_0(m,d,n+m)=G_0(m,d,n+m).$$

(iii) By \cite[Theorem 5.4]{bgmmn2}, $B(ng,2ng,n(g+1))\ne\emptyset$ and $h^0(E)=n(g+1)$ for all $(E,V)\in G_0(ng,2ng,n(g+1))$, so we can regard $B(ng,2ng,n(g+1))$ as a non-empty open subset of $G_0(ng,2ng,n(g+1))$. By \cite[Proposition 2]{mer2}, we have $B(ng,2ng,n(g+1))\subset T(ng,2ng,n(g+1))$. Since $S_0(n,2ng,n(g+1))$ is irreducible by (i) and $S_0(ng,2ng,n(g+1))$ is irreducible by \cite[Theorem 4.4]{bgmmn2}, the result follows from Theorem \ref{cort}.
\end{proof}

\begin{remark}\label{merrem}\begin{em}
For $d>2ng$, the isomorphism \eqref{mer1} holds for every genus. If $g=0$, the spaces concerned are all empty. If $g=1$, they are non-empty if $\gcd(n,d)=1$ and empty otherwise \cite{lnell}. For $g\ge2$, the spaces are non-empty and irreducible.
\end{em}\end{remark}

\section{$T(n,d,n+m)$ and $\mathcal{P}_0(n,d,n+m)$}\label{secug}

In this section, we give some important properties of $T(n,d,n+m)$, $U_g(n,d,n+m)$ and $\mathcal{P}_0(n,d,n+m)$. In particular, those of $\mathcal{P}_0(n,d,n+m)$ will be used in the following sections.

The next theorem demonstrates the relationship between $T(n,d,n+m)$ and $U_g(n,d,n+m)$.

\begin{theorem}\label{main}
$T(n,d,n+m)\subset U_g(n,d,n+m)$.
\end{theorem}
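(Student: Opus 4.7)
The plan is to assume $(E,V) \in T(n,d,n+m)$ and show that $(E,V)$ is $\alpha$-stable for every $\alpha > 0$; since generation is built into $S_0(n,d,n+m)$, this will give $(E,V) \in U_g(n,d,n+m)$. I argue by contradiction: if $(E,V)$ fails $\alpha$-stability at some $\alpha > 0$, Remark \ref{subnoalfa} supplies a critical value $\alpha_i > 0$ and an $\alpha_i$-semistable subsystem $(F,W)$ of type $(n',d',k')$ with $\mu_{\alpha_i}(F,W) = \mu_{\alpha_i}(E,V)$, $k'/n' > (n+m)/n$ and $d'/n' < d/n$. My aim is to construct from $(F,W)$ a subsystem of $D(E,V)$ whose $0$-slope strictly exceeds $\mu_0(D(E,V)) = d/m$; by continuity this strict inequality will persist for small $\alpha > 0$, contradicting $D(E,V) \in S_0(m,d,n+m)$.

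First I would make two reductions on $(F,W)$. Enlarging $W$ to $V \cap H^0(F)$ only increases $k'$ and preserves both strict slope inequalities. Setting $F_0 := \operatorname{image}(W \otimes \mathcal{O}_X \to F)$ and letting $\widetilde{F}$ be the saturation of $F_0$ in $E$, of rank $n_0 \le n'$ and degree $d_0$, the $\alpha_i$-semistability of $(F,W)$ applied to the subsystem $(\widetilde{F},W) \subset (F,W)$ yields $d_0/n_0 \le d'/n' < d/n$ while $k'/n_0 \ge k'/n' > (n+m)/n$. After relabeling I may assume $W = V \cap H^0(F)$ and that $W$ generates $F$ generically. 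In particular $(E/F, V/W)$ is a generated coherent system of type $(n-n',d-d',n+m-k')$: the maximality of $W$ gives $V/W \hookrightarrow H^0(E/F)$, and $V$ generating $E$ forces $V/W$ to generate $E/F$.

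The core construction is to embed $D(E/F,V/W) = (M^*_{V/W,E/F},(V/W)^*)$ into $D(E,V)$. I would apply the snake lemma to the commutative diagram with exact rows
\[
\begin{array}{ccccccccc}
0 & \to & (E/F)^* & \to & (V/W)^*\otimes \mathcal{O}_X & \to & M^*_{V/W,E/F} & \to & 0 \\
& & \downarrow & & \downarrow & & \downarrow & & \\
0 & \to & E^* & \to & V^*\otimes \mathcal{O}_X & \to & M^*_{V,E} & \to & 0,
\end{array}
\]
whose first two vertical arrows are injections with cokernels $F^*$ and $W^*\otimes \mathcal{O}_X$. The snake lemma identifies the kernel of the third vertical arrow with the kernel of the induced map on cokernels $F^* \to W^*\otimes \mathcal{O}_X$, which is the dual of the evaluation $W \otimes \mathcal{O}_X \to F$; its kernel is the sheaf-dual of the cokernel of this evaluation. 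Generic generation of $F$ by $W$ makes that cokernel torsion, hence its $\mathcal{H}om$ into $\mathcal{O}_X$ vanishes and $M^*_{V/W,E/F}$ embeds as a subbundle of $M^*_{V,E}$, giving a genuine subsystem of $D(E,V)$.

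Finally I would compute
\[
\mu_0\bigl(M^*_{V/W,E/F},(V/W)^*\bigr) - \mu_0\bigl(D(E,V)\bigr) = \frac{d(k'-n') - md'}{m(m+n'-k')}.
\]
The denominator is positive because $m \ge 1$ and $k' \le n'+m-1$ by Remark \ref{rem1g}. The destabilizing inequalities give $k'-n' > n'm/n$ and $d' < n'd/n$, while Remark \ref{remvarios}(4) gives $d > 0$; hence $d(k'-n') > dmn'/n > md'$ and the numerator is strictly positive. This supplies the required contradiction. The step I expect to require the most care is the reduction to an $(F,W)$ with $W$ generically generating $F$: without this hypothesis the cokernel of $W \otimes \mathcal{O}_X \to F$ has positive rank, so the snake-lemma kernel is nonzero and $D(E/F,V/W)$ does not sit inside $D(E,V)$ as a subsystem in the natural way.
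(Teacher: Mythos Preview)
Your argument is essentially correct and reaches the same contradiction as the paper, but via the complementary piece of $M^*_{V,E}$. One small inaccuracy: the snake-lemma computation shows that $M^*_{V/W,E/F}\to M^*_{V,E}$ is injective, not that its image is saturated. The cokernel of this map is $(W^*\otimes\mathcal{O}_X)/F^*$, which contains the torsion sheaf $\mathcal{E}xt^1(T,\mathcal{O}_X)$, where $T$ is the (possibly nonzero) torsion cokernel of $W\otimes\mathcal{O}_X\to F$; generic generation gives $T^\vee=0$ but not $T=0$. This is harmless for the proof: replacing $M^*_{V/W,E/F}$ by its saturation in $M^*_{V,E}$ only raises the degree, so your strict $\mu_0$-inequality persists and the contradiction stands.

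The paper's route is shorter. Instead of embedding the dual span of the quotient $(E/F,V/W)$ into $D(E,V)$, it observes directly that the kernel $N$ of $W\otimes\mathcal{O}_X\to F$ sits inside $M_{V,E}$ (and is in fact saturated there, since $M_{V,E}/N$ injects into the locally free $M_{V/W,E/F}$). A two-line slope estimate, using only $d'/n'\le d/n$ from the semistability of $E$ together with $k'/n'>(n+m)/n$, then gives $\mu(N)>\mu(M_{V,E})$, so $M^*_{V,E}$ is not semistable and $D(E,V)\notin S_0(m,d,n+m)$. This avoids both of your preliminary reductions: one never needs to enlarge $W$ to $V\cap H^0(F)$ (since the quotient $(E/F,V/W)$ is never formed), and the passage to the saturation of the image of $W$ does not require the $\alpha_i$-semistability of $(F,W)$, because the needed bound on $d'/n'$ comes directly from the semistability of $E$. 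Your approach has the merit of producing an explicit destabilising coherent subsystem of $D(E,V)$ rather than merely a destabilising subbundle of $M^*_{V,E}$, but since only the $\mu_0$-slope enters the final inequality, that extra structure is not used.
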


\begin{proof}
Suppose that $(E,V)\in T(n,d,n+m)$. If $(E,V)\not\in U_g(n,d,n+m)$, then (see Remark \ref{subnoalfa}) it possesses a proper coherent subsystem  $(F,W)$ of type $(n',d',k')$ such that
\begin{equation}\label{eq02}
\frac{k'}{n'} >1+\frac{m}{n}.
 \end{equation}

We can certainly assume that $(F,W)$ is  generically generated. The kernel $N$ of the evaluation map $W\otimes \mathcal{O}_X\to F$ then has rank $k'-n'$ and degree $-(d'-\tau)$ for some $\tau\ge0$. Note that $N$ is a subbundle of $M_{{V,E}}$.

 Since $E$ is semistable, we have $$\frac{d'-\tau}{n'}\leq \frac{d'}{n'}\leq \frac{d}{n}.$$
  From this inequality and \eqref{eq02}, we have

 \begin{equation}\label{eq03}
 \mu (N^*) =\frac{d'-\tau}{k'-n'}\leq \frac{n'd}{n(k'-n')} < \frac{n'd}{n'm}= \mu (M^*_{{V,E}}).
 \end{equation}

We conclude from (\ref{eq03}) that $\mu (M_{{V,E}}) < \mu(N)$, hence that $M_{V,E}$ is not semistable, so $D(E,V)\notin S_{0}(m,d,n+m)$, contradicting the hypothesis that $(E,V)\in T(n,d,n+m)$. Therefore $(E,V)\in U_g(n,d,n+m)$ as claimed.
\end{proof}

\begin{remark}\label{remdim1}\begin{em}
This theorem suggests a possible method of showing that $T(n,d,n+m)\ne\emptyset$ using the large-$\alpha$ moduli space $G_L(n,d,n+m)$, at least when $\gcd(n,m)=1$. In this case, by \cite[Proposition 5.9]{bgmn}, the dual span defines an injective morphism
\begin{equation}\label{equg}
D: U_g(n,d,n+m) \to G_L(m,d,n+m).
\end{equation}
Suppose further that $U_g(n,d,n+m)\ne\emptyset$, $\dim G_L(n,d,n+m)=\beta(n,d,n+m)$ and  $U_g(m,d,n+m)$ is dense in $G_L(m,d,n+m)$. Then the image of the morphism \eqref{equg} contains an open set in $G_L(m,d,n+m)$ which meets $U_g(m,d,n+m)$ and it follows that $T(n,d,n+m)\ne\emptyset$. We propose to investigate this possibility in the future.
\end{em}\end{remark}

\begin{proposition}\label{dimcomp} Suppose $d>(\frac{2nm}{n+m})(g-1)$ and $T(n,d,n+m)$ is non-empty. Then $T(n,d,n+m)$ is smooth of the expected dimension $\beta(n,d,n+m)$. Hence $U_g(n,d,n+m)$ and $U_g(m,d,n+m)$ have an irreducible component of dimension $ \beta(n,d,n+m)=\beta(m,d,n+m)$.
\end{proposition}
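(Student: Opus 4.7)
The plan is to reduce smoothness at every $(E,V)\in T(n,d,n+m)$ to the vanishing of an explicit $\Hom$-group, and then to close via a slope comparison. By the discussion surrounding \eqref{eqpetri}, it suffices to prove that the Petri map is injective at each such $(E,V)$. Tensoring the defining sequence $0\to M_{V,E}\to V\otimes\mathcal{O}_X\to E\to 0$ with $E^*\otimes K_X$ is exact (since $E^*\otimes K_X$ is locally free) and, upon taking global sections, identifies the kernel of the Petri map with
$$
H^0(M_{V,E}\otimes E^*\otimes K_X)\cong\Hom(E,M_{V,E}\otimes K_X),
$$
so the problem reduces to showing this $\Hom$-group vanishes.

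Next I would verify semistability of $E$ and of $M_{V,E}$. If $F\subsetneq E$ destabilised $E$, then $(F,0)$ would be a proper subsystem of $(E,V)$ with $\mu_\alpha(F,0)=\mu(F)>\mu(E)+\alpha(n+m)/n=\mu_\alpha(E,V)$ for every sufficiently small $\alpha>0$, contradicting $(E,V)\in S_0(n,d,n+m)$. The same argument applied to $D(E,V)=(M_{V,E}^*,V^*)\in S_0(m,d,n+m)$ gives semistability of $M_{V,E}^*$, hence of $M_{V,E}$. A nonzero $\phi\colon E\to M_{V,E}\otimes K_X$ would then have image $I$ that is simultaneously a torsion-free quotient of $E$ and a subsheaf of $M_{V,E}\otimes K_X$, so the two semistabilities would force
$$
\frac{d}{n}=\mu(E)\le\mu(I)\le\mu(M_{V,E}\otimes K_X)=-\frac{d}{m}+2g-2.
$$
Rearranging yields $d\le\tfrac{2nm}{n+m}(g-1)$, contradicting the hypothesis. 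Hence $\Hom(E,M_{V,E}\otimes K_X)=0$ and $T(n,d,n+m)$ is smooth of dimension $\beta(n,d,n+m)$ at every point.

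For the statement about $U_g$, Theorem \ref{main} and Lemma \ref{genopen} give the chain $T(n,d,n+m)\subset U_g(n,d,n+m)\subset S_0(n,d,n+m)$ with $T(n,d,n+m)$ open in $S_0(n,d,n+m)$; hence $T(n,d,n+m)$ is open in $U_g(n,d,n+m)$. Any irreducible component of $U_g(n,d,n+m)$ meeting $T(n,d,n+m)$ thus contains a nonempty open subset of $T(n,d,n+m)$ and so is the closure of an open piece of a $\beta$-dimensional smooth scheme, giving it dimension $\beta(n,d,n+m)$. The isomorphism $D\colon T(n,d,n+m)\to T(m,d,n+m)$ from Proposition \ref{propt}, together with the direct verification from \eqref{eq21} that $\beta(n,d,n+m)=\beta(m,d,n+m)$, transfers the same conclusion to $U_g(m,d,n+m)$. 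The only real subtlety is extracting semistability of both $E$ and $M_{V,E}$ from small-$\alpha$-stability of $(E,V)$ and of $D(E,V)$ simultaneously; once this is in hand, the slope bound on $d$ is tight and the remainder of the argument is formal.
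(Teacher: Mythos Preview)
Your proof is correct and follows essentially the same route as the paper: identify the kernel of the Petri map with $H^0(M_{V,E}\otimes E^*\otimes K_X)$ via the defining sequence, use semistability of both $E$ and $M_{V,E}$ (coming from $(E,V)\in S_0$ and $D(E,V)\in S_0$) together with the slope hypothesis to kill this group, and then invoke Theorem~\ref{main}, Lemma~\ref{genopen}, and Proposition~\ref{propt} for the $U_g$ statement. The only cosmetic difference is that the paper phrases the vanishing via the negative slope of the semistable tensor product $M_{V,E}\otimes E^*\otimes K_X$, whereas you recast it as a $\Hom$-group and compare slopes of source and target directly; these are equivalent.
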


\begin{proof} Let $(E,V)\in T(n,d,n+m)$. From the sequence \eqref{eq0}, it follows that the kernel of the Petri map \eqref{eqpetri} is $H^0(M_{V,E}\otimes E^*\otimes K_X)$. Since
$$\mu (M_{V,E}\otimes E^*\otimes K_X)=-\frac{d}m-\frac{d}n+2g-2<0$$
and $E$ and $M_{V,E}$ are both semistable, it follows that $H^0(M_{V,E}\otimes E^*\otimes K_X)=0$.
So the Petri map is injective and $T(n,d,n+m)$ is smooth of dimension $\beta(n,d,n+m)$ as claimed. The same holds for $T(m,d,n+m)$ by Proposition \ref{propt}. Since $T(n,d,n+m)$ is open in $U_g(n,d,n+m)$ by Lemma \ref{genopen} and Theorem \ref{main}, the last part of the proposition follows.
\end{proof}



In order to determine the truth or falsity of Butler's Conjecture, it is important to investigate cases in which $(E,V)\in S_0(n,d,n+m)$, but $D(E,V)\notin S_0(m,d,n+m)$.

\begin{lemma}\label{lemma1}
Let $(E,V)\in S_0(n,d,n+m)$. If $(E,V)$ has a  subsystem $(G,W)$ of type $(s,d',s+t)$ such that
$\frac{s}{t}\le \frac{n}{m}$, then $D(E,V)\notin S_0(m,d,n+m)$.
\end{lemma}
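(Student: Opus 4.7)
The strategy mirrors that of Theorem~\ref{main}: use the given subsystem $(G,W)$ of $(E,V)$ to produce a proper subsystem of $D(E,V)=(M^*_{V,E},V^*)$ whose $\alpha$-slope (for small $\alpha>0$) weakly exceeds that of $D(E,V)$, contradicting $\alpha$-stability.

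I would begin with a reduction: replace $G$ by the subbundle of $E$ saturating the image of $W\otimes\mathcal{O}_X$. This can only decrease $s$ and increase $t=\dim W-s$, so the hypothesis $s/t\le n/m$ is preserved, and we may assume $W$ generates $G$. Then $(E/G, V/W)$ is a generated coherent system of type $(n-s, d-d', n+m-s-t)$, and its defining sequence fits into a commutative square with the defining sequence of $M_{V,E}$. A snake-lemma argument produces a surjection $M_{V,E}\twoheadrightarrow M'$, where $M':=M_{V/W, E/G}$ has rank $m-t$ and degree $-(d-d')$, with kernel of rank $t$ and degree $-d'$. Dualizing, $(M')^*\hookrightarrow M^*_{V,E}$ is a subbundle of rank $m-t$ and degree $d-d'$. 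Since $(E,V)\in S_0$ implies $h^0(E^*)=0$, and $(E/G)^*\hookrightarrow E^*$, we obtain $h^0((E/G)^*)=0$, so the dual span construction applied to $(E/G,V/W)$ identifies $W^\perp=(V/W)^*$ with a subspace of $H^0((M')^*)$; thus $((M')^*, W^\perp)$ is a subsystem of $D(E,V)$ of type $(m-t, d-d', n+m-s-t)$.

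The next step is to check properness and the slope inequality. For properness, I would note that $t>0$ follows from $s/t\le n/m$ (otherwise the subsystem is trivial), while $t<m$ follows because $(E,V)$ generated forces $(V/W)\otimes\mathcal{O}_X\to E/G$ to be surjective, and if $t=m$ one uses $h^0(E^*)=0$ to deduce $G=E$ and $W=V$, contradicting properness. For the slope comparison, I would first invoke $(E,V)\in G_0$ applied to subsystems of the form $(G,0)$ to conclude $E$ is semistable, so $nd'\le sd$; combining with $sm\le tn$ then yields $(d-d')/(m-t)\ge d/m$. A similar arithmetic manipulation of $sm\le tn$ gives $(n+m-s-t)/(m-t)\ge(n+m)/m$. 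Assembling the two shows $\mu_\alpha((M')^*, W^\perp)\ge \mu_\alpha(D(E,V))$ for all $\alpha\ge 0$, violating $\alpha$-stability of $D(E,V)$ for small $\alpha>0$; hence $D(E,V)\notin S_0(m,d,n+m)$.

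The main technical obstacle I anticipate is the bookkeeping in the reduction step and the verification of properness, especially the boundary cases where $W$ fails to generate $G$ or where $t$ approaches $m$. Once these points are settled and the semistability of $E$ is in hand, the final slope inequalities are a routine manipulation of the hypothesis $s/t\le n/m$.
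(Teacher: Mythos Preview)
Your approach is essentially correct and is the natural dual of the paper's argument, but there is a small gap in the reduction step. Replacing $G$ by the saturation in $E$ of the image of $W\otimes\mathcal{O}_X$ yields a subbundle that $W$ \emph{generically} generates, not one that $W$ generates: the saturation may strictly contain the image $G_0$. Consequently the snake lemma does not give a surjection $M_{V,E}\twoheadrightarrow M'$; the cokernel is the torsion sheaf $G/G_0$ of length $\tau\ge0$. The fix is easy: the kernel $K$ of $W\otimes\mathcal{O}_X\to G$ is still a subbundle of $M_{V,E}$ of rank $t$ and degree $-(d'-\tau)$, and one may use the subsystem $((M_{V,E}/K)^*,W^\perp)$ of type $(m-t,\,d-d'+\tau,\,n+m-s-t)$ in place of $((M')^*,W^\perp)$; the extra $\tau$ only strengthens the slope inequality, and your verification that $W^\perp$ lands in $H^0((M_{V,E}/K)^*)$ goes through since $K\subset W\otimes\mathcal{O}_X$. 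With this adjustment your argument is complete.

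The paper takes the complementary route. Rather than passing to the quotient of $M_{V,E}$, it works directly with the subbundle $K\subset M_{V,E}$ (called $M$ there) and dualises to obtain a \emph{quotient} coherent system $(K^*,W^*)$ of $D(E,V)$ of type $(t,d'-\tau,s+t)$. Assuming $M_{V,E}^*$ semistable yields $(d'-\tau)/t\ge d/m$; combining this with semistability of $E$ and the hypothesis $s/t\le n/m$ forces $\mu(E)\le\mu(G)$, a contradiction unless every inequality is an equality, in which case $(K^*,W^*)$ has the same $\alpha$-slope as $D(E,V)$ for all $\alpha$. The two proofs are mirror images: your destabilising subsystem is precisely the kernel of the paper's destabilising quotient. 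The paper's version is marginally cleaner because it needs only generic generation from the start and bypasses the snake-lemma bookkeeping, while your version makes the role of the dual span of the quotient $(E/G,V/W)$ more explicit.
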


\begin{proof} We can certainly assume that $(G,W)\subset (E,V)$ is a generically generated subsystem of type $(s,d',s+t)$. The kernel of the evaluation map $W\otimes \mathcal{O}_X\to G$ defines a subbundle $M$ of $ M_{V,E}$ of slope $\mu (M) =-\frac{d'-\tau }{t}$.
Suppose, contrary to our claim, that $(M_{V,E}^*,V^*)\in S_0(m,d,n+m)$. As $ M_{V,E}^*$ is semistable, we have $$\frac{d}{m}=\mu ( M_{V,E}^*)\leq \mu (M^*)=\frac{d'-\tau}{t}\leq \frac{d'}{t}.$$ Hence $\frac{d}{d'}\leq \frac{m}{t}$, but by hypothesis
$$\frac{d}{d'}\leq
\frac{m}{t}\le\frac{n}{s}.
$$
Thus $\mu(E)=\frac{d}{n}\le\frac{d'}{s}=\mu (G)$.
This contradicts the semistability of $E$ unless all the inequalities are equalities. But, in the latter case, $\frac{s}t=\frac{n}m$ and $(M^*,W^*)$ is a quotient coherent system of $D(E,V)$ of type $(t,d',s+t)$ with $\frac{d'}t=\frac{d}m$, which contradicts the $\alpha$-stability of $D(E,V)$ for all $\alpha>0$.
\end{proof}

By a {\it subpencil} of a coherent system $(E,V)$, we mean a subsystem $(L,W)$ of type $(1,d',2)$. If $(E,V)$ admits a subpencil, we say also that $E$ admits a subpencil. Denote by $\mathcal{P}(\alpha;n,d,n+m)\subset G(\alpha;n,d,n+m)$ the locus of generated coherent systems in $ G(\alpha;n,d,n+m)$ that admit subpencils. As usual, we write also $\mathcal{P}_0(n,d,n+m)$ when $\alpha>0$ is small.

\begin{lemma}\label{pencil} $\mathcal{P}(\alpha;n,d,n+m)$ is closed in $S(\alpha;n,d,n+m)$.
\end{lemma}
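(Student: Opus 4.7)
The plan is to exhibit $\mathcal{P}(\alpha;n,d,n+m)$ as the image, under a proper morphism, of a closed subscheme, from which closedness in $S(\alpha;n,d,n+m)\subset G(\alpha;n,d,n+m)$ is immediate. Assume first that $\gcd(n,d,n+m)=1$, so there is a universal family $(\mathcal{E},\mathcal{V})$ on $G\times X$ with $G:=G(\alpha;n,d,n+m)$ (as recalled in the proof of Lemma~\ref{genopen}), $\mathcal{V}$ locally free of rank $n+m$ on $G$. Form the Grassmannian bundle $\pi\colon G_2:=\Grass(2,\mathcal{V})\to G$ with its tautological rank-$2$ subbundle $\mathcal{W}\hookrightarrow\pi^{*}\mathcal{V}$. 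Set $\tilde\pi:=\pi\times\mathrm{id}_X$ and let $q\colon G_2\times X\to G_2$ be the projection. Composing the pulled-back inclusion with the pulled-back evaluation $p_G^{*}\mathcal{V}\to\mathcal{E}$ yields a universal evaluation $\operatorname{ev}\colon q^{*}\mathcal{W}\to\tilde\pi^{*}\mathcal{E}$. Taking second exterior powers produces a morphism $q^{*}(\Lambda^{2}\mathcal{W})\to\tilde\pi^{*}(\Lambda^{2}\mathcal{E})$, equivalently a global section $\sigma$ of the vector bundle $\tilde\pi^{*}(\Lambda^{2}\mathcal{E})\otimes q^{*}(\Lambda^{2}\mathcal{W})^{*}$ on $G_2\times X$; at a closed point $(((E,V),W),x)$, $\sigma$ vanishes iff $\operatorname{ev}_W\colon W\otimes\mathcal{O}_X\to E$ has rank $\le 1$ at $x$.

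Let $Z\subset G_2\times X$ be the zero locus of $\sigma$ and put $Y:=\{y\in G_2 \ |\ \{y\}\times X\subset Z\}$. Since $p_1\colon G_2\times X\to G_2$ is proper (as $X$ is projective), so is its restriction $p_1|_Z\colon Z\to G_2$; upper semi-continuity of fibre dimension then implies that $\{y\in G_2:\dim(Z_y)\ge 1\}$ is closed, and this set coincides with $Y$ because $X$ is irreducible of dimension one. A point $((E,V),W)\in G_2$ lies in $Y$ exactly when $\operatorname{ev}_W$ has rank $\le 1$ at every point of $X$, i.e., when the image of $W\otimes\mathcal{O}_X\to E$ is a rank-$1$ subsheaf; its saturation is then a line subbundle $L\subset E$ with $W\subset H^0(L)\cap V$, giving a subpencil $(L,W)$ of $(E,V)$, and conversely every subpencil arises in this way.

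Consequently $\pi(Y)\subset G$ is precisely the locus of coherent systems that admit a subpencil, and it is closed because $\pi$ is proper. Therefore $\mathcal{P}(\alpha;n,d,n+m)=\pi(Y)\cap S(\alpha;n,d,n+m)$ is closed in $S(\alpha;n,d,n+m)$. When $\gcd(n,d,n+m)>1$ no global universal family exists, but one is available \'etale locally on $G$, and since closedness is \'etale-local on the target the whole argument descends. The main technical point to verify is the equivalence between the pointwise rank-$\le\!1$ condition on all of $X$ and the genuine existence of a subpencil; this rests on the fact that any rank-$1$ torsion-free subsheaf of $E$ on a smooth curve saturates to a line subbundle.
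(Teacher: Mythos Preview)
Your proof is correct, but takes a different route from the paper's. The paper parametrizes potential subpencils externally: for each degree $e$ it considers the product $G(1,e,2)\times S(\alpha;n,d,n+m)$, uses semicontinuity of $\Hom$ between families of coherent systems to obtain a closed locus $Z_e$ where a nonzero morphism $(L,W)\to(E,V)$ exists, pushes this forward by the proper projection to $S(\alpha;n,d,n+m)$, and then takes the finite union over the possible values of $e$. You instead parametrize internally via the Grassmannian bundle $\Grass(2,\mathcal{V})\to G$, detect the subpencil condition by the vanishing of $\Lambda^2$ of the evaluation map along the whole fibre $X$, and push forward once by the proper structure map of the Grassmannian bundle. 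Your approach is a bit more economical in that it avoids the union over degrees and needs no boundedness argument for $e$; the paper's approach is closer to the definition of a subpencil as a coherent subsystem and invokes the ready-made semicontinuity of $\Hom$ for coherent systems rather than constructing a determinantal locus by hand. Both handle the non-coprime case by the same \'etale-local reduction.
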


\begin{proof} Working
locally in the \'etale topology if necessary, we can assume without loss of generality that there is a family $(\mathcal{E}, \mathcal{V})$ of coherent systems parameterized by $S(\alpha;n,d,n+m)$. Let $(\mathcal{L}, \mathcal{W})$ be the family of linear systems of type $(e,2)$ parameterized by $G(1,e,2)$. Take the pull-back of the families $(\mathcal{E}, \mathcal{V})$ and $(\mathcal{L}, \mathcal{W})$ to the product
$$G(1,e,2) \times S(\alpha;n,d,n+m)\times X.$$
We conclude, from the semicontinuity of $\operatorname{Hom}$, that the support  $Z_e\subset G(1,e,2) \times S(\alpha;n,d,n+m)$ of  $p_{12*}(\it{Hom}(p_{13}^*(\mathcal{L}, \mathcal{W}),p_{23}^*(\mathcal{E}, \mathcal{V}))) $
is closed, hence that $p _{2}(Z_e)\subset S(\alpha;n,d,n+m) $ is closed, since the projection $p _2:G(1,e,2) \times S(\alpha;n,d,n+m)\to  S(\alpha;n,d,n+m) $ is proper. Since there are finitely many possible choices for $e$ for which $Z_e\ne\emptyset$,
we conclude that $\mathcal{P}(\alpha;n,d,n+m)= \bigcup _e  p _{2}(Z_e)$ is closed.
\end{proof}

\begin{proposition}\label{segrerk2} Let $X$ be a general curve. If $d\le n\left\lfloor\frac{g+3}2\right\rfloor$ and $m\le n$, then
$\mathcal{P}_0(n,d,n+m) = \emptyset $.
\end{proposition}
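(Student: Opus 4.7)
The plan is to argue by contradiction. Suppose $(E,V)\in\mathcal{P}_0(n,d,n+m)$, so that there is a subpencil $(L,W)\subset(E,V)$ of type $(1,d',2)$. Since $W\subseteq H^0(L)\cap V$ has dimension $2$, in particular $h^0(L)\ge 2$.

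The first step is to extract an upper bound for $d'$ from the $\alpha$-stability of $(E,V)$ for small $\alpha>0$. The inequality $\mu_\alpha(L,W)<\mu_\alpha(E,V)$ expands to
\[
d'-\frac{d}{n}+\frac{n-m}{n}\,\alpha<0.
\]
Letting $\alpha\to 0^+$ gives $d'\le d/n$, and in the boundary case $d'=d/n$ one needs the coefficient of $\alpha$ to be strictly negative, i.e.\ $m>n$. Under the hypothesis $m\le n$ this boundary case is excluded, so $d'<d/n$ strictly. Combined with $d\le n\lfloor(g+3)/2\rfloor$ and the integrality of $d'$, this yields
\[
d'\le \left\lfloor \frac{g+3}{2}\right\rfloor -1.
\]

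The second step is to contradict this bound via classical Brill-Noether theory on the general curve $X$. Since $h^0(L)\ge 2$, the line bundle $L$ represents a point of $W^1_{d'}(X)$; on a Brill-Noether general curve, non-emptiness of $W^1_{d'}(X)$ requires $\rho(g,1,d')=2d'-g-2\ge 0$, equivalently $d'\ge\lceil(g+2)/2\rceil=\lfloor(g+3)/2\rfloor$. This contradicts the inequality just obtained, so no such subpencil can exist and $\mathcal{P}_0(n,d,n+m)$ is empty.

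The proof is direct; the only delicate point, rather than a genuine obstacle, is the borderline case $d'=d/n$ when $n\mid d$. There strictness in $\alpha$-stability must come from the coefficient of $\alpha$, which forces $m>n$. This is precisely why the hypothesis $m\le n$ (together with the degree bound matching the classical Brill-Noether threshold for pencils) is the exact condition needed for the argument to close.
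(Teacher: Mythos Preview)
Your proof is correct and follows essentially the same approach as the paper: both use the $\alpha$-stability of $(E,V)$ for small $\alpha$ together with classical Brill-Noether theory on a general curve to rule out a rank-one subpencil. The only cosmetic difference is in the order of the steps: the paper first bounds $d'\le d/n$ via semistability of $E$ and $d'\ge\lfloor(g+3)/2\rfloor$ via Brill-Noether, forcing all inequalities to be equalities, and then invokes the $\alpha$-term with $m\le n$ to contradict $\alpha$-stability; you instead use the $\alpha$-term from the outset (together with $m\le n$) to obtain the strict inequality $d'<d/n$, and then derive the contradiction with Brill-Noether directly.
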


\begin{proof} Let $(E,V)\in S_0(n,d,n+m)$ and let $(L,W)$ be a subsystem of type $(1,d',s)$. By classical Brill-Noether theory, $\left\lfloor\frac{g+3}2\right\rfloor\leq d'$ if
$s\geq 2 $. But, from the semistability of $E$ and the hypothesis,
$$\left\lfloor\frac{g+3}2\right\rfloor\leq d'\leq \frac{d}{n} \le\left\lfloor\frac{g+3}2\right\rfloor.$$
This is possible only if all the inequalities are equalities. So $(L,W)\in G(1,\frac{d}n,2)$, which contradicts the $\alpha$-stability of $(E,V)$ for all $\alpha>0$. Therefore $s\leq 1$, which proves the proposition.
\end{proof}

\begin{proposition}\label{coropencil}If $m\le n$, then
$${\mathcal P}_0(n,d,n+m)\cap T(n,d,n+m)=\emptyset.$$
 \end{proposition}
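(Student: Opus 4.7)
The plan is to reduce this statement directly to Lemma \ref{lemma1}. Suppose $(E,V)\in\mathcal{P}_0(n,d,n+m)$. By definition of $\mathcal{P}_0$, the coherent system $(E,V)$ admits a subpencil, namely a subsystem $(L,W)$ of type $(1,d',2)$. In the notation of Lemma \ref{lemma1} this is a subsystem of type $(s,d',s+t)$ with $s=1$ and $s+t=2$, hence $t=1$.

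The key observation is that the ratio condition $\frac{s}{t}\le\frac{n}{m}$ required by Lemma \ref{lemma1} becomes $1\le\frac{n}{m}$, which is exactly the hypothesis $m\le n$. Therefore Lemma \ref{lemma1} applies and yields $D(E,V)\notin S_0(m,d,n+m)$, so $(E,V)\notin T(n,d,n+m)$. Since this holds for every $(E,V)\in\mathcal{P}_0(n,d,n+m)$, we conclude that $\mathcal{P}_0(n,d,n+m)\cap T(n,d,n+m)=\emptyset$, as desired.

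There is no real obstacle here, since Lemma \ref{lemma1} has already been established and does all the heavy lifting (namely, exploiting the semistability of $E$ together with the semistability that would be required of $M_{V,E}^*$ to reach a contradiction with the $\alpha$-stability of $D(E,V)$ for all $\alpha>0$). The only thing to verify is the bookkeeping of the type $(s,s+t)=(1,2)$ of a subpencil, which makes the inequality $\frac{s}{t}\le\frac{n}{m}$ equivalent to $m\le n$. Hence the proof is essentially a one-line reduction.
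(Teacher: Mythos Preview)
Your proof is correct and is exactly the paper's approach: the paper's proof reads in its entirety ``This follows from Lemma \ref{lemma1},'' and you have simply spelled out the immediate verification that a subpencil has type $(s,d',s+t)=(1,d',2)$, so that $\frac{s}{t}=1\le\frac{n}{m}$ is precisely the hypothesis $m\le n$.
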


 \begin{proof} This follows from Lemma \ref{lemma1}.
\end{proof}

Proposition \ref{coropencil} implies that, if $m\le n$ and $\mathcal{P}_0(n,d,n+m)$ contains a complete irreducible component of $S_0(n,d,n+m)$, then Butler's Conjecture must fail. On the other hand, if $\mathcal{P}_0(n,d,n+m)\ne\emptyset$ and $\dim\mathcal{P}_0(n,d,n+m)<\beta(n,d,n+m)$, then $S_0(n,d,n+m)\ne\emptyset$ and $\mathcal{P}_0(n,d,n+m)$ contains no irreducible component of $S_0(n,d,n+m)$. We shall find examples of this in Sections \ref{sec2d4} and \ref{secg6} which allow us to prove Butler's Conjecture in some cases.



\section{Coherent systems of type $(2,d,4)$}\label{sec2d4}

In this section, we consider existence problems for coherent systems of type $(2,d,4)$ and make some deductions concerning Butler's Conjecture. One may note that this is the first unknown case for the conjecture.


We can certainly assume that $d\geq 2+\frac{2g}{3}$,  since otherwise, by  Theorem \ref{beta}, Butler's conjecture is trivially fulfilled. However, as we shall see, we have substantially stronger results in this case.

\begin{proposition}\label{proposition2} Let $X$ be any curve. Then
\begin{equation}\label{eqequal}
U_g(2,d,4)=S_0(2,d,4).
\end{equation}
\end{proposition}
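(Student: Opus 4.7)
The inclusion $U_g(2,d,4)\subseteq S_0(2,d,4)$ is immediate from the definitions, so the plan is to establish the reverse inclusion: every generated, small-$\alpha$-stable coherent system of type $(2,d,4)$ is in fact $\alpha$-stable for all $\alpha>0$. I would argue by contradiction. Suppose $(E,V)\in S_0(2,d,4)$ is not in $U_g(2,d,4)$. Since $(E,V)\in G_0(2,d,4)$, it is $\alpha$-stable for $\alpha>0$ small, but by assumption it is not $\alpha$-stable for all $\alpha>0$.

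Applying Remark \ref{subnoalfa}, I would extract a coherent subsystem $(F,W)\subset(E,V)$ of type $(n',d',k')$ and a critical value $\alpha_i>0$ at which $(E,V)$ is strictly $\alpha_i$-semistable via $(F,W)$, subject to the strict inequalities
$$\frac{k'}{n'}>\frac{k}{n}=2,\qquad \frac{d'}{n'}<\frac{d}{n}=\frac{d}{2}.$$
Since $n'\le 2$ and $k'\le k=4$, the first inequality forces $n'=1$ (otherwise $k'>4$), and then $k'\ge 3$.

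The finishing step uses the generated hypothesis together with $h^0(E^*)=0$, which holds by Remark \ref{remvarios}(4) because $(E,V)$ is $\alpha$-stable for small $\alpha>0$ with $n=2\ge2$ and $k=4\ge1$. With these hypotheses, Remark \ref{rem1g} applies to the rank-one subsystem $(F,W)$ and yields $k'\le m=2$, directly contradicting $k'\ge 3$.

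The argument is essentially mechanical once the two remarks are in place: the only point requiring care is the numerology that, for $(n,m)=(2,2)$, the constraints $k'/n'>k/n$ and $k'\le m$ are incompatible for the unique available choice $n'=1$, and there is no real obstacle. It is worth recording that this phenomenon is special to the value $n=m=2$; for larger $(n,m)$ the destabilizing subsystem has more room to exist, which is why the analogous equality $U_g(n,d,n+m)=S_0(n,d,n+m)$ should not be expected in general.
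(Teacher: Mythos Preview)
Your proof is correct and hinges on the same key fact as the paper's: any rank-one subsystem of a generated $(E,V)\in S_0(2,d,4)$ has $k'\le m=2$ by Remark \ref{rem1g}. The paper argues slightly more directly, checking every rank-one subsystem $(L,W)$ head-on (using $d'\le d/2$ from semistability of $E$ together with $k'\le 2$, with small-$\alpha$ stability ruling out simultaneous equality) rather than passing through Remark \ref{subnoalfa} and a contradiction, but the substance is the same.
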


\begin{proof} Certainly $U_g(2,d,4)\subset S_0(2,d,4)$. For the reverse inclusion, let $(E,V)\in S_0(2,d,4)$ and let $(L,W)$ be a subsystem of $(E,V)$ of type $(1,d',k')$. Since $E$ is semistable, $d'\le \frac{d}2$ and, by Remark \ref{rem1g}, $k'\leq 2$. If $d'=\frac{d}2$ and $k'=2$, then $(L,W)$ contradicts the assumption that $(E,V)\in S_0(2,d,4)$. Hence, either $d'<\frac{d}2$ or $k'<2$ and
$$\mu _{\alpha} (L,W)<\mu _{\alpha} (E,V)$$
for all $\alpha >0$. This proves that $S_0(2,d,4)=U_g(2,d,4)$.
\end{proof}

\begin{theorem}\label{p3.1} Let $X$ be a general curve of genus $g\ge3$.
 \begin{enumerate}
\item[(i)] If $d\le g+2$ and $g$ is odd, or $d\le g+1$ and $g$ is even, then $G_0(2,d,4)=\emptyset$.
\item[(ii)] If $d\ge g+4$ or $g\ge4$, $d\ge g+3$, then $G_0(2,d,4)\ne\emptyset$.
\item[(iii)] If $d= g+3$ and $g$ is odd, or $d=g+2$ and $g$ is even, then $\mathcal{P}_0(2,d,4)=\emptyset$ and $S_0(2,d,4)=G_0(2,d,4)$; moreover, if $(E,V)\in G_0(2,d,4)$, then $E$ is stable.
\item[(iv)] If $d\ge 2g+2$, then $S_0(2,d,4)\ne\emptyset$.
\item[(v)] If $d>4g-4$, then $S_0(2,d,4)$ is irreducible and birational to $\Grass(4,\mathcal{W})$.
\end{enumerate}
\end{theorem}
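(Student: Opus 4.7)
I would dispose of parts (iv) and (v) immediately from Theorem \ref{props0}: (iv) is the case $n=m=2$ of Theorem \ref{props0}(i)--(ii), where the hypothesis $d \geq 2g+2$ is precisely $d \geq ng+m$, and (v) is the case $n=m=2$ of Theorem \ref{props0}(iii), where $d > 4g-4$ coincides with $d > 2n(g-1)$.

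For (i), the plan is to apply Lemma \ref{lcliff} with $n=m=2$, giving $d \geq \min\{4 + 2\Cliff_2(X), 2g-1\}$ for any $(E,V) \in G_0(2,d,4)$. Using $\Cliff_2(X) = \lfloor(g-1)/2\rfloor$ for a general curve (Remark \ref{r26}), this bound becomes $d \geq g+3$ for odd $g \geq 5$ and $d \geq g+2$ for even $g \geq 4$; the contrapositive yields (i) in these cases. The case $g=3$ (where Lemma \ref{lcliff} does not apply) would be handled directly: for $(E,V) \in G_0(2,d,4)$ with $d \leq 5$, the bundle $K_X \otimes E^*$ is semistable of rank $2$ and degree $8-d$, and $h^0(K_X \otimes E^*) = h^1(E) \geq 8-d$ by Serre duality. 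The classical Clifford inequality for rank-$2$ semistable bundles then rules out $d \leq 4$ immediately, and the boundary case $d=5$ is excluded using the fact that the corresponding Brill--Noether locus is empty on a general genus-$3$ curve.

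For (ii), existence in the stated ranges follows from the Brill--Noether theory for rank-$2$ coherent systems on general curves (cf. \cite{T2,mon}): in both ranges the Brill--Noether number $\beta(2,d,4) = 4d - 4g - 11$ is positive, and the cited results yield a non-empty component of $G_0(2,d,4)$.

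The heart of the argument is (iii), with $d = g+3$ (odd $g$) or $d = g+2$ (even $g$); I would proceed in three stages. First, to show $\mathcal{P}_0(2,d,4) = \emptyset$: a subpencil of type $(1,d',2)$ forces $d' \leq d/2$ by semistability of $E$, while classical Brill--Noether on line bundles gives $d' \geq \lceil (g+2)/2\rceil$; in both numerical cases these two inequalities force $d' = d/2$, which then violates $\alpha$-stability of $(E,V)$ for every $\alpha > 0$. Second, to show $S_0(2,d,4) = G_0(2,d,4)$: if $V$ does not generate $E$, let $F$ be the saturation of the subsheaf generated by $V$; then $V \subset H^0(F)$ gives $h^0(F) \geq 4$, hence $\deg F \geq (3g+12)/4$ by classical Brill--Noether, which is inconsistent with $\deg F < d/2$ coming from $\alpha$-stability for small $\alpha$. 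Third, to show $E$ is stable: if $E$ is strictly semistable with a destabilizing line subbundle $L$ of degree $d/2$, a case analysis on $\dim(V \cap H^0(L)) \in \{0,1,2\}$ (the values $3$ and $4$ being ruled out by Remark \ref{rem1g} and the generation just established) either reproduces a subpencil (contradicting the first stage) or forces $h^0(L)$ or $h^0(E/L)$ large enough to violate classical Brill--Noether on $X$. The main obstacle is the case-by-case Brill--Noether bookkeeping in this third stage, together with the separate $g=3$ treatment needed in (i); the remaining parts are mechanical applications of the general machinery already assembled.
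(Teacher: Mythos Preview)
Your treatment of (iv) and (v) matches the paper exactly, and your approach to (i) for $g\ge4$ via Lemma~\ref{lcliff} and $\Cliff_2(X)=\lfloor(g-1)/2\rfloor$ is the same as the paper's. For $g=3$ the paper cites specific results from \cite{lnci} (Theorem~4.15(a) for $d\le4$ and Corollary~4.12 for $d=5$); your sketch is in the right spirit but note that the ordinary rank-$2$ Clifford bound only gives $h^0(E)\le d/2+2$, which does \emph{not} exclude $d=4$, so something sharper is genuinely needed there.

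There is a real gap in (ii). Positivity of $\beta(2,d,4)$ together with \cite{T2,mon} does \emph{not} cover all the stated cases: Teixidor's existence theorem in \cite{T2} omits precisely the case $g$ odd, $d=g+3$ (even though $\beta=1$ there). The paper has to invoke three separate sources for this residual case: \cite[Theorem~1.1]{fo} for $g\ge9$, \cite[\S3]{berf} for $g=5$, and \cite[Postscript]{gmn} for $g=7$. Your proposal glosses over this, and it is exactly the hard part of (ii).

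There is also a gap in your generation argument in (iii). You take $F$ to be the saturation of the subsheaf generated by $V$ and then argue as if $F$ were a line bundle (classical Brill--Noether, and the bound $\deg F<d/2$ from $\alpha$-stability, both presuppose $\operatorname{rk}F=1$). But the image of the evaluation map may well have rank~$2$, in which case its saturation is $E$ itself and your argument is vacuous. The paper handles generation differently and more cleanly: having already established (i), one observes that if $(E,V)$ were not generated then $G_0(2,d-1,4)\ne\emptyset$, contradicting (i). Incidentally, the paper also reverses your order in (iii): it proves stability \emph{before} generation, by noting that the absence of a subpencil forces the image of $V$ in $H^0(Q)$ to have dimension $\ge3$ for every line-bundle quotient $Q$, whence $d_Q\ge\lceil 2g/3\rceil+2>d/2$; this single estimate replaces your case analysis on $\dim(V\cap H^0(L))$.
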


\begin{proof}
(i) For $g\ge4$, this follows from the fact that $\Cliff_2(X)=\Cliff(X)=\left\lfloor\frac{g-1}2\right\rfloor$  \cite{bf}. For $g=3$, $\Cliff_2(X)$ is not defined. However, in this case, if $E$ is semistable of rank $2$ with $d_E\le4$, then $h^0(E)\le3$ by \cite[Theorem 4.15(a)]{lnci}. If $d_E=5$, then $h^0(K_X\otimes E^*)\le2$ by \cite[Corollary 4.12]{lnci}; hence, by Riemann-Roch, $h^0(E)\le3$.

(ii) Except when $g$ is odd and $d=g+3$, this follows from \cite{T2}. For $g$ odd, $d=g+3$, $g\ge9$, see \cite[Theorem 1.1]{fo} (this theorem is stated in \cite{fo} for arbitrary odd genus, but is proved only for $g\ge9$). For $g=5$, we have $B(2,8,4)\ne\emptyset$ by \cite[section 3]{berf}. For $g=7$, see \cite[Postscript]{gmn}.


(iii) The fact that $\mathcal{P}_0(2,d,4)=\emptyset$ is a special case of Proposition \ref{segrerk2}.

Now suppose $(E,V)\in G_0(2,d,4)$. Since $(E,V)$ does not admit a subpencil, any quotient  $Q$ of $E$ must have $h^0(Q)\ge3$. By classical Brill-Noether theory, this implies that $d_Q\ge \left\lceil\frac{2g}3\right\rceil+2>\frac{d}2$; so $E$ is stable. If $(E,V)$ is not generated, then $G_0(2,d-1,4)\ne\emptyset$, contradicting (i). This completes the proof.

(iv) $S_0(2,d,4)\ne\emptyset$ for $d\ge 2g+2$ by Theorem \ref{props0}.

(v) follows from Theorem \ref{props0}.
\end{proof}

\begin{remark}\label{rem3.1}\begin{em}\
\begin{enumerate}
\item If $g=3$, $G_0(2,6,4)=\emptyset$. In fact, if $g=3$ and $(E,V)\in G_0(2,6,4)$, $E$ cannot be stable since  $B(2,6,4)=\emptyset$ (see, for example, \cite[section 5]{gmn}), while, if $E$ is strictly semistable, it must admit a subpencil, contradicting $\alpha$-stability for all $\alpha>0$. So $G_0(2,6,4)=\emptyset$.
\item For $g=4$, $G_0(2,7,4)\ne\emptyset$, but $S_0(2,7,4)=\emptyset$.  In fact, in this case, $h^0(\det E)<5$ and it follows from results of \cite{ln4} that any $E\in B(2,7,4)$ has the form $E\simeq K_X\otimes E_L^*$ (with the notation of \cite{ln4}). But then (see \cite[Lemma 5.9]{ln4}), $E$ possesses a quotient bundle of the form $T(p)$, where $T$ is either of the trigonal bundles on $X$ and $p\in X$. Since $T(p)$ is not generated, it follows that $E$ is not generated. Hence $S_0(2,7,4)=\emptyset$ when $g=4$.
\item For $g=5$,  $S_0(2,8,4)=G_0(2,8,4)$ is irreducible of dimension $2$ (see \cite[Propositions 5.1 and 5.3]{news1}). Here $\beta(2,8,4)=1$.
\item For $g=7$, the same is true for $S_0(2,10,4)=G_0(2,10,4)$ (see \cite[Postscript]{gmn}). Again $\beta(2,10,4)=1$.
\item For $g$ odd, $g\ge9$, $S_0(2,g+3,4)=G_0(2,g+3,4)$ has an irreducible component of dimension $2$ by \cite[Theorem 1.1]{fo}.
\end{enumerate}
\end{em}\end{remark}

\begin{remark}\label{r43}\begin{em}
For $g$ even, if $(E,V)\in G_0(2,g+2,4)$ then $(E,V)$ does not admit a subpencil, so $h^0(\det E)\ge5$ by \cite[Lemma 3.9]{pr}. This is impossible for $4\le g\le 8$, so in these cases $G_0(2,g+2,4)=\emptyset$. We know also that, when $g=10$, $G_0(2,12,4)=\emptyset$ (see \cite[Theorem 4.1]{gmn}). It seems reasonable to conjecture that $G_0(2,g+2,4)=\emptyset$ for all $g$; certainly $\beta(2,g+2,4)=-3<0$.
\end{em}\end{remark}

Recall now that, by Proposition \ref{propt}, the morphism $D:T(2,d,4)\to T(2,d,4)$ is always an automorphism. The next proposition asserts that, for sufficiently small $d$, $T(2,d,4)$ is as large as is permitted by Proposition \ref{coropencil}.

\begin{proposition} \label{but1}  Let $X$ be a general curve and  $d \leq 2\left(\left\lceil\frac{2g}{3}\right\rceil+2\right) $. Then $$T(2,d,4)=S_0(2,d,4)\setminus\mathcal{P}_0(2,d,4).$$
\end{proposition}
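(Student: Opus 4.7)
The inclusion $T(2,d,4)\subseteq S_0(2,d,4)\setminus\mathcal{P}_0(2,d,4)$ is immediate from Proposition \ref{coropencil} with $n=m=2$. For the reverse inclusion, my plan is to fix $(E,V)\in S_0(2,d,4)\setminus\mathcal{P}_0(2,d,4)$ and establish $D(E,V)\in S_0(2,d,4)$ by contradiction: assuming $(M^*_{V,E},V^*)$ fails to be $\alpha$-stable for small $\alpha>0$, I would construct an explicit subpencil of $(E,V)$.

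Since $D(E,V)$ is generated by \cite[Lemma 2.9]{bgmmn}, failure of $\alpha$-stability for small $\alpha>0$ would yield a line subbundle $F\subset M^*_{V,E}$ with either (a) $d_F>d/2$, or (b) $d_F=d/2$ and $W':=V^*\cap H^0(F)$ has $\dim W'\ge 2$. Setting $L:=M^*_{V,E}/F$ (a line bundle of degree $d-d_F$) and dualising the sequence $0\to F\to M^*_{V,E}\to L\to 0$ realises $L^*$ as a line subbundle of $M_{V,E}\subset V\otimes\mathcal{O}_X$. The inclusion $L^*\hookrightarrow V\otimes\mathcal{O}_X$ corresponds to a tensor $\xi\in V\otimes H^0(L)$; I would write $\xi=\sum_{i=1}^r v_i\otimes s_i$ in minimal tensor-rank form and prove $r=2$.

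To rule out $r=1$: the form $\xi=v_1\otimes s_1$ would cause the image of $L^*$ in $V\otimes\mathcal{O}_X$ to saturate to $v_1\cdot\mathcal{O}_X\cong\mathcal{O}_X$, forcing $d_L=0$ and hence $d_F=d$; then $L^*\cong\mathcal{O}_X\subset M_{V,E}$ exhibits a nonzero $v_1\in V$ vanishing in $H^0(E)$, contradicting $V\hookrightarrow H^0(E)$. For the upper bound $r\le 2$, in case (a) one has $\deg L=d-d_F<d/2\le\lceil 2g/3\rceil+2$; since $W^2_e$ on a general curve is empty for $e<\lceil 2g/3\rceil+2$ by Brill--Noether, $h^0(L)\le 2$, hence $r\le h^0(L)\le 2$. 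In case (b), the surjection $V^*\otimes\mathcal{O}_X\twoheadrightarrow L$ dual to $L^*\hookrightarrow V\otimes\mathcal{O}_X$ induces a linear map $V^*\to H^0(L)$ of rank $r$ whose kernel equals $V^*\cap H^0(F)=W'$; thus $4-r=\dim W'\ge 2$, so again $r\le 2$.

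Finally, with $r=2$, set $W:=\langle v_1,v_2\rangle\subset V$; then $L^*\subset W\otimes\mathcal{O}_X$ and the evaluation $W\otimes\mathcal{O}_X\to E$ factors through the rank-one quotient $W\otimes\mathcal{O}_X/L^*\to E$, which is nonzero (else $W=0$ in $H^0(E)$). Its image saturates to a line subbundle $L'\subset E$ with $W\subset H^0(L')\cap V$, producing a subpencil $(L',W)$ of $(E,V)$ and contradicting $(E,V)\notin\mathcal{P}_0(2,d,4)$. The main obstacle I anticipate is case (b) at the boundary $d=2(\lceil 2g/3\rceil+2)$, where $\deg L=d/2$ genuinely admits $h^0(L)=3$ on a general curve; here it is the destabilising subspace $W'\subset V^*$, rather than $\deg L$ alone, that forces the tensor rank back down to $2$ and exhibits the hypothesis on $d$ as the natural one.
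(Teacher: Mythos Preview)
Your proof is correct and follows essentially the same approach as the paper's. The paper argues directly: for any quotient line bundle $L$ of $M_{V,E}^*$, the image $W$ of $V^*$ in $H^0(L)$ has $\dim W\ge3$ (since $\dim W=2$ would produce a subpencil of $(E,V)$ via the dualised diagram), and then classical Brill--Noether gives $d_L\ge\lceil 2g/3\rceil+2\ge d/2$, establishing semistability and the absence of subpencils in $D(E,V)$ simultaneously. Your tensor rank $r$ is precisely $\dim W$, and your construction of the subpencil $(L',\langle v_1,v_2\rangle)$ is the dualised version of the paper's $(F',W^*)$; the two arguments are dual reformulations of one another. Your case split (a)/(b) and the paper's uniform treatment of all quotients are merely organisational variants, with the paper's direct argument being slightly more economical since it avoids treating the boundary $d_F=d/2$ separately.
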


\begin{proof} Note first that $T(2,d,4)\subset S_0(2,d,4)\setminus\mathcal{P}_0(2,d,4)$ by Proposition \ref{coropencil}. Conversely, let $(E,V)\in S_0(2,d,4)\setminus\mathcal{P}_0(2,d,4)$ and let $L$ be a quotient line bundle of $M_{V,E}^* $. Let $W$ denote the image of $V^*\subset H^0(M_{V,E}^*)$ in $H^0(L)$. Then $W$ generates $L$ and $L\not\simeq {\mathcal O}_X$ since $H^0(M_{V,E})=0$. So we have a diagram

\begin{equation}\label{diagr2}
\begin{array}{ccccccccc}
0&\lra& E^*&\lra &
V^*\otimes \mathcal{O}_X &\lra &M_{V,E}^*&\lra &0\\
&& \downarrow&&\downarrow&&\downarrow&\\
 0&\lra& F^*&\lra &
W\otimes \mathcal{O}_X &\lra &L&\lra &0\\
&& &&\downarrow&&\downarrow&\\
&&&&0&&0\\
\end{array}
\end{equation}
with $\dim W\ge2$. The homomorphism $E^*\to F^*$ is not necessarily surjective, but it cannot be $0$.

If $\dim W=2$, then $F$ is a line bundle. Dualising \eqref{diagr2}, we obtain a non-zero homomorphism $F\to E$ and hence a subpencil $(F',W^*)$ of $(E,V)$, where $F'$ is the saturation of the image of $F$ in $E$. This contradicts our assumption. Hence $\dim W\ge3$ and, by classical Brill-Noether theory,
$$d_L\ge\left\lceil\frac{2g}{3}\right\rceil+2\ge\frac{d}2=\mu(M_{V,E}^*).$$
Hence $M_{V,E}^*$ is semistable and we have proved also that $D(E,V):=(M_{V,E}^*,V^*)$ does not admit a subpencil. So $D(E,V)\in S_0(2,d,4)\setminus\mathcal{P}_0(2,d,4)$. Hence $(E,V)\in T(2,d,4)$.
\end{proof}

\begin{corollary}\label{c45}
Let $X$ be a general curve of odd genus $g\ge5$. Then Butler's Conjecture holds non-trivially for $(2,g+3,4)$.
\end{corollary}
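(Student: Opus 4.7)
The plan is to combine Theorem \ref{p3.1}, Proposition \ref{but1}, and Theorem \ref{cort}, exploiting the fact that this is the self-dual case $(n,m) = (2,2)$, so the dual span construction sends $S_0(2,g+3,4)$ to itself.

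First, I would invoke Theorem \ref{p3.1}(iii) with $g$ odd and $d = g+3$, which supplies two crucial facts: $\mathcal{P}_0(2,g+3,4) = \emptyset$ (no subpencils occur) and $S_0(2,g+3,4) = G_0(2,g+3,4)$. Next, Theorem \ref{p3.1}(ii) applied with $g \ge 5$ odd and $d = g+3$ guarantees $G_0(2,g+3,4) \ne \emptyset$, hence $S_0(2,g+3,4) \ne \emptyset$. This establishes the non-triviality that the statement demands.

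Second, I would verify the numerical hypothesis required to invoke Proposition \ref{but1}: we need $d = g+3 \le 2(\lceil 2g/3 \rceil + 2)$, which rearranges to $g - 1 \le 2\lceil 2g/3 \rceil$. Since $2\lceil 2g/3 \rceil \ge 4g/3$, this holds comfortably for every $g \ge 5$. Proposition \ref{but1} then yields
\[
T(2,g+3,4) \;=\; S_0(2,g+3,4) \setminus \mathcal{P}_0(2,g+3,4) \;=\; S_0(2,g+3,4),
\]
using the vanishing of $\mathcal{P}_0$ from the previous step.

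Finally, I would apply Theorem \ref{cort} with $n = m = 2$. The two density hypotheses there, namely that $T(n,d,n+m)$ is dense in $S_0(n,d,n+m)$ and that $T(m,d,n+m)$ is dense in $S_0(m,d,n+m)$, collapse in this self-dual setting to the single condition that $T(2,g+3,4)$ be dense in $S_0(2,g+3,4)$; this is trivially satisfied because the two coincide. Together with the non-emptiness of $S_0(2,g+3,4)$ established in the first step, this proves that Butler's Conjecture holds non-trivially for $(2,g+3,4)$. I do not foresee any real obstacle here: the corollary is essentially an assembly of ingredients already established, with the only mildly technical point being the elementary verification that $g+3$ lies within the range permitted by Proposition \ref{but1}.
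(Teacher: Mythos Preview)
Your proof is correct and follows essentially the same route as the paper, which cites Theorem \ref{p3.1}(ii) and (iii) together with Proposition \ref{but1}. Your explicit invocation of Theorem \ref{cort} at the end simply makes precise what the paper leaves implicit once $T(2,g+3,4)=S_0(2,g+3,4)\ne\emptyset$ has been established.
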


\begin{proof}
This follows from Theorem \ref{p3.1}(ii) and (iii) and Proposition \ref{but1}.
\end{proof}

In order to use Proposition \ref{but1} to obtain further cases in which Butler's Conjecture holds non-trivially for $(2,d,4)$, we need to study $\mathcal{P}_0(2,d,4)$. In fact, if $(E,V)\in \mathcal{P}_0(2,d,4)$, we have a non-split exact sequence
\begin{equation}\label{eqp1}
0\lra (L_1,V_1)\lra (E,V)\lra (L_2,V_2)\lra0,
\end{equation}
with  $\dim V_1\ge2$. Since $V_2$ generates $L_2$ and $h^0(E^*)=0$, we must have $\dim V_2\ge2$, hence $\dim V_1=\dim V_2=2$. For fixed $(L_1,V_1)\in G(1,d_1,2)$, $(L_2,V_2)\in G(1,d_2,2)$ with $d_2=d-d_1>d_1$, there exists a non-split extension \eqref{eqp1} if and only if
$$\dim\Ext^1((L_2,V_2),(L_1,V_1))\ge1.$$
 Now $\Hom((L_2,V_2),(L_1,V_1))=0$ since $d_2>d_1$, so, by \cite[(8),(9) and (11)]{bgmn}
\begin{equation}\label{eqext}
\dim\Ext^1((L_2,V_2),(L_1,V_1))=C_{21}+h^0(L_1^*\otimes L_2^*\otimes K_X),
\end{equation}
where
\begin{equation}\label{eqc21}
C_{21}=g-1-d_1+d_2+2d_1-2(g-1)-4=d-g-3.
\end{equation}

\begin{proposition}\label{p47}
Let $X$ be a general curve. If
\begin{equation}\label{eq43}
g+4\le d\le2\left(\left\lceil\frac{2g}{3}\right\rceil+2\right),
\end{equation}
then $\mathcal{P}_0(2,d,4)\ne \emptyset$.
\end{proposition}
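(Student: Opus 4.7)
The plan is to construct $(E,V)\in\mathcal{P}_0(2,d,4)$ as a non-split extension of two generated pencils. First, choose integers $d_1<d_2$ with $d_1+d_2=d$ and each $d_i\ge\lceil(g+2)/2\rceil$; this is possible thanks to $d\ge g+4$ (take e.g.\ $d_1=\lceil(g+2)/2\rceil$, $d_2=d-d_1$, where the gonality of the general curve ensures pencils of degree $d_1$ are base-point-free). Classical Brill-Noether theory on the general curve $X$ then supplies generated pencils $(L_i,V_i)\in G(1,d_i,2)$. Since the hypothesis also gives $C_{21}=d-g-3\ge1$, \eqref{eqext} yields $\dim\Ext^1((L_2,V_2),(L_1,V_1))\ge1$, and a non-zero extension class produces
$$0\lra(L_1,V_1)\lra(E,V)\lra(L_2,V_2)\lra 0,$$
a coherent system of type $(2,d,4)$ in which $(L_1,V_1)$ sits as a subpencil.

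It then remains to verify that, for a generic choice of extension class, $(E,V)$ lies in $S_0(2,d,4)$. Generation of $E$ by $V$ follows from generation of each $L_i$ by $V_i$ via the snake lemma applied fibrewise. The subpencil $(L_1,V_1)$ itself is harmless for $\alpha$-stability since $\mu_\alpha(L_1,V_1)=d_1+2\alpha<d/2+2\alpha=\mu_\alpha(E,V)$, using $d_1<d_2$. By Remark \ref{rem1g}, the only other potentially destabilising subsystems for small $\alpha>0$ are sub-line bundles $L'\subset E$ distinct from $L_1$ with $d_{L'}\ge d/2$ (or $d_{L'}=d/2$ carrying a $2$-dimensional sub-linear system); chasing the extension, every such $L'$ must be of the form $L_2(-D)$ for some effective divisor $D$ of degree $d_2-d_{L'}$. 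For each such $D$, the locus of extensions in which $L_2(-D)$ lifts to a sub-line bundle of $E$ is the image of $\Ext^1(L_2|_D,L_1)\to\Ext^1(L_2,L_1)$, which has dimension at most $\deg D$.

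The main obstacle is the dimension count showing that the union of these bad loci, taken over the $(\deg D)$-dimensional family of effective $D$ with $\deg D\le d_2-\lfloor d/2\rfloor$, is a proper subset of $\Ext^1((L_2,V_2),(L_1,V_1))$. For $d$ somewhat larger than $g+3$ this follows by directly comparing $\dim\Ext^1((L_2,V_2),(L_1,V_1))=C_{21}+h^0(K_X\otimes L_1^*\otimes L_2^*)$ against the cumulative bad-locus dimensions. The delicate boundary case is $d=g+4$, where for generic $(L_1,L_2)$ we have $\dim\Ext^1((L_2,V_2),(L_1,V_1))=1$: when $g$ is odd, $d_2-\lfloor d/2\rfloor=0$ forces $D=0$, which is excluded by the non-split hypothesis, so no bad $L'$ can arise at all; when $g$ is even, one must check that a generic choice of pencils places the unique line $\Ext^1((L_2,V_2),(L_1,V_1))\subset\Ext^1(L_2,L_1)$ off the finitely many $1$-dimensional bad subspaces coming from the candidate $L_2(-p)\subset E$. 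The resulting $(E,V)$ then lies in $\mathcal{P}_0(2,d,4)$, as required.
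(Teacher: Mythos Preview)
Your approach differs substantially from the paper's and, as written, has a genuine gap: the dimension count that you flag as ``the main obstacle'' is never actually carried out. You assert that ``for $d$ somewhat larger than $g+3$ this follows by directly comparing\dots'', but give no comparison; and in the boundary case $d=g+4$ with $g$ even you say ``one must check that a generic choice of pencils places the unique line\dots off the finitely many $1$-dimensional bad subspaces'' without performing that check. Your bad-locus analysis also takes place in $\Ext^1(L_2,L_1)$, whereas the extensions you are parametrising lie in $\Ext^1((L_2,V_2),(L_1,V_1))$; the relationship between these two groups via the forgetful map must be made precise before any dimension comparison is meaningful. (There is also a minor slip: for $g$ odd and $d=g+4$ one has $d_2-\lfloor d/2\rfloor=1$, not $0$, though your conclusion in that case happens to be correct since $d$ is odd.)

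More tellingly, you never use the upper bound $d\le 2\bigl(\lceil 2g/3\rceil+2\bigr)$. The paper's argument exploits it to avoid any genericity or dimension count: with $d_1=\lfloor(g+3)/2\rfloor$ and both pencils generated, \emph{every} non-split extension \eqref{eqp1} already lies in $\mathcal{P}_0(2,d,4)$. Indeed, if $L\subset E$ is a line subbundle with $d_L>d/2$, then $d_{E/L}<d/2\le\lceil 2g/3\rceil+2$, so classical Brill-Noether forces $h^0(E/L)\le 2$ and hence $W:=V\cap H^0(L)$ has $\dim W\ge 2$; since $d_L>d_1$ the composite $L\to L_2$ is non-zero, giving an injection $(L,W)\hookrightarrow(L_2,V_2)$ with $\dim W=\dim V_2=2$, which contradicts generation of $(L_2,V_2)$ if $d_L<d_2$ and forces the extension to split if $d_L=d_2$. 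The strictly semistable case is handled by the same injection argument. Thus the upper bound on $d$ is precisely what makes the Brill-Noether step work, and no appeal to a generic extension class is needed.
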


\begin{proof}
Suppose that $d$ satisfies \eqref{eq43}. Then, by \eqref{eqext} and \eqref{eqc21}, there exists a non-split extension \eqref{eqp1} with $d_1=\left\lfloor\frac{g+3}2\right\rfloor$ and $d_2=d-d_1>d_1$. By classical Brill-Noether theory, we can assume that $(L_1,V_1)$ and $(L_2,V_2)$ are both generated. If $E$ is stable, then certainly $(E,V)\in\mathcal{P}_0(2,d,4)$. If $E$ is not semistable, then $E$ admits a line subbundle $L$ of degree $d_L>\frac{d}2$. Then $E/L$ is a line bundle of degree
$$d_{E/L}<\frac{d}2\le\left\lceil\frac{2g}{3}\right\rceil+2.$$
It follows by classical Brill-Noether theory that $h^0(E/L)\le2$, so $W=H^0(L)\cap V$ has dimension $\ge2$. Now \eqref{eqp1} induces a non-zero homomorphism $(L,W)\to (L_2, V_2)$, which is necessarily injective; moreover $\dim W=\dim V_2=2$. If $d_L<d_2$, this contradicts the fact that $(L_2,V_2)$ is generated. On the other hand, if $d_L=d_2$, then \eqref{eqp1} splits, another contradiction.

Now suppose that $E$ is strictly semistable and let $L$ be a line subbundle of $E$ with $d_L=\frac{d}2$ and $W=H^0(L)\cap V$. If $\dim W=2$, we obtain again a non-zero homomorphism $(L,W)\to (L_2,V_2)$, which contradicts the assumption that $(L_2,V_2)$ is generated. Hence $\dim W=1$ for all such $L$, so that $(E,V)\in\mathcal{P}_0(2,d,4)$.
\end{proof}

\begin{proposition}\label{p3.2}
Let $X$ be a general curve. If there exists an integer $r\ge1$ such that
\begin{equation}\label{eqp2}
d\ge\max\left\{g-r+\left\lfloor\frac{g}r\right\rfloor,g+r+3\right\},
\end{equation}
then
\begin{equation}\label{eqp0}
\dim\mathcal{P}_0(2,d,4)<\beta(2,d,4).
\end{equation}
Hence $\mathcal{P}_0(2,d,4)$ does not contain any irreducible component of $S_0(2,d,4)$.
\end{proposition}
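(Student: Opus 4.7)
The plan is to stratify $\mathcal{P}_0(2,d,4)$ by the degree of a subpencil and bound its dimension via a parameter space of extensions of coherent systems, combined with a Brill--Noether vanishing that exploits the hypothesis.

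For $(E,V)\in\mathcal{P}_0(2,d,4)$, choose a subpencil $(L_1,V_1)$ with $L_1\subset E$ a line subbundle, and set $L_2:=E/L_1$, $V_2:=V/V_1$ (of dimension $4-\dim V_1=2$), giving
\begin{equation*}
0\lra(L_1,V_1)\lra(E,V)\lra(L_2,V_2)\lra 0,
\end{equation*}
with $(L_i,V_i)\in G(1,d_i,2)$ for some $d_1+d_2=d$. The $\alpha$-stability of $(E,V)$ for small $\alpha>0$ forces $d_1<d_2$, since $d_1=d_2$ would yield $\mu_\alpha(L_1,V_1)=\mu_\alpha(E,V)$. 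For $X$ general, classical Brill--Noether theory gives $\dim G(1,d_i,2)=2d_i-g-2$ whenever non-empty; for fixed $(L_i,V_i)$ the non-split extensions of coherent systems, modulo scaling, are classified by $\mathbb{P}\Ext^1((L_2,V_2),(L_1,V_1))$, of dimension $(d-g-4)+h^0(K_X\otimes L_1^{-1}\otimes L_2^{-1})$ by \eqref{eqext} and \eqref{eqc21}. Constructing the universal extension locally in the \'etale topology over $G(1,d_1,2)\times G(1,d_2,2)$ (in the spirit of Lemma \ref{genopen}) and taking its image in $\mathcal{P}_0(2,d,4)$ yields, for each admissible $d_1$, the uniform bound
\begin{equation*}
\dim\mathcal{P}_0(2,d,4)\le(2d_1-g-2)+(2d_2-g-2)+(d-g-4)+h^0=3d-3g-8+h^0,
\end{equation*}
where $h^0:=h^0(K_X\otimes L_1^{-1}\otimes L_2^{-1})$.

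The key step is to show $h^0\le r-1$ for every $(L_1,L_2)$. The line bundle $M:=K_X\otimes L_1^{-1}\otimes L_2^{-1}$ has degree $2g-2-d$; if $\deg M<0$ then $h^0(M)=0$. Otherwise, $h^0(M)\ge r$ would place $M$ in the locus of line bundles of degree $2g-2-d$ with $h^0\ge r$, which on a general curve is non-empty only if the Brill--Noether number $\rho:=g-r(d+r+1-g)\ge 0$. However, the hypothesis $d\ge g-r+\lfloor g/r\rfloor$ gives $d+r+1-g\ge\lfloor g/r\rfloor+1$, so
\begin{equation*}
r(d+r+1-g)\ge r\lfloor g/r\rfloor+r=g-(g\bmod r)+r\ge g-(r-1)+r=g+1>g,
\end{equation*}
forcing $\rho<0$. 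Thus $h^0\le r-1$. Combined with $d\ge g+r+3$, which gives $d-g-3\ge r>r-1$, we obtain
\begin{equation*}
\dim\mathcal{P}_0(2,d,4)\le 3d-3g-8+(r-1)<3d-3g-8+(d-g-3)=4d-4g-11=\beta(2,d,4).
\end{equation*}
The final assertion is immediate, since every irreducible component of $S_0(2,d,4)$ has dimension at least $\beta(2,d,4)$.

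The main technical obstacle is rigorously assembling the extension parameter space and verifying that the resulting morphism onto $\mathcal{P}_0(2,d,4)$ is well-defined and surjective; this needs a moduli-theoretic treatment of families of coherent-system extensions analogous to what appears in Lemma \ref{genopen}. Everything else is a straightforward dimension count combined with the Brill--Noether existence theorem for line bundles.
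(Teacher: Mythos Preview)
Your proposal is correct and follows essentially the same route as the paper: stratify by the subpencil degree, bound the locus by the dimension of the parameter space of extensions of $(L_2,V_2)$ by $(L_1,V_1)$, and use the generality of $X$ together with $d\ge g-r+\lfloor g/r\rfloor$ to force $h^0(K_X\otimes L_1^{-1}\otimes L_2^{-1})\le r-1$, then invoke $d\ge g+r+3$ to close the inequality. The paper compresses your explicit dimension count into a single citation of \cite[Corollary 3.7]{bgmn} (which packages exactly the inequality $C_{21}>h^0(L_1^*\otimes L_2^*\otimes K_X)$ implying \eqref{eqp0}), so the technical obstacle you flag about assembling the family of extensions is already handled there rather than redone in the paper.
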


\begin{proof}
Any $(E,V)\in\mathcal{P}_0(2,d,4)$ can be inserted in a non-split exact sequence \eqref{eqp1}. For fixed $(L_1,V_1)$, $(L_2,V_2)$, such extensions depend on
$$\dim\Ext^1((L_2,V_2),(L_1,V_1))-1$$
parameters.  By \cite[Corollary 3.7]{bgmn} and noting that here $C_{12}=C_{21}$, \eqref{eqp0} would follow if we had
\begin{equation}\label{eqp3}
d-g-3>h^0(L_1^*\otimes L_2^*\otimes K_X)
\end{equation}
for all possible choices of $(L_1,V_1)$, $(L_2,V_2)$. If $r\ge1$ and $d\ge g+r+3$, then \eqref{eqp3} will hold provided that $h^0(L_1^*\otimes L_2^*\otimes K_X)\le r-1$. Since $\deg(L_1^*\otimes L_2^*\otimes K_X)=2g-2-d$, classical Brill-Noether theory will ensure that this is true provided that
$$2g-2-d< g+r-1-\left\lfloor\frac{g}r\right\rfloor.$$
This is equivalent to $d\ge g-r+\left\lfloor\frac{g}r\right\rfloor$.
\end{proof}

\begin{proposition}\label{p510}
Let $X$ be a general curve of genus $g$, $3\le g\le5$. Then
\begin{enumerate}
\item[(i)]  $S_0(2,d,4)\ne\emptyset$ if and only if $d\ge g+4$ or $g=5$, $d=8$;
\item[(ii)] $\mathcal{P}_0(2,d,4)$ does not contain any irreducible component of $S_0(2,d,4)$.
\end{enumerate}
\end{proposition}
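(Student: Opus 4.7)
The plan is to reduce both parts of the proposition to a case-by-case appeal to results already established in the paper (Theorem \ref{p3.1}, Remarks \ref{rem3.1} and \ref{r43}, Proposition \ref{p47}, Proposition \ref{p3.2}); no new construction should be needed. The main insight, and the only non-routine step, will be noticing that Proposition \ref{p47} simultaneously yields non-emptiness of $\mathcal{P}_0(2,d,4)$ and therefore of $S_0(2,d,4)$ in exactly the intermediate range that is not covered by Theorem \ref{p3.1}(iv).

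For the emptiness half of (i), I would organise by genus. When $g=3$, Theorem \ref{p3.1}(i) yields $G_0(2,d,4)=\emptyset$ for $d\le5$ and Remark \ref{rem3.1}(1) gives $G_0(2,6,4)=\emptyset$. When $g=4$, Theorem \ref{p3.1}(i) covers $d\le5$, Remark \ref{r43} gives $G_0(2,6,4)=\emptyset$, and Remark \ref{rem3.1}(2) gives $S_0(2,7,4)=\emptyset$. When $g=5$, Theorem \ref{p3.1}(i) covers $d\le7$. Since $S_0\subset G_0$, every $d$ outside the claimed range is excluded.

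For non-emptiness in (i), the case $d\ge 2g+2$ follows from Theorem \ref{p3.1}(iv), and the case $g=5$, $d=8$ from Remark \ref{rem3.1}(3). The remaining cases are
$$(g,d)\in\{(3,7),(4,8),(4,9),(5,9),(5,10),(5,11)\}.$$
Each satisfies $g+4\le d\le 2(\lceil 2g/3\rceil+2)$, the upper bound being $8,10,12$ for $g=3,4,5$ respectively, so Proposition \ref{p47} produces $\mathcal{P}_0(2,d,4)\ne\emptyset$; since $\mathcal{P}_0(2,d,4)\subset S_0(2,d,4)$ by definition, the required non-emptiness follows. This is the step I expect to be the crux: once one recognises that Proposition \ref{p47} already delivers elements of $S_0$ through $\mathcal{P}_0$, the gap between the lower bound $d\ge g+4$ and Theorem \ref{p3.1}(iv)'s threshold $d\ge 2g+2$ is closed automatically.

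For (ii), the approach is to apply Proposition \ref{p3.2} with $r=1$, whose hypothesis $d\ge\max\{2g-1,g+4\}$ reduces to $d\ge g+4$ since $2g-1\le g+4$ for $g\le 5$. This covers every $d$ in the non-empty range of (i) except $g=5$, $d=8$, and for that remaining case Theorem \ref{p3.1}(iii) (applicable because $g$ is odd and $d=g+3$) gives $\mathcal{P}_0(2,8,4)=\emptyset$, so the containment statement holds vacuously. Hence $\mathcal{P}_0(2,d,4)$ contains no irreducible component of $S_0(2,d,4)$ throughout the relevant range.
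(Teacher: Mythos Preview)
Your proof is correct and follows essentially the same approach as the paper's: emptiness via Theorem \ref{p3.1}(i) and Remarks \ref{rem3.1}(1),(2), non-emptiness via Theorem \ref{p3.1}(iv) and Proposition \ref{p47}, and part (ii) via Proposition \ref{p3.2} with $r=1$ together with Theorem \ref{p3.1}(iii) for the exceptional case $g=5$, $d=8$. Your explicit invocation of Remark \ref{r43} for $g=4$, $d=6$ is in fact a small improvement, as the paper's own citation list omits that case.
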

\begin{proof}
(i) If $g=3$ or $g=4$, $S_0(2,d,4)=\emptyset$ for $d\le g+3$ by Theorem \ref{p3.1}(i) and Remark \ref{rem3.1}(1), (2) and for $g=5$, $d\le7$ by Theorem \ref{p3.1}(i). The rest follows from Theorem \ref{p3.1}(iv) and Propositions \ref{p47} and \ref{p3.2}.

(ii) For $g\le5$, $r=1$, \eqref{eqp2} becomes $d\ge g+4$. The result follows from Proposition \ref{p3.2} and (for $g=5$, $d=8$) Theorem \ref{p3.1}(iii).
\end{proof}

\begin{theorem}\label{cor3.2}
Let $X$ be a general curve of genus $g\ge3$. Then there exist integers $d$ such that
\begin{equation}\label{eq3.2}
\max\left\{g-r+\left\lfloor\frac{g}r\right\rfloor,g+r+3\right\}\le d\le2\left(\left\lceil\frac{2g}3\right\rceil+2\right)\mbox{ for some integer }r\ge1.
\end{equation}
Moreover, if $d$ satisfies these inequalities, then
\begin{enumerate}
\item[(i)] $T(2,d,4)=S_0(2,d,4)\setminus \mathcal{P}_0(2,d,4)$;
\item[(ii)] $\mathcal{P}_0(2,d,4)\ne\emptyset$ and $\dim\mathcal{P}_0(2,d,4)<\beta(2,d,4)$;
\item[(iii)] Butler's Conjecture holds non-trivially for $(2,d,4)$.
\end{enumerate}
If, in addition, $d\ge2g-1$, then $\dim S_0(2,d,4)=\beta(2,d,4)$ and $T(2,d,4)$ is smooth.
\end{theorem}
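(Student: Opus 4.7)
The plan is to package the theorem as a corollary of the preceding machinery in this section, namely Propositions \ref{but1}, \ref{p47}, \ref{p3.2} and \ref{dimcomp}, together with Lemma \ref{pencil} and Theorem \ref{cort}.

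First I would dispose of the arithmetic existence claim. For each genus $g\ge 3$, I need to exhibit some $r\ge 1$ for which the interval in \eqref{eq3.2} contains an integer. For $3\le g\le 8$ the choice $r=1$ already works, since one checks directly that $\max\{2g-1,\,g+4\}\le 2(\lceil 2g/3\rceil+2)$ in this range. For larger $g$, the term $g+r+3$ grows with $r$ while $g-r+\lfloor g/r\rfloor$ decreases, so I would take $r$ of order $\sqrt{g}$ (for instance $r=\lceil\sqrt{g}\,\rceil$) and verify that both terms then stay below the upper bound $\approx \tfrac{4g}{3}+4$, a routine estimate. This bookkeeping is the only delicate point in the proof.

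Now fix an admissible pair $(r,d)$. Part (i) is immediate from Proposition \ref{but1}, whose sole hypothesis $d\le 2(\lceil 2g/3\rceil+2)$ is part of \eqref{eq3.2}. For (ii), the inequality $d\ge g+r+3\ge g+4$ together with the upper bound puts $d$ in the range of Proposition \ref{p47}, yielding $\mathcal{P}_0(2,d,4)\ne\emptyset$; and the two inequalities $d\ge g-r+\lfloor g/r\rfloor$ and $d\ge g+r+3$ are precisely the hypotheses of Proposition \ref{p3.2}, giving $\dim\mathcal{P}_0(2,d,4)<\beta(2,d,4)$.

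For (iii) I would combine (i) and (ii). Since $\mathcal{P}_0(2,d,4)$ is non-empty, so is $S_0(2,d,4)$; and every irreducible component of $S_0(2,d,4)$ has dimension at least $\beta(2,d,4)$, because $S_0(2,d,4)$ is open in $G_0(2,d,4)$ (Lemma \ref{genopen}) and every component of $G_0$ has dimension $\ge \beta$. Hence no component of $S_0(2,d,4)$ can be contained in $\mathcal{P}_0(2,d,4)$, and since $\mathcal{P}_0(2,d,4)$ is closed in $S_0(2,d,4)$ by Lemma \ref{pencil}, $T(2,d,4)=S_0(2,d,4)\setminus\mathcal{P}_0(2,d,4)$ is open and dense in $S_0(2,d,4)$. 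Specialising Theorem \ref{cort} to $n=m=2$ then yields Butler's Conjecture non-trivially for $(2,d,4)$. Finally, if $d\ge 2g-1$ then $d>2g-2=\tfrac{2\cdot 2\cdot 2}{2+2}(g-1)$, so Proposition \ref{dimcomp} applies: $T(2,d,4)$ is smooth of the expected dimension $\beta(2,d,4)$, and density in $S_0(2,d,4)$ then forces $\dim S_0(2,d,4)=\beta(2,d,4)$.
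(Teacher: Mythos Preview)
Your proposal is correct and follows essentially the same route as the paper: parts (i)--(iii) are deduced from Propositions \ref{but1}, \ref{p47}, \ref{p3.2} (with Lemma \ref{pencil} and Theorem \ref{cort} making the density argument explicit), and the final assertion from Proposition \ref{dimcomp}. The only noteworthy difference is in the arithmetic existence step: the paper simply observes that $r=1$ works for $g\le 8$ and $r=3$ works for all $g\ge 4$, which avoids your asymptotic estimate with $r\approx\sqrt{g}$ and is cleaner to verify.
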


\begin{proof}
For $r=1$, we can take $d=\max\{2g-1,g+4\}$ in \eqref{eq3.2} for $g\le8$. Similarly $r=3$ gives values of $d$ for $g\ge4$.

(i) follows from Proposition \ref{but1} and (ii) from Propositions \ref{p47} and \ref{p3.2}. (iii) now follows from (i) and (ii). The last assertion follows from Proposition \ref{dimcomp} and (iii).
\end{proof}

\begin{corollary}\label{cor511}
Let $X$ be a general curve of genus $g$, $3\le g\le5$. If
$$g=3, d=7,8,\quad g=4, 8\le d\le10,\quad g=5, 8\le d\le12,$$
then Butler's Conjecture holds non-trivially for $(2,d,4)$.
\end{corollary}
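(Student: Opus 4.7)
The plan is to handle each listed case either by direct appeal to Theorem \ref{cor3.2} or, for a single exceptional pair, by Corollary \ref{c45}. First I would check the upper bound appearing in \eqref{eq3.2}: evaluating $2(\lceil 2g/3\rceil+2)$ gives $8$, $10$, $12$ for $g=3,4,5$ respectively, matching exactly the stated upper limits on $d$ in each of the three ranges.

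Next I would try $r=1$ in the lower bound $\max\{g-r+\lfloor g/r\rfloor,\,g+r+3\}$. With $r=1$ this simplifies to $\max\{2g-1,\,g+4\}$, which equals $7$, $8$, $9$ for $g=3,4,5$ respectively. Hence the hypothesis of Theorem \ref{cor3.2} is satisfied, and conclusion (iii) of that theorem yields Butler's Conjecture non-trivially, for $g=3$ with $d\in\{7,8\}$, for $g=4$ with $d\in\{8,9,10\}$, and for $g=5$ with $d\in\{9,10,11,12\}$. This accounts for all of the listed pairs except $(g,d)=(5,8)$.

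The only remaining pair is $(g,d)=(5,8)$, and this is where the main (indeed the only) obstacle lies: for $g=5$ and $d=8$ no choice of $r\ge 1$ can make \eqref{eq3.2} hold, because already for $r=1$ the second term $g+r+3$ equals $9>8$, and this term is monotone increasing in $r$. Fortunately the pair lies exactly at $d=g+3$ with $g$ odd and $g\ge 5$, which is precisely the setting of Corollary \ref{c45}; applying that corollary immediately disposes of the case and completes the proof.
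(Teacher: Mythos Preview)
Your proposal is correct and follows exactly the same approach as the paper's proof, which simply reads: ``This follows from Theorem \ref{cor3.2} and (for $g=5$, $d=8$) Corollary \ref{c45}.'' You have merely spelled out the verification of \eqref{eq3.2} with $r=1$ explicitly and correctly identified $(g,d)=(5,8)$ as the sole exceptional case requiring Corollary \ref{c45}.
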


\begin{proof}
This follows from Theorem \ref{cor3.2} and (for $g=5$, $d=8$) Corollary \ref{c45}.
\end{proof}

In the next section, we will investigate the case $g=6$.

\section{Genus 6}\label{secg6}

Higher rank Brill-Noether theory for genus $6$ is particularly interesting as several new phenomena appear (see \cite{news1,ln6}). In our context, we have the following result.

\begin{theorem}\label{p3.3}
Let $X$ be a general curve of genus $6$.
\begin{enumerate}
\item[(i)] $G_0(2,d,4)\ne\emptyset$ if and only if $d\ge9$.
\item[(ii)] $\dim\mathcal{P}_0(2,9,4)=\beta(2,9,4)=1$, $\dim S_0(2,9,4)\ge2$; moreover, if $(E,V)\in S_0(2,9,4)$, then $\det E\simeq K_X(-p)$ for some $p\in X$ and ${\mathcal P}_0(2,9,4)$ does not contain any component of $S_0(2,9,4)$.
\item[(iii)] Butler's Conjecture holds non-trivially for $(2,9,4)$; moreover, the morphism $D:T(2,9,4)\to T(2,9,4)$
is the identity morphism.
\item[(iv)] $\dim\mathcal{P}_0(2,10,4)<\beta(2,10,4)$ and  Butler's Conjecture holds non-trivially for $(2,10,4)$.
\item[(v)]  If $d=11$ or $d=12$, then $\dim\mathcal{P}_0(2,d,4)<\beta(2,d,4)$, $T(2,d,4)$ is smooth of dimension $\beta(2,d,4)$ and  Butler's Conjecture holds non-trivially for $(2,d,4)$.
\item[(vi)]  If $d\ge13$, then $\dim\mathcal{P}_0(2,d,4)<\beta(2,d,4)$.
\item[(vii)] If $d\ge14$, then $S_0(2,d,4)\ne\emptyset$.
\end{enumerate}
\end{theorem}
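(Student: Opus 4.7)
For part (i), my approach handles $d\le 7$ via Theorem \ref{p3.1}(i) (applicable since $g=6$ is even, so $d\le g+1=7$), the case $d=8$ via Remark \ref{r43} combined with the observation that $\beta(1,8,5)=-4<0$ excludes degree-$8$ line bundles with five sections on a general genus-$6$ curve, and non-emptiness at $d=9$ is immediate from Theorem \ref{p3.1}(ii) ($g\ge4$ and $d\ge g+3$).

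For part (ii), I begin by applying Proposition \ref{but1} (valid since $9\le 12=2(\lceil 2g/3\rceil+2)$) to identify $T(2,9,4)=S_0(2,9,4)\setminus\mathcal{P}_0(2,9,4)$. Any $(E,V)\in T(2,9,4)$ admits no subpencil, so \cite[Lemma 3.9]{pr} forces $h^0(\det E)\ge 5$; since $\deg\det E=9=2g-3$, classical Brill--Noether pins down $\det E\simeq K_X(-p)$ for some $p\in X$ (the degree-$9$ line bundles with five sections on a general genus-$6$ curve being exactly $\{K_X(-p):p\in X\}$), and the same conclusion for $(E,V)\in\mathcal{P}_0$ will fall out of the extension analysis. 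To bound $\dim\mathcal{P}_0(2,9,4)$, any subpencil extension $0\to(L_1,V_1)\to(E,V)\to(L_2,V_2)\to 0$ has $d_1<d_2$ and $d_1+d_2=9$; Brill--Noether forces $d_1\ge 4$, leaving only $(d_1,d_2)=(4,5)$. Formulas \eqref{eqext}--\eqref{eqc21} yield $C_{21}=0$ and $\dim\Ext^1=h^0(L_1^*\otimes L_2^*\otimes K_X)$, the cohomology of a degree-one line bundle, non-zero precisely when $L_1\otimes L_2\simeq K_X(-p)$ for some $p\in X$, in which case $\dim\Ext^1=1$. Summing $\dim G(1,4,2)=0$, $\dim G(1,5,2)=2$, the one-parameter family of admissible pairs $(L_1,L_2)$ (parametrized by $p$ once $L_1\in G(1,4,2)$ is fixed), and the projectivized $0$-dimensional non-split extensions, yields $\dim\mathcal{P}_0(2,9,4)\le 1=\beta$, with equality and non-emptiness provided by the construction itself. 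For $\dim S_0(2,9,4)\ge 2$, I would construct an explicit two-dimensional family: fix $L\in G(1,4,2)$, form extensions $0\to L\to E\to K_X(-p)\otimes L^*\to 0$ as $p\in X$ and the extension class vary, and choose $V\subset H^0(E)$ of dimension $4$ generating $E$; verifying stability, generatedness, and the parameter count yields the required subscheme. The final assertion of (ii) follows from $1<2$.

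For (iii), Butler's Conjecture follows from Theorem \ref{cort} and (ii): $T(2,9,4)$ is open in $S_0$ and, since no component of $S_0$ lies in $\mathcal{P}_0$, dense. The striking claim that $D=\mathrm{id}$ on $T(2,9,4)$ I would prove by a symplectic construction. The surjection $V^*\otimes\mathcal{O}_X\twoheadrightarrow M_{V,E}^*$ induces, upon taking $\wedge^2$, a linear map
\[
\wedge^2 V^*\lra H^0(\wedge^2 M_{V,E}^*)=H^0(\det E)=H^0(K_X(-p)),
\]
whose source has dimension $6$ and target $5$, so a non-zero $\omega$ lies in the kernel. Viewed as a skew form on $V$, this $\omega$ restricts to zero on every fibre $M_{V,E,x}\subset V$; provided $\omega$ is non-degenerate --- as expected, the Pfaffian locus being a quadric hypersurface in $\mathbb{P}(\wedge^2 V^*)\cong\mathbb{P}^5$ --- each $M_{V,E,x}$ becomes Lagrangian in $(V,\omega)$, and the induced symplectic duality $V\simeq V^*$ globally identifies $E_x=V/M_{V,E,x}$ with $M_{V,E,x}^*$. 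This yields an isomorphism $E\simeq M_{V,E}^*$ carrying $V$ to $V^*$, i.e.\ $D(E,V)\simeq(E,V)$.

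For (iv)--(vii), the strategy is uniform: for each $d$ I enumerate the decompositions $d=d_1+d_2$ with $4\le d_1\le d_2$ (lower bound from Brill--Noether on $G(1,d_1,2)$) and bound $\dim\mathcal{P}_0$ via \eqref{eqext}--\eqref{eqc21}. For $d=10$ ($\beta=5$), the single case $(4,6)$ contributes dimension at most $4<5$; non-emptiness of $S_0(2,10,4)$ follows by bounding the non-generated locus in $G_0(2,10,4)$ by $\dim S_0(2,9,4)+2$ (elementary transformations of degree-$9$ rank-$2$ subsheaves at a point), strictly less than $\dim G_0(2,10,4)\ge 5$, and an analogous iteration yields $S_0(2,d,4)\ne\emptyset$ for all $10\le d\le 13$. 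For $d=11,12$, Proposition \ref{p3.2} applies with $r=1$ (since $\max\{2g-1,g+4\}=11$) to give $\dim\mathcal{P}_0<\beta$, and Proposition \ref{dimcomp} (with $d>10=(2nm/(n+m))(g-1)$) yields smoothness of $T(2,d,4)$ of dimension $\beta$. For $d\ge 13$, Proposition \ref{p3.2} again handles the dimension bound; for $d\ge 14=2g+2$, Theorem \ref{p3.1}(iv) gives non-emptiness of $S_0$. Butler's Conjecture in (iv) and (v) then follows from Theorem \ref{cort}. The main obstacles will be (a) showing $D=\mathrm{id}$ at every point of $T(2,9,4)$, not merely a generic one, which may require a closer look at the embedding $X\hookrightarrow\Grass(2,4)$ relative to the Lagrangian Grassmannian; and (b) the exceptional lower bound $\dim S_0(2,9,4)\ge 2>\beta=1$, a genus-$6$-specific phenomenon that requires an explicit construction rather than a generic dimension estimate.
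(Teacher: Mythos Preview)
Your handling of (i), (v), (vi), (vii) is fine and matches the paper (for $d=8$ you use Remark~\ref{r43} where the paper cites \cite[Proposition~5.10]{lnrk2}; both are valid). The serious gap is in (ii).

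\textbf{Part (ii), the bound $\dim S_0(2,9,4)\ge 2$.} Your proposed construction---extensions $0\to L\to E\to K_X(-p)\otimes L^{-1}\to 0$ with $L\in G(1,4,2)$---can only produce elements of $\mathcal{P}_0(2,9,4)$. Indeed, any semistable $E$ of rank $2$ and degree $9$ with $h^0(E)\ge 4$ has $h^0(E)=4$ exactly (otherwise $\gamma(E)=\tfrac32<2=\Cliff_2(X)$), so necessarily $V=H^0(E)\supset H^0(L)$ and $(E,V)$ carries the subpencil $(L,H^0(L))$. Since you have already shown $\dim\mathcal{P}_0(2,9,4)=1$, no such family is $2$-dimensional; in fact the coherent-system extension space has $\dim\Ext^1=1$, so ``the extension class varies'' is false to begin with. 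The paper's argument is genuinely different: it invokes Osserman's lower bound for fixed-determinant Brill--Noether loci \cite[Theorem~1.1]{o} to show that every non-empty fibre of $\det:S_0(2,9,4)\to B(1,9,5)\cong X$ has dimension $\ge 1$, and then uses the $\mathcal{P}_0$-construction (which supplies a point of $S_0$ over all but finitely many $p\in X$) to conclude $\dim S_0\ge 2$. This external input is essential---the inequality $\dim S_0>\beta$ is not accessible by a naive parameter count. Note also that ``$1<2$'' alone does not give the last clause of (ii): a priori $S_0$ could have a $1$-dimensional component inside $\mathcal{P}_0$. The paper rules this out by observing that $\mathcal{P}_0$ meets each determinant fibre in only finitely many points, whereas by Osserman every non-empty fibre has dimension $\ge 1$.

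\textbf{Part (iii).} Your symplectic form $\omega$ is essentially the paper's section of $F=\ker(\bigwedge^2 V\otimes\mathcal{O}_X\to\det E)$, and the idea is correct; but the non-degeneracy hypothesis is both unnecessary and unproven. Even for degenerate $\omega\ne 0$, the isotropic $2$-planes $M_{V,E,x}\subset V$ cannot all equal the fixed $2$-plane $\ker\omega^\flat$ (since $M_{V,E}$ has non-zero degree), so $\omega$ still induces a non-zero bundle map $M_{V,E}\to E^*$; as both are stable of the same slope, this is an isomorphism. The paper does exactly this, phrased as $h^0(M_{V,E}\otimes E)>0$ followed by the stability argument.

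\textbf{Part (iv).} Your non-emptiness argument for $S_0(2,10,4)$ via bounding the non-generated locus is not straightforward (the subsheaf of $E$ generated by $V$ need not give an $\alpha$-stable system, nor have degree exactly $9$). The paper instead gets $S_0(2,10,4)\ne\emptyset$ directly from Proposition~\ref{p47}, since $d=10=g+4$; together with $\dim\mathcal{P}_0<\beta$ and Proposition~\ref{but1} this yields Butler's Conjecture. For the dimension bound itself the paper also treats the exceptional pairs $(L_1,L_2)$ with $L_2\simeq L_1^*\otimes K_X$ separately via \cite[Corollary~3.7]{bgmn}, which your sketch glosses over.
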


\begin{proof} (i) For $d\le7$, $G_0(2,d,4)=\emptyset$ by Theorem \ref{p3.1}(i). For $d=8$, note that, if $(E,V)\in G_0(2,8,4)$, then $E$ computes the Clifford index $\Cliff_2(X)$. Now, by \cite[Proposition 5.10]{lnrk2}, $(E,V)$ cannot be $\alpha$-stable. For $d\ge9$, $G_0(2,d,4)\ne\emptyset$ by Theorem \ref{p3.1}(ii).

(ii) If $(E,V)\in\mathcal{P}_0(2,9,4)$, then we have a non-trivial exact sequence \eqref{eqp1} in which $(L_1,V_1)\in G(1,4,2)$ and $(L_2,V_2)$ is a generated element of $G(1,5,2)$. Now consider \cite[(8)]{bgmn}. We have $C_{21}=d-g-3=0$ and $\Hom((L_2,V_2),(L_1,V_1))=0$; so
$$\dim\Ext^1((L_2,V_2),(L_1,V_1))=h^0(L_1^*\otimes L_2^*\otimes K_X).$$
Now $h^0(L_1^*\otimes L_2^*\otimes K_X)=0$ unless $L_2\simeq L_1^*\otimes K_X(-p)$ for some $p\in X$, in which case $h^0(L_1^*\otimes L_2^*\otimes K_X)=1$. Since $G(1,4,2)$ is finite by classical Brill-Noether theory, this shows that $\dim\mathcal{P}_0(2,9,4)\le1$. Now observe that $(L_2,V_2)$ is generated for all but finitely many $p$. In fact, if $(L_2,V_2)$ is not generated, then $L_2\simeq L(q)$ for some $L\in B(1,4,2)$ and some $q\in X$, and $L_1^*\otimes L^*\otimes K_X\simeq\mathcal{O}_X(p,q)$. Since there exist finitely many choices for $L_1$, $L$ and a unique choice of $\{p,q\}$ for each such choice, this justifies our assertion. This completes the proof that $\dim\mathcal{P}_0(2,9,4)=1$ and shows also that, if $(E,V)\in\mathcal{P}_0(2,9,4)$, then $\det E\simeq K_X(-p)$.

Now suppose $(E,V)\in S_0(2,9,4)\setminus\mathcal{P}_0(2,9,4)$. Then, by \cite[Lemma 3.9]{pr}, $h^0(\det E)\ge5$ and it follows that $\det E\simeq K_X(-p)$ for some $p\in X$. Now consider the morphism
$$S_0(2,9,4)\lra B(1,9,5): E\mapsto\det E.$$
By \cite[Theorem 1.1]{o}, every component of every fibre of this morphism has dimension
$\ge 18-3-4(4-9+10)+6=1$. Since, by the previous argument, all but finitely many of these fibres are non-empty,
it follows that $\dim S_0(2,9,4)\ge2$ as asserted. Moreover, each fibre contains finitely many points of ${\mathcal P}_0(2,9,4)$, so ${\mathcal P}_0(2,9,4)$ does not contain any component of $S_0(2,9,4)$.

(iii) It follows from Proposition \ref{but1} that $T(2,9,4)=S_0(2,9,4)\setminus \mathcal{P}_0(2,9,4)$ and then from (ii) that Butler's Conjecture holds non-trivially.

Now let $(E,V)\in T(2,9,4)$. We have already observed in (ii) that $h^0(\det E)\ge5$; moreover $h^0(\det E)\le5$ by classical Brill-Noether theory. We proceed as in the proof of \cite[Proposition 2.4]{gmn}. We have exact sequences
$$0\lra F\lra\bigwedge^2V\otimes\mathcal{O}_X\lra\det E\lra0$$
and
$$0\lra \bigwedge^2M_{V,E}\lra F\lra M_{V,E}\otimes E\lra0.$$
We deduce first that $h^0(F)>0$ and then that $h^0(M_{V,E}\otimes E)>0$. Since $E$ and $M_{V,E}^*$ are both stable of the same slope, it follows that $M_{V,E}^*\simeq E$. Finally, since $\Cliff_2(X)=2$, $V=H^0(E)$, so $D(E,V)\simeq (E,V)$.

(iv) Proposition \ref{p3.2} does not apply, but we can still use the argument in the proof of this proposition to show that $\dim\mathcal{P}_0(2,10,4)<\beta(2,10,4)$. In fact, for $(E,V)\in {\mathcal P}_0(2,10,4)$, we have a sequence \eqref{eqp1} with $(L_1,V_1)\in G(1,4,2)$ and $(L_2,V_2)\in G(1,6,2)$. In general, $h^0(L_1^*\otimes L_2^*\otimes K_X)=0$ and \eqref{eqp3} holds. The only case in which this does not occur is when $L_2\simeq L_1^*\otimes K_X$; according to \cite[Corollary 3.7]{bgmn}, we therefore need to prove that
$$d-g-3+\beta(1,6,2)>1.$$
This is clear. Now Butler's Conjecture holds by Propositions \ref{but1} and \ref{p47}.

(v) This follows from Theorem \ref{cor3.2}.

(vi) This follows by taking $r=1$ in \eqref{eqp2}.

(vii) follows from Theorem \ref{p3.1}(iv).
\end{proof}

We now consider coherent systems of type $(2,d,n+2)$ with $n\ge3$. The next theorem is a reformulation of some of the results in \cite{news1} and \cite{ln6} in terms of coherent systems.

\begin{proposition}\label{newst26} Let $X$ be a general curve of genus $6$.
\begin{enumerate}
\item[(i)] If $n\ge3$, then $G_0(2,d,n+2)=\emptyset$ and $G_0(n,d,n+2)=\emptyset$ for $d\le9$.
\item[(ii)] If $n\ge4$, then $G_0(2,10,n+2)=\emptyset$ and $G_0(n,10,n+2)=\emptyset$.
\item[(iii)] $G_0 (2,10,5)=S_0(2,10,5)=\mathcal{P}_0(2,10,5)$ and consists of a single point $(E,V)$ with $E$ stable. Moreover, Butler's Conjecture holds non-trivially for $(2,10,5)$.
\item[(iv)] $G_0(2,d,5)\ne\emptyset$ for $d\ge11$; $S_0(2,d,5)\ne\emptyset$ for $d\ge15$.
\item[(v)] $G_0(3,d,5)\ne\emptyset$ for $d\ge11$.
\end{enumerate}
\end{proposition}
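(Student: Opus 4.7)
The proof consists almost entirely of translating results from \cite{news1} and \cite{ln6} on Brill--Noether loci of a general genus $6$ curve into the language of coherent systems, and then invoking the general machinery of Sections \ref{seccs}--\ref{secug}. Overall, my plan is to dispose of the rank-$2$ emptiness statements via the known value $\Cliff_2(X)=2$, to cite the relevant vanishing results for higher rank $B(n,d,n+2)$, and to read off the two non-emptiness statements in (iv) and (v) from the existence results used earlier in the paper.

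For (i) and (ii), the rank-$2$ parts work as in the proof of Theorem \ref{p3.3}(i): any semistable $E$ of rank $2$ with $h^0(E)\ge 4$ and $d_E\le 2(g-1)=10$ has $\gamma(E)\ge\Cliff_2(X)=2$, so $h^0(E)\le d_E/2$. This rules out $h^0(E)\ge n+2\ge 5$ for $d\le 9$, and $h^0(E)\ge n+2\ge 6$ when $d=10$ and $n\ge 4$. The rank-$n$ statements $G_0(n,d,n+2)=\emptyset$ are reformulations of the corresponding $B(n,d,n+2)=\emptyset$ results in \cite{news1,ln6}; strictly semistable bundles are handled exactly as in the proof of Theorem \ref{mercat}(i), by pulling back a destabilizing subbundle of maximal slope to a destabilizing subsystem.

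For (iii), the key input from \cite{news1,ln6} is that $B(2,10,5)$ consists of a single point, corresponding to a stable bundle $E$ which is generated and admits a subpencil. This immediately gives $G_0(2,10,5)=S_0(2,10,5)=\mathcal{P}_0(2,10,5)=\{(E,V)\}$ with $E$ stable. For Butler's Conjecture note that Proposition \ref{coropencil} is not applicable, since $m=3>2=n$, so there is no obstruction from the subpencil. Since $S_0(2,10,5)$ is a point, Theorem \ref{cort} reduces the conjecture to the statement $T(2,10,5)\ne\emptyset$ together with density of $T(3,10,5)$ in $S_0(3,10,5)$. The heart of the argument is to check that $M^*_{V,E}$ is semistable and $\alpha$-stable for small $\alpha>0$: using the diagram technique of Proposition \ref{but1}, any line-bundle quotient $L$ of $M^*_{V,E}$ is generated by a subspace $W\subset V^*$ of dimension $\ge 2$, so by classical Brill-Noether $d_L\ge 4>10/3$; rank-$2$ quotients $Q$ are generated by $\dim W\ge 3$ sections, forcing $d_Q\ge 6$, and the borderline case $d_Q=6$ (which corresponds to $\beta(2,6,3)=0$) is eliminated using the explicit description of $(E,V)$ furnished by the cited papers.

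For (iv) and (v), non-emptiness of $G_0(2,d,5)$ and $G_0(3,d,5)$ for $d\ge 11$ follows from general existence results for $\alpha$-stable coherent systems (as in the sources used for Theorem \ref{p3.1}(ii), in particular \cite{T2}), while $S_0(2,d,5)\ne\emptyset$ for $d\ge 15$ is immediate from Theorem \ref{props0}(ii), since $ng+m=2\cdot 6+3=15$. The main obstacle in the whole proof is the verification in (iii) that the unique $(E,V)\in S_0(2,10,5)$ has semistable dual span: everything else is either a citation or a direct slope-counting argument, whereas here one genuinely needs the concrete description of $E$ from \cite{news1,ln6} to rule out the borderline rank-$2$ quotient of $M^*_{V,E}$.
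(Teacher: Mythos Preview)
Your overall strategy---use $\Cliff_2(X)=2$ for the rank-$2$ emptiness, cite \cite{news1,ln6} for the remaining emptiness and existence statements, and invoke Theorem~\ref{props0} for $S_0(2,d,5)$---matches the paper. However, there are two genuine problems.

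First, your citations in (iv) and (v) are wrong: \cite{T2} is specifically about coherent systems of type $(2,d,4)$ and says nothing about $(2,d,5)$ or $(3,d,5)$. The paper instead uses \cite[Proposition~7.2]{ln6} for $B(2,d,5)\ne\emptyset$ when $d\ge11$, and \cite[Proposition~7.4 and Remark~7.5]{ln6} for $G_0(3,d,5)\ne\emptyset$. More importantly, the latter reference does \emph{not} cover $d=12$, and here your proposal has a real gap: the paper needs an explicit ad hoc construction. One takes three non-isomorphic $L_i\in B(1,4,2)$, sets $E=L_1\oplus L_2\oplus L_3$ (strictly semistable with $h^0(E)=6$), and chooses $V\subset H^0(E)$ of dimension $5$ with $\dim(V\cap H^0(L_i))=1$ and $\dim(V\cap H^0(L_i\oplus L_j))=3$; one then checks $(E,V)\in G_0(3,12,5)$. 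No general existence theorem in the cited literature produces this example.

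Second, in (iii) you sketch a direct argument for the semistability of $M^*_{V,E}$, whereas the paper simply cites \cite[Theorem~4.1 and Proposition~4.4]{news1} for everything. Your sketch is plausible for the $T(2,10,5)$ side, but you explicitly note that Butler's Conjecture also requires $T(3,10,5)$ dense in $S_0(3,10,5)$ and then never address it; you would need to know, for instance, that $S_0(3,10,5)$ is also a single point (this is indeed contained in the proof of \cite[Proposition~4.4]{news1}, cf.\ Remark~\ref{rem3.4}). A smaller point: in (i)--(ii) your appeal to ``the proof of Theorem~\ref{mercat}(i)'' for strictly semistable $E$ does not transfer, since that argument relies on $d>2ng-n$ to control $h^0$ via Riemann--Roch; the correct route is that the cited bounds in \cite{ln6} and \cite{mer} already hold for all semistable bundles.
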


\begin{proof} (i) and (ii) The results for $(2,d,n+2)$ follow immediately from the fact that $\Cliff_2(X)=2$. For $(n,d,n+2)$, this doesn't work since the corresponding bundles do not contribute to $\Cliff_n(X)$. However, see \cite[Figure 5]{ln6} for $n\le5$ and \cite{mer} for $n\ge6$.

(iii) This follows from \cite[Theorem 4.1 and Proposition 4.4]{news1}.

(iv) By \cite[Proposition 7.2]{ln6}, we have $B(2,d,5)\ne\emptyset$ for $d\ge11$; hence also $G_0(2,d,5)\ne\emptyset$. If $d\ge15$, then $S_0(2,d,5)\ne\emptyset$ by Theorem \ref{props0}.

(v) This follows from \cite[Proposition 7.4 and Remark 7.5]{ln6} except when $d=12$. For this case, choose three non-isomorphic line bundles $L_1$, $L_2$, $L_3$ in $B(1,4,2)$ and define $E:=L_1\oplus L_2\oplus L_3$. Now $h^0(E)=6$; choose a subspace $V$ of $H^0(E)$ of dimension $5$ such that $\dim(V\cap H^0(L_i))=1$ for all $i$ and $\dim(V\cap H^0(L_i\oplus L_j))=3$ for all $i\ne j$. Then $(E,V)\in G_0(3,12,5)$.
\end{proof}

\begin{remark}\label{rem3.4}\begin{em}
Note that, in this proposition, ${\mathcal P}_0(2,10,5)\ne\emptyset$, but ${\mathcal P}_0(3,10,5)=\emptyset$ (for confirmation of this, see the proof of \cite[Proposition 4.4]{news1}). This is compatible with Proposition \ref{coropencil}.
\end{em}\end{remark}


\end{document}